\documentclass[11pt]{article}
\usepackage[T1]{fontenc}
\usepackage{lmodern}
\usepackage[margin=1in]{geometry}
\usepackage{amsmath,amssymb,amsthm,mathtools}
\usepackage{microtype,enumitem,booktabs,longtable,array,placeins}
\usepackage{tikz}
\usetikzlibrary{positioning,arrows.meta,calc}
\usepackage[hidelinks,bookmarksnumbered]{hyperref}
\hypersetup{pdftitle={Watkins's conjecture holds for all infinite groups},
 pdfauthor={Alex J Sutherland},pdfsubject={Watkins's graphical regular representation conjecture, Cayley index, and hierarchy reconstruction},
 pdfkeywords={Watkins conjecture, Cayley index, graphical regular representation, infinite group, hierarchy reconstruction}}
\usepackage{fancyhdr}
\newtheorem{maintheorem}{Theorem}

\newtheorem{theorem}{Theorem}[section]
\newtheorem{lemma}[theorem]{Lemma}
\newtheorem{proposition}[theorem]{Proposition}
\newtheorem{corollary}[theorem]{Corollary}
\theoremstyle{definition}\newtheorem{definition}[theorem]{Definition}
\theoremstyle{remark}\newtheorem{remark}[theorem]{Remark}
\newcommand{\Cay}{\operatorname{Cay}}
\newcommand{\Aut}{\operatorname{Aut}}
\newcommand{\Sym}{\operatorname{Sym}}
\newcommand{\CN}{\operatorname{CN}}
\DeclareMathOperator{\maxdeg}{maxdeg}
\DeclareMathOperator{\Diff}{Diff}

\newcommand{\F}{\mathbb F}
\newcommand{\N}{\mathbb N}

\newcommand{\eps}{\varepsilon}
\newcommand{\invp}[1]{[#1]_{\pm}}
\newcommand{\cl}[1]{\langle #1\rangle}

\title{Watkins's conjecture holds for all infinite groups}
\author{Alex J Sutherland\thanks{\raggedright Oregon State University, Corvallis, OR 97331, USA.
Email: \href{mailto:suthalex@oregonstate.edu}{\texttt{suthalex@oregonstate.edu}}.
ORCID: \mbox{\href{https://orcid.org/0000-0002-1364-5663}{0000-0002-1364-5663}}.}}
\date{}
\newcommand{\diam}{\operatorname{diam}}

\newcommand{\cf}{\operatorname{cf}}
\begin{document}
\maketitle
\begin{abstract}
We prove that at every infinite cardinality, every group which is neither abelian of exponent greater than two nor generalized dicyclic admits a graphical regular representation, settling the infinite-group part of Watkins's conjecture. We also determine the Cayley index of every infinite group: it is $1$, $2$, or $8$, according to its algebraic type, and in every case the index is attained by a connected Cayley graph. For every infinite group $G$ of cardinality $\kappa$, we construct $2^\kappa$ pairwise nonisomorphic Cayley graphs with exactly the unavoidable inverse-pair symmetries, diameter two, and $\kappa$ common neighbors at every distinct pair. The principal tool recovers a continuous ordinal hierarchy from an alternating adjacency baseline with bounded-degree errors: robust finite patterns identify the initial classes, successive twin quotients recover the layers, and their finite exception packets determine the translation action. The reconstruction applies without a group action and is stable under additional layerwise bounded-degree edits. Further results give closed Cantor-cube families with prescribed finite data in the regular cases and optimal three-valued shortest-path metrics.
\end{abstract}
\noindent\small\textbf{2020 Mathematics Subject Classification.} Primary 05C25; Secondary 05C63, 20B07, 20B27.\par
\noindent\small\textbf{Keywords.} Cayley graph; graphical regular representation; infinite group; subgroup filtration; intrinsic reconstruction; invariant metric.\normalsize

\section{Watkins's conjecture and the Cayley index}
A \emph{graphical regular representation} (GRR) of a group is a simple undirected
Cayley graph whose full automorphism group consists of the prescribed translations.
A \emph{connection set} in $G$ means an inverse-closed subset of $G\setminus\{1\}$;
it is not required to generate $G$ unless this is stated. Its Cayley graph is
connected exactly when it generates $G$.
We settle the existence question for infinite groups, without any restriction on
cardinality.

\begin{maintheorem}[Watkins's conjecture for infinite groups]\label{thm:watkins}
Every infinite group that is neither abelian of exponent greater than two nor
generalized dicyclic admits a graphical regular representation.
\end{maintheorem}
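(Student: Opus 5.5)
The plan is to build, for an infinite group $G$ of cardinality $\kappa$, a connection set $S$ by transfinite recursion along a continuous subgroup filtration, and then to show that $\Aut(\Cay(G,S))$ is exactly the group of left translations.

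\textbf{Step 1 (the filtration).} Fix a continuous increasing chain of subgroups $(G_\alpha)_{\alpha<\cf\kappa}$ (or of length $\kappa$) with $|G_\alpha|<\kappa$ when $\kappa$ is uncountable. When $\kappa=\aleph_0$, use instead an increasing chain of finitely generated, hence countable, subgroups, or of finite sets. The union of the chain is $G$, and the successive layers are $L_\alpha=G_{\alpha+1}\setminus G_\alpha$.

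\textbf{Step 2 (the baseline graph).} Take an \emph{alternating adjacency baseline}: the membership of $g$ in $S$ is decided by the parity of the layer index of $g$, so that two layers are joined or not according to the parities of their indices. We then add to this baseline bounded-degree errors, namely finite "exception packets" of group elements placed in each layer. The packets are chosen so that:
\begin{enumerate}[label=(\roman*)]
\item initial robust finite patterns identify $G_0$;
\item successive twin quotients recover the layers;
\item each packet is asymmetric enough to pin down the translation action within its layer.
\end{enumerate}
The cardinality is large enough to choose these packets generically. At each stage only fewer than $\kappa$ constraints are active, so a fresh element avoiding every unwanted coincidence always exists.

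\textbf{Step 3 (reconstruction).} Show that any automorphism $\varphi$ fixing $1$ preserves the hierarchy. Vertices in distinct layers are distinguished by their common-neighbourhood structure: the baseline makes the twin classes coincide with the layers up to bounded-degree noise, and this noise can be removed by a counting argument. Hence $\varphi$ stabilises each $G_\alpha$. The exception packets then force $\varphi$ to agree with a translation on each layer, and continuity of the chain handles limit stages.

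\textbf{Step 4 (killing the residual symmetry).} What remains is the usual residual automorphism: inversion, or more generally a map of the form $g\mapsto g^{-1}$ twisted by a central element. This is the only symmetry a Cayley graph cannot avoid structurally, and it is precisely what the two excluded classes fail to break. An inverse-closed $S$ is invariant under inversion, so $x\mapsto x^{-1}$ is an automorphism of $\Cay(G,S)$ exactly when it is a group automorphism, that is, when $G$ is abelian. If $G$ is abelian of exponent two, inversion is the identity and causes no problem. For nonabelian groups that are not generalized dicyclic, we pick $S$ so that no map of the form $x\mapsto a x^{\sigma}$ with $\sigma\in\Aut(G)$ preserves $S$. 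To do this we use a nonabelian witness pair inserted into an early exception packet, together with the classical fact that the map $s\mapsto s^{-1}$ on $S$ extends to a group automorphism only in the abelian and generalized dicyclic cases.

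\textbf{Main obstacle.} I expect the hardest part to be Step 3 at limit ordinals and in the countable case. The difficulty is to guarantee that bounded-degree errors never accumulate so as to merge two twin classes or to create a spurious one. I would first try to enforce a quantitative gap: common-neighbour counts equal $\kappa$ within the intended twin relation, and are at most finite outside it. This gap is what would make the reconstruction purely local and stable under the later edits.
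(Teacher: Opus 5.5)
The central gap is that your plan never produces a rigid base. Steps 2--3 assume the bottom piece $G_0$ is already rigid, but nothing constructs a GRR on it. Countable $G$ is not treated at all: for finitely generated $G$ your chain reaches $G$ after finitely many steps, so everything rests on that unconstructed base.

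The packets cannot supply the rigidity themselves. A finite set inside an infinite layer constrains an automorphism $F$ only once exact equivariance $F(hx)=hF(x)$ for $h\in H$ is known. The paper obtains this because every recovered $H$-coset carries a copy of a GRR $\Cay(H,U)$. Only then does a packet $W_\alpha\ni a_\alpha$ of at most $76$ points force $F(a_\alpha)=a_\alpha$ (Lemma~\ref{lem:packet} via Lemma~\ref{lem:sandwich}, which needs $H$ infinite and nonabelian, or Lemma~\ref{lem:booleanpacket}). The pinned elements must also generate $G$. That is why each successor stage adjoins a single $a_\alpha$ and the chain has length $\kappa$: with length $\cf\kappa<\kappa$, boundedly many pinned elements per layer generate a subgroup of size less than $\kappa$.

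The core comes from Theorem~\ref{thm:countable}, a quite different mechanism. Primary inverse pairs get pairwise distinct finite codegrees and secondary pairs get infinite codegrees, so $\Aut(\Cay(H,U))=\Xi(H)$ by Lemma~\ref{lem:recognition}. Fresh elements come from the Banakh--Protasov avoidance of Lemma~\ref{lem:BP}, not from counting: in a countable group you must avoid finitely many quadratic equations, and their solution sets can be infinite. Robust markers are added without disturbing frozen counts. Finally, $H$ is chosen via Lemma~\ref{lem:witness} to lie in a regular case.

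Step 4 does not fit Step 3: if the packets forced translations, nothing would remain, and if not, you have not identified what remains. It also rests on false claims. Inversion is an automorphism of $\Cay(G,S)$ for every conjugation-invariant symmetric $S$ in any group, for instance $S=G\setminus\{1\}$. In $D_\infty=\langle r,f\rangle$, a regular-case group, conjugation by $f$ is a group automorphism that preserves $S=\{r,r^{-1},f\}$ and acts on it as $s\mapsto s^{-1}$.

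More fundamentally, the unavoidable residue is $\Xi(G)$, and its rooted elements need not be affine. In $Q_8$ the transposition of $i$ and $-i$ lies in $\Xi(Q_8)$ but is not a homomorphism. So excluding maps $x\mapsto a\sigma(x)$ proves nothing. What closes the argument is the Leemann--de la Salle theorem (Theorem~\ref{thm:inversepairindex}) that $\Xi(G)=R(G)$ exactly in the regular cases, applied to $H$, and to $G$ itself when $G$ is countable.

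The codegree gap you hope for in your last paragraph cannot exist under an alternating baseline. Each proper stage acquires common neighbors from all later bit-one layers, so in the paper every distinct pair has $\kappa$ common neighbors (Theorem~\ref{thm:cube}). The paper instead recovers $E_0$ as the relation $\sim_m$ given by finite induced $m$-robust subgraphs. It then reads the unperturbed baseline between initial classes by the test ``at most $m$ neighbors versus at most $m$ nonneighbors'', which needs classes of size $>2m$. Twins are taken in that recovered baseline quotient, not in $\Gamma$.
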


A group is \emph{Boolean} if every element has order at most two. A group is \emph{abelian of exponent greater than two} if it is abelian and has an element of order greater than two; equivalently, it is abelian and not Boolean, the trivial group counting as Boolean. A
\emph{generalized dicyclic} group is nonabelian, has an abelian subgroup of
index two, and has an element of order exactly four outside that subgroup which
acts on it by inversion. These two families are unavoidable: each such group has a nontrivial automorphism sending every element to itself or its inverse, and such an automorphism fixes every connection set (see the remark after Theorem~\ref{thm:regularExist}). The \emph{regular cases} are the Boolean groups and the
nonabelian groups that are not generalized dicyclic. The \emph{Cayley index}
$i(G)$ is the infimum, over symmetric generating sets, of the index of the
prescribed regular subgroup in the full graph automorphism group.

\begin{maintheorem}[The Cayley index of every infinite group]\label{thm:cayleyindex}
For every infinite group $G$,
\[
i(G)=
\begin{cases}
1, & G\text{ is in a regular case};\\[2pt]
2, & \begin{array}[t]{@{}l@{}}
G\text{ is non-Boolean abelian, or is generalized dicyclic}\\
\text{and not isomorphic to }Q_8\times B\text{ for any Boolean group }B;
\end{array}\\[2pt]
8, & G\cong Q_8\times B\text{ for an infinite Boolean group }B.
\end{cases}
\]
In every case the index is attained by a connected Cayley graph.
\end{maintheorem}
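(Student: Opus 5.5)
The plan is to prove, in each of the three cases, a lower bound coming from unavoidable automorphisms and a matching upper bound from an explicit connected Cayley graph. The paper's main construction supplies the upper bounds: for every infinite $G$ of cardinality $\kappa$ it gives connected Cayley graphs $\Cay(G,S)$ whose automorphism group is exactly the translations together with the ``unavoidable inverse-pair symmetries.''

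\textbf{Regular cases.} Here Theorem~\ref{thm:watkins} already gives a GRR, so $i(G)=1$. I will check that the connection set can be taken generating, for instance by adjoining all of $G$ up to a small structured reserve, or by noting that diameter two forces connectedness. Connectedness is exactly the point to verify.

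\textbf{Lower bounds in the other cases.} If $G$ is non-Boolean abelian, inversion $g\mapsto g^{-1}$ is a graph automorphism of every $\Cay(G,S)$. It fixes $1$ and is nontrivial because some element has order greater than two. Hence the stabilizer of $1$ is nontrivial and $i(G)\ge 2$. If $G$ is generalized dicyclic with abelian subgroup $A$, the map fixing $A$ pointwise and inverting $G\setminus A$ is a group automorphism fixing every $S$, since it sends each element to itself or its inverse. This again gives $i(G)\ge2$. For $G\cong Q_8\times B$ I need index $8$. The stabilizer of $1$ contains every permutation $\phi$ with $\phi(g)\in\{g,g^{-1}\}$ that is an automorphism of all Cayley graphs. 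I will exhibit the group of such maps: for $Q_8\times B$, the automorphisms acting on the $Q_8$ factor by inner-like sign changes on $i,j,k$ independently, which is $(\mathbb Z/2)^2$. Together with the inverting maps of the three index-two abelian subgroups $\langle i\rangle\times B$, $\langle j\rangle\times B$, $\langle k\rangle\times B$, these should generate a group of order $8$ that preserves every inverse-closed set. The cleanest route is to identify these maps with left-right multiplication symmetries $x\mapsto axb$ that satisfy $x\mapsto x^{\pm1}$ on each coset. The lower bound $8$ then follows, and I expect it to be a known finite computation on $Q_8$ that lifts because $B$ is central of exponent $2$.

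\textbf{Upper bounds.} For non-Boolean abelian groups and for generalized dicyclic groups not of the form $Q_8\times B$, I will use the paper's construction with ``exactly the unavoidable inverse-pair symmetries.'' I must show that the group of unavoidable symmetries, meaning the maps preserving every connection set and normalizing the translations, has order $2$ at the identity. For abelian groups this means showing that inversion is the only nontrivial pointwise-$\pm$ map. For dicyclic groups I must show that the only nontrivial one is the $A$-fixing, coset-inverting map. This second claim fails precisely when $G\cong Q_8\times B$, which is the known characterization from the finite theory (Nowitz–Watkins; the $Q_8\times\mathbb Z_2^n$ exceptions). For $Q_8\times B$ the same construction yields stabilizer exactly of order $8$.

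\textbf{Main obstacle.} The hardest step is the group-theoretic classification of the maps $\phi$ with $\phi(g)\in\{g,g^{-1}\}$ that are automorphisms of every Cayley graph. I must show this group has order exactly $1$, $2$, or $8$ in the respective cases. My first attempt will be to show that any such $\phi$ is a group automorphism or anti-automorphism composed with translation-compatible pieces. I will then reduce to finite subgroups generated by two elements, where the structure of groups in which every element is fixed or inverted by a common automorphism is classical: abelian, generalized dicyclic, or $Q_8\times B$ with the larger symmetry group.
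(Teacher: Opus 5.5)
Your outline has the same shape as the paper's proof. The paper's proof is a two-line deduction. Every element of $\Xi(G)$ preserves every inverse-closed connection set, so \eqref{eq:sandwich} gives $i(G)\ge[\Xi(G):R(G)]$ for every Cayley graph. Theorem~\ref{thm:all} then supplies a diameter-two, hence connected, Cayley graph with $\Aut=\Xi(G)$, so $i(G)=[\Xi(G):R(G)]$ with a connected witness. Your explicit lower-bound maps are special cases of this sandwich, and diameter two settles your connectivity worry in the regular case.

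The genuine gap is the one step that produces the values $1,2,8$: the order of the stabilizer $\Xi(G)_1$ of $1$. You leave this as a plan (your ``main obstacle''), and the plan does not work. The paper does not prove this step either. It imports it as Theorem~\ref{thm:inversepairindex}, the Leemann--de la Salle inverse-pair classification, which holds for arbitrary infinite groups because its nonabelian input does not assume finite generation. Without that citation, your argument proves only $i(G)=[\Xi(G):R(G)]$.

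Here is where the plan fails. Rooted elements of $\Xi(G)$ need not be group automorphisms, and they need not normalize $R(G)$; a rooted permutation that normalizes $R(G)$ \emph{is} an automorphism. So your working definition of the unavoidable group, and any classification of automorphisms fixing or inverting every element, sees only part of $\Xi(G)_1$.

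For $G=Q_8\times B$ this already breaks your lower bound. The map fixing $\langle i\rangle\times B$ pointwise and inverting its complement is conjugation by $i$. So your ``inner-like sign changes'' and your three inverting maps are the same three nontrivial inner automorphisms. The rooted maps $x\mapsto axb$ are again conjugations. Together they give a group of order $4$, not $8$. The other four elements are the anti-automorphism $x\mapsto x^{-1}$ and its products with inner automorphisms. Inversion lies in $\Xi(Q_8\times B)$: $x^{-1}y$ and $xy^{-1}$ agree modulo the central subgroup $\{\pm1\}$, and $-g=g^{-1}$ for every $g$ of order four, so $x^{-1}y\in\invp{xy^{-1}}$.

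For the upper bounds $|\Xi(G)_1|\le2$ and $|\Xi(G)_1|\le8$, passing to two-generated subgroups (which need not be finite) shows only that each restriction lies in that subgroup's own rooted $\Xi$. That group can be strictly larger than the global stabilizer. For example, $Q_8$ has eight rooted inverse-pair symmetries, yet it sits inside the regular-case group $Q_8\times\mathbb Z$, whose stabilizer is trivial. It also sits inside the generalized dicyclic group $Q_{16}\times B$ with $B$ infinite Boolean, whose stabilizer has order two. Showing that the local sign choices glue together only in the stated ways is exactly the content of the cited classification. The finite theory you invoke does not supply this for infinite groups.
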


Here $Q_8$ is the quaternion group of order eight. To state the stronger
realization result, write $\invp{g}=\{g,g^{-1}\}$ and let
\[
\Xi(G)=\{F\in\Sym(G):\invp{F(x)F(y)^{-1}}=\invp{xy^{-1}}
                         \text{ for all }x,y\in G\}.
\]
This is the group of permutations preserving every inverse-pair difference
relation, and is contained in the automorphism group of every undirected
Cayley graph on $G$. A \emph{realization} of $\Xi(G)$ is a Cayley graph
$\Gamma=\Cay(G,S)$ with $\Aut(\Gamma)=\Xi(G)$; we also call $\Gamma$ a
\emph{realizing graph} and $S$ a \emph{realizing connection set}.

For a graph $\Gamma$, let $\CN_\Gamma(x,y)$ denote the number of common neighbors of $x$ and $y$.

\begin{maintheorem}[Exact realization, abundance, and optimal diameter]\label{thm:all}
For every infinite group $G$ of cardinality $\kappa$, there are $2^\kappa$
pairwise nonisomorphic Cayley graphs $\Gamma$ on the original set $G$ such that
\[
\Aut(\Gamma)=\Xi(G),\qquad
\CN_\Gamma(x,y)=\kappa\quad\text{for all }x\ne y,\qquad \diam\Gamma=2.
\]
In the regular cases these graphs are GRRs. The number of uncolored
isomorphism types and the diameter are optimal.
\end{maintheorem}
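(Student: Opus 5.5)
The plan is to build a connection set $S$ by transfinite recursion along a continuous ordinal hierarchy of $G$, and to read $\Aut(\Cay(G,S))$ back off the graph through the hierarchy-reconstruction principle sketched in the abstract. Fix $\kappa=|G|$ and write $G=\bigcup_{\alpha<\cf\kappa}G_\alpha$ as a continuous increasing union of subgroups with $|G_\alpha|<\kappa$ (or a union of countable subgroups when $\kappa=\aleph_0$). This gives a layer decomposition $L_\alpha=G_{\alpha+1}\setminus G_\alpha$.

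On this decomposition we take an \emph{alternating adjacency baseline}: $x\sim y$ according to the parity of the layer of $xy^{-1}$. This depends only on $\invp{xy^{-1}}$ and is inverse-closed, since $G_\alpha$ is a subgroup. We then modify the baseline by bounded-degree errors, namely finite marker patterns attached near the identity and finite exception packets in each layer. The resulting set $S$ is still a union of inverse pairs, so $\Xi(G)\le\Aut(\Gamma)$ automatically.

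Diameter two and $\CN_\Gamma(x,y)=\kappa$ follow from the baseline. For $x\ne y$, the common neighbours $z$ are governed by the layers of $xz^{-1}$ and $yz^{-1}$. Choosing $z$ in a layer above both $x$ and $y$, of the appropriate parity, yields $\kappa$ many common neighbours, because the high layers have total size $\kappa$. Bounded-degree edits remove fewer than $\kappa$ of them. Diameter exactly two then holds provided $\Gamma$ is not complete, which the baseline guarantees since some differences are non-edges.

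For $\Aut(\Gamma)\subseteq\Xi(G)$, take an automorphism $\phi$ and, after composing with a translation, assume $\phi(1)=1$. The reconstruction then runs in three steps.
\begin{enumerate}[label=(\roman*)]
\item The robust finite patterns near the identity are chosen so that, under bounded-degree perturbation, they occur only at prescribed vertices. Hence $\phi$ preserves the initial classes.
\item Successive twin quotients recover each $G_\alpha$-coset partition. Modulo bounded-degree errors, vertices in the same coset of $G_\alpha$ have the same neighbourhoods outside it, so $\phi$ permutes cosets layer by layer, with continuity handling limit stages.
\item The finite exception packets inside each layer encode the group multiplication. From this we show $\phi$ respects each relation $\invp{xy^{-1}}$, that is, $\phi\in\Xi(G)$.
\end{enumerate}

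The main obstacle is step (iii), which amounts to choosing the packets generically enough that the only packet-preserving maps are in $\Xi(G)$. In the regular cases this forces $\phi=\mathrm{id}$. In the abelian and generalized dicyclic cases it forces $\phi$ to be the inverse-pair maps. I would first try a diagonalization: enumerate the $\kappa$ constraints, and at each stage put fewer than $\kappa$ elements into $S$ or its complement to kill one potential non-$\Xi$ map on a finite configuration.

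Abundance comes from independent binary choices. At $\kappa$ many layers, an extra packet is either included or not, and these choices are recoverable from the reconstructed hierarchy, so distinct choice sets give non-isomorphic graphs. This produces $2^\kappa$ isomorphism types, and the trivial upper bound $2^\kappa$ on the number of Cayley graphs on $G$ gives optimality. Optimality of the diameter holds because diameter one means $\Gamma$ is complete, whose automorphism group is all of $\Sym(G)$, which is strictly larger than $\Xi(G)$. Finally, in the regular cases $\Xi(G)$ consists of the translations alone, since the remark after Theorem~\ref{thm:regularExist} identifies the nontrivial inverse-pair automorphisms with the two excluded families, so the graphs are GRRs.
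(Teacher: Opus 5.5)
Your uncountable regular case (alternating layers, packets bounded in each layer, twin-quotient recovery, common neighbours taken from high layers of the right bit) follows the paper's Section~\ref{sec:main}. However, the proposal leaves out the two ingredients that this machinery rests on.

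The first is the core. Theorem~\ref{thm:recovery} recovers only the $G_0$-cosets and sees nothing inside them. It also needs the marker edges to connect every $G_0$-coset. What matters is not that markers occur ``only at prescribed vertices'': in a Cayley graph every pattern at $1$ recurs at every vertex. What matters is that no marker crosses two initial classes and that marker edges span each class.

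So you need a countable core $\Cay(H,U)$ with three properties:
\begin{itemize}
\item it already realizes $\Xi(H)$, and is a GRR in the regular case;
\item its robust-marker relation $M_{76}$ is connected;
\item $H$ is itself in a regular case, which uses Lemma~\ref{lem:witness}.
\end{itemize}
For countable $G$ your chain degenerates, so this core problem \emph{is} the theorem.

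The paper solves it by a separate mechanism, Theorem~\ref{thm:countable}. Primary inverse pairs receive distinct finite codegrees and secondary ones (squares of primary nonsquares) receive infinite codegree; Lemma~\ref{lem:recognition} turns this into $\Aut(\Cay(H,U))=\Xi(H)$. Seed blocks are inserted without disturbing frozen counts, using Banakh--Protasov finite quadratic avoidance (Lemmas~\ref{lem:BP} and~\ref{lem:seedinsert}). Protected markers (Lemma~\ref{lem:markerprescription}) cover every selected difference. Abundance there comes from the strictly increasing frozen counts. $\CN=\aleph_0$ holds only after complementation, because the primary codegrees are finite.

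The second gap is step (iii), which is where the proof actually lives, and the mechanism you suggest fails. Diagonalizing against ``potential non-$\Xi$ maps'' cannot be done in $\kappa$ stages, since there are $2^\kappa$ permutations. A finite configuration constrains a permutation only after something global has cut the candidates down to finitely many.

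In the paper that global reduction is the rigid core: it forces every rooted automorphism to satisfy $F(hx)=hF(x)$ for $h\in H$. Only relative to this exact $H$-equivariance does a packet of at most $76$ points leave at most $36$ candidate actions on a six-point core, each killed by one reserved witness. The packet comes from Lemma~\ref{lem:packet}, which rests on the sandwich equations of Lemma~\ref{lem:sandwich} and so needs $H$ nonabelian, or from Lemma~\ref{lem:booleanpacket}. Normalization $F_x(y)=F(yx)F(x)^{-1}$ then propagates $F(a_\alpha x)=a_\alpha F(x)$.

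None of this is available for abelian or generalized dicyclic groups, whose core automorphisms include inversion. The paper treats uncountable exceptional groups with a different decoder:
\begin{itemize}
\item the $17$-color coloring of Theorem~\ref{thm:finitecolor}, refined by binary functions;
\item encoded by codegree bands (Lemma~\ref{lem:abelianbands}) or by a $C_2^{18}$ factor (Lemma~\ref{lem:booleandecoder});
\item then complemented via Lemma~\ref{lem:complement}.
\end{itemize}

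There are also two smaller errors. A chain of length $\cf\kappa$ has only $\cf\kappa$ layers, and $2^{\cf\kappa}<2^\kappa$ is consistent for singular $\kappa$. You need the paper's chain of length $\kappa$, adjoining one element per step, whose $\kappa$ free bits at $0$ and at limit indices are recovered by Corollary~\ref{cor:cayleyrecovery}. Finally, $\Xi(G)=R(G)$ in the regular cases is the Leemann--de la Salle classification, Theorem~\ref{thm:inversepairindex}; the remark you cite proves only the converse direction.
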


The inverse-pair classification of Leemann and de la Salle (Theorem~\ref{thm:inversepairindex} below) shows that the existence assertion in Theorem~\ref{thm:all} implies Theorems~\ref{thm:watkins} and~\ref{thm:cayleyindex}.

The main tool is the group-independent reconstruction theorem
(Theorem~\ref{thm:recovery}), which recovers an ordinal hierarchy and its finite
exception packets directly from adjacency. In the regular-case construction, those
packets force the prescribed translation action.

Beyond the main theorems, the construction gives cofinality-scale connectivity,
domination, and deletion properties, together with closed Cantor-cube families
inside finite cylinders (Section~\ref{sec:consequences}). The reconstruction is
stable under additional layerwise bounded-degree edits
(Corollary~\ref{cor:edits}). Diameter two also yields the three-valued path
metrics recorded in Corollary~\ref{cor:metric}.

\subsection{Idea of the proof}\label{sec:proofidea}

Suppose $G$ is in a regular case. We choose a countably infinite subgroup $H$ with the same property, as justified in Lemmas~\ref{lem:witness} and~\ref{lem:core}. To extend a GRR from $H$ to $G$, we make the $H$-cosets recognizable by finite induced patterns that cannot cross between them, then synchronize the translations on those cosets using finite sets of exceptional elements. Lemma~\ref{lem:BP}--Theorem~\ref{thm:countable} construct the required core; we first explain the extension.

Starting from $H$, fix an enumeration of $G$ and adjoin to $G_\alpha$ the least-indexed element $a_\alpha$ outside it; this gives a continuous well-ordered chain $(G_\alpha)$ of subgroups exhausting $G$, with no last layer. On each layer $G_{\alpha+1}\setminus G_\alpha$, membership in the connection set is a constant bit $\eps_\alpha$ except on a finite inverse-closed packet $W_\alpha$. Packet sizes are bounded by a fixed $m$, and the bits alternate at successor indices. Thus every vertex has at most $m$ deviations from the baseline in each layer, even though its total number of deviations may be infinite.

In the core, the edges lying in finite induced $m$-robust patterns form a connected spanning graph. Robustness means that every cut has both a vertex with more than $m$ crossing neighbors and a vertex with more than $m$ crossing nonneighbors. Such a pattern cannot meet two $H$-cosets: at its highest occupied layer, a cut separates it into parts whose crossing adjacency is constant apart from at most $m$ errors per vertex. For baseline zero this bounds every crossing degree by $m$; for baseline one it bounds every crossing nondegree by $m$. Both contradict robustness. Consequently, the connected components of the edges lying in robust patterns are exactly the $H$-cosets.

Once these cosets are recognized, the baseline between two of them is read from the graph: a vertex has either at most $m$ neighbors or at most $m$ nonneighbors in the other coset. Since the cosets are infinite, the alternatives are disjoint. In the recovered baseline graph, the twin classes are exactly the next cosets in the hierarchy. Alternation supplies a vertex distinguishing any two classes that should not yet merge. Repeating the twin quotient, and taking unions at limit stages, recovers every $G_\alpha$-coset partition in its ordinal order, together with all baseline bits and error relations (Theorem~\ref{thm:recovery} and Corollary~\ref{cor:cayleyrecovery}). The exception packets can destroy the module structure, but this reconstruction needs neither modules nor a group operation. Figure~\ref{fig:recovery} illustrates these successor steps in the recovered baseline.

\begin{figure}[!htbp]
\centering
\begin{tikzpicture}[x=1cm,y=1cm,>=Latex,every node/.style={font=\small},
  cls/.style={draw,rounded corners=2pt,fill=white,minimum width=.75cm,minimum height=.75cm},
  dot/.style={circle,fill=black,inner sep=1.8pt}]
  \path[use as bounding box] (-.60,-1.35) rectangle (14.50,2.60);
  \node[font=\small\bfseries] at (1.20,2.35) {(a) $\Gamma$ (schematic)};
  \draw[line width=8pt,black!18] (.50,1.40)--(1.90,1.40);
  \draw[line width=8pt,black!18] (.50,0)--(1.90,0);
  \draw[dashed] (.50,1.40)--(1.90,0);
  \node[fill=white,inner sep=1pt,font=\footnotesize] at (1.20,.72) {layer error};
  \foreach \name/\xx/\yy in {A/.35/1.40,B/2.05/1.40,C/.35/0,D/2.05/0}{
    \node[cls] (a\name) at (\xx,\yy) {$\name$};
  }
  \node[align=center,font=\footnotesize,anchor=north] at (1.20,-.52)
    {Each box: $>2m$ vertices;\\markers omitted.};
  \draw[->,thick] (2.72,.70)--(4.12,.70);
  \node[align=center,font=\footnotesize] at (3.42,1.42) {recover $E_0$;\\read baseline};
  \node[font=\small\bfseries] at (5.20,2.35) {(b) $\Delta_0$ on $X/E_0$};
  \foreach \name/\xx/\yy in {A/4.65/1.40,B/5.75/1.40,C/4.65/0,D/5.75/0}{
    \node[dot,label=above:$\name$] (b\name) at (\xx,\yy) {};
  }
  \draw[thick] (bA)--(bB); \draw[thick] (bC)--(bD);
  \node[align=center,font=\footnotesize] at (5.20,-.75)
    {$A\!\sim\!B$, $C\!\sim\!D$: $\eps_0=1$;\\all other pairs are nonedges.};
  \draw[->,thick] (6.42,.70)--(7.82,.70);
  \node[align=center,font=\footnotesize] at (7.12,1.42) {first twin\\quotient};
  \node[font=\small\bfseries] at (9.10,2.35) {(c) $\Delta_1$ on $X/E_1$};
  \node[dot,label={above:{$\{A,B\}$}}] at (9.10,1.40) {};
  \node[dot,label={above:{$\{C,D\}$}}] at (9.10,0) {};
  \node[align=center,font=\footnotesize] at (9.10,-.75)
    {Two nonadjacent classes;\\their label is $\eps_1=0$.};
  \draw[->,thick] (10.30,.70)--(11.70,.70);
  \node[align=center,font=\footnotesize] at (11,1.42) {second twin\\quotient};
  \node[font=\small\bfseries] at (13.0,2.35) {(d) $X/E_2$};
  \node[dot,label=above:$X$] at (13.0,.70) {};
  \node[align=center,font=\footnotesize] at (13.0,-.75) {One class;\\height $\theta=2$.};
\end{tikzpicture}
\caption{Notation is from Section~\ref{sec:recovery}; the figure is meant to be read with Section~\ref{sec:proofidea} and reread after Theorem~\ref{thm:recovery}. Recovery in Theorem~\ref{thm:recovery}, shown at height two with $\eps_0=1$ and $\eps_1=0$. Panel (a) abbreviates large initial classes; the gray bands denote nearly complete joins, and the dashed segment symbolizes a layer error. Robust-marker components first recover $E_0$; the at-most-$m$ neighbor/nonneighbor test then gives the \emph{unperturbed} baseline in (b). Twin classes in that baseline, not twins of the original perturbed graph, give (c) and (d). Here twins are understood as in Theorem~\ref{thm:recovery}: the mutual adjacency of the pair is ignored. The finite diagram illustrates the successor operation only; the proof treats limit stages separately.}
\label{fig:recovery}
\end{figure}

The recovered core cosets carry copies of the same GRR, so every automorphism $F$ satisfies $F(hx)=hF(x)$ for $h\in H$. If $F$ fixes $1$, the recovered layers and baseline bits also identify each packet $W_\alpha$ as the neighbors, or the nonneighbors, of $1$ in that layer. The packets are chosen so that this exact $H$-equivariance and preservation of $W_\alpha$ force $F(a_\alpha)=a_\alpha$ (Lemmas~\ref{lem:packet} and~\ref{lem:booleanpacket}). Applying this conclusion to the rooted automorphism $F_x(y)=F(yx)F(x)^{-1}$ gives $F(a_\alpha x)=a_\alpha F(x)$ at every base point $x$. These identities and $H$-equivariance propagate along words, forcing $F$ to be the identity. Hence the full automorphism group consists of right translations.

To construct the countable core, Theorem~\ref{thm:countable} assigns distinct finite common-neighbor counts to the primary inverse pairs of Lemma~\ref{lem:countable}. Every remaining nonidentity element is a square of a primary element and receives infinitely many representations; its inverse-pair relation is recovered by two primary steps (Lemma~\ref{lem:recognition}). Finite quadratic avoidance (Lemma~\ref{lem:BP}) permits the protected seed insertions of Lemma~\ref{lem:seedinsert} while preserving the frozen counts. Each insertion is completed by covering the newly selected elements with protected robust markers (Lemma~\ref{lem:markerprescription}) before the next primary count is frozen, and later insertions preserve the frozen counts and the earlier markers. The resulting graph has automorphism group $\Xi(H)$, which consists of right translations by Theorem~\ref{thm:inversepairindex}, and has the required connected robust markers. If $H=G$, no extension is needed.

Theorem~\ref{thm:countable} also realizes $\Xi(G)$ for countable exceptional groups. Uncountable exceptional groups use the finite inverse-pair coloring proved in Appendix~\ref{app:finitecolor}, encoded by codegree bands or by a finite Boolean factor already present in the group, to recover exactly $\Xi(G)$ (Section~\ref{sec:exceptions}).

For abundance, we vary intrinsically recoverable data. In countable groups, each bit decides whether one further marked insertion raises a primary count before it is frozen, and the strictly increasing sequence of frozen counts recovers the binary sequence (Theorem~\ref{thm:countable}). In uncountable regular groups, the free bits at zero and nonzero limit indices give $2^\kappa$ choices recovered with the hierarchy (Theorem~\ref{thm:cube}). In uncountable exceptional groups, the decoder recovers a binary refinement of the finite coloring (Proposition~\ref{prop:exceptmany}). For all countable groups and for uncountable exceptional groups, complementation preserves the coded graphs' automorphism groups and nonisomorphism, and the construction supplies $\kappa$ common neighbors at each distinct pair in the complements. For uncountable regular groups, the dense layers supply these witnesses directly. The final graphs therefore have diameter two, as required in Theorem~\ref{thm:all}.

\subsection{Conventions, notation, and dependencies}\label{sec:notation}
We work in ZFC; all graphs are simple and undirected. A permutation or graph
automorphism is \emph{rooted} if it fixes the identity vertex (the vertex $0$ in
additive notation). For a connection set
$S\subseteq G\setminus\{1\}$ our right Cayley convention is
\[
x\sim y\quad\Longleftrightarrow\quad xy^{-1}\in S.
\]
Writing $R(G)$ for the prescribed right translations gives
\begin{equation}\label{eq:sandwich}
R(G)\le\Xi(G)\le\Aut(\Cay(G,S)).
\end{equation}
For $g\ne1$, define
\[
\nu_S(g)=|\{(u,v)\in S^2:u^{-1}v=g\}|=|S\cap Sg|.
\]
We call $\nu_S(g)$ the \emph{codegree of the difference $g$} with respect to $S$.
The common-neighbor formula in this convention is
\[
\CN(x,y)=|Sx\cap Sy|=|S\cap Syx^{-1}|
=\nu_S(yx^{-1})=\nu_S(xy^{-1});
\]
the last equality follows by interchanging the endpoints of a representation.
Several references use edges $g^{-1}h\in S$ and the left regular action. Inversion
$g\mapsto g^{-1}$ transports that convention to ours and conjugates the left
regular group to $R(G)$. The order-four convention for generalized dicyclic
groups agrees with \cite[Introduction]{LdSJAC}; it does not include split
generalized dihedral groups with an involutory inverter.

\paragraph{Notation.}
For a set $X$, let $[X]^2$ be its set of two-element subsets. If
$\mathcal F\subseteq[X]^2$ is an edge set, write
\[
\maxdeg(X,\mathcal F)=\sup_{x\in X}|\{y\in X:\{x,y\}\in \mathcal F\}|
\]
for the supremum of its vertex degrees, with value $0$ on the empty graph.
For a subset $P$ of a group, write $\Diff(P)=P^{-1}P\setminus\{1\}$ instead.
The symbols $\Delta_\alpha$ are reserved for the baseline quotient graphs
in Section~\ref{sec:recovery}; its layer-error relations are denoted by $\mathcal L_\alpha$. For a group $G$, put
\[
\mathcal I(G)=\{\invp{g}:g\in G\setminus\{1\}\},\qquad
\mathcal X_G=\{0,1\}^{\mathcal I(G)}.
\]
We identify a connection set with its indicator on inverse pairs and give $\mathcal X_G$
the product topology of the discrete two-point factors. A basic cylinder
prescribes membership of finitely many inverse pairs.
For an ordinal $\theta$, $\F_2^{(\theta)}$ is the additive group of finitely
supported functions $\theta\to\F_2$, and
$\operatorname{supp}(v)=\{\alpha<\theta:v(\alpha)\ne0\}$.
For an abelian group written additively,
$A[2]=\{a\in A:2a=0\}$ and $2A=\{2a:a\in A\}$.
For an infinite cardinal $\kappa$, $\cf(\kappa)$ is the least cardinality of
an unbounded subset of its initial ordinal.
The complement of a graph $\Gamma$ is denoted by $\overline\Gamma$ (or
$\Gamma^c$); the complementary connection set is
$\overline S=(G\setminus\{1\})\setminus S$.

\paragraph{Dependencies and scope.}
The regular cases use finite quadratic avoidance \cite[Theorem~1]{BP} and the inverse-pair
classification in Theorem~\ref{thm:inversepairindex}. All countable-core,
packet, and hierarchy arguments are included. Theorem~\ref{thm:countable} supplies countably infinite robust cores by a protected-marker schedule, so the regular cases require no asymptotic finite-GRR input. For the exceptional families, Appendix~\ref{app:finitecolor}
proves Theorem~\ref{thm:finitecolor}, which needs at most $17$ colors, by adapting the construction of \cite{FC} and using the published
short-word theorem \cite[Theorem~8]{LdS} and finite-color compactness
\cite{dBE}. Thus no result from that preprint is left as a separate unproved
premise. The regular-case construction does not depend on this appendix.

\subsection{Inverse-pair classification and the Cayley-index deduction}\label{sec:indexinput}
\begin{theorem}[Leemann--de la Salle, inverse-pair index]
\label{thm:inversepairindex}
For every infinite group $G$,
\[
[\Xi(G):R(G)]=
\begin{cases}
1, & G\text{ is Boolean, or is neither abelian nor generalized dicyclic};\\[2pt]
2, & \begin{array}[t]{@{}l@{}}
G\text{ is non-Boolean abelian, or is generalized dicyclic}\\
\text{and not isomorphic to }Q_8\times B\text{ for any Boolean group }B;
\end{array}\\[2pt]
8, & G\cong Q_8\times B\text{ for an infinite Boolean group }B.
\end{cases}
\]
\end{theorem}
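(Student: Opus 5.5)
The plan is to reduce the index to the size of a rooted stabilizer and then classify the rooted elements. Since $R(G)\le\Xi(G)$ acts regularly on $G$, we have $\Xi(G)=R(G)\,\Xi(G)_1$ with $R(G)\cap\Xi(G)_1=1$, where $\Xi(G)_1$ is the stabilizer of the identity. Hence $[\Xi(G):R(G)]=|\Xi(G)_1|$. For rooted $F\in\Xi(G)$, putting $y=1$ in the defining relation gives $F(x)\in\invp{x}$ for every $x$. So we may write $F(x)=x^{\sigma(x)}$ with a sign function $\sigma$, which is well defined up to the ambiguity on involutions. The defining relation becomes the combinatorial condition
\[
x^{\sigma(x)}y^{-\sigma(y)}\in\{xy^{-1},\,yx^{-1}\}\qquad\text{for all }x,y\in G.
\]
The whole proof is an analysis of which sign functions satisfy it.

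\textbf{Step 1: local constraints.} Take $x,y$ with $\sigma(x)=1$ and $\sigma(y)=-1$, both of order greater than two. The condition $xy\in\{xy^{-1},yx^{-1}\}$ forces $y^2=1$, a contradiction, or $xy=yx^{-1}$, so $y$ inverts $x$. Similarly, two elements with $\sigma=-1$ give $x^{-1}y\in\{xy^{-1},yx^{-1}\}$, which forces $x$ and $y$ to commute or $x^{2}=y^{2}$-type relations.

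From these, set $P=\{x: F(x)=x\}$ and $M=\{x:F(x)=x^{-1}\}$. I would show that if $M$ contains an element of order greater than two, then $P$ is an abelian subgroup of index at most two. I would also show that every element of $M\setminus P$ inverts $P$ and commutes with the non-involutions of $M$ appropriately. The argument applies the condition to the triples $x,y,xy$: one uses $F\in\Xi(G)$ to compare $F(xy)$ with $F(x),F(y)$ through the pairs $(xy,y)$ and $(xy,x^{-1})$.

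\textbf{Step 2: case split.} There are three cases.
\begin{itemize}[nosep]
\item If $G$ is Boolean, every $\invp{x}$ is a singleton, so $F=\mathrm{id}$ and the index is $1$.
\item If $F\ne\mathrm{id}$ and $G$ is not Boolean, Step 1 shows that $G$ is abelian, and then $F$ is global inversion, or $G$ is generalized dicyclic, and then $F$ is the standard map fixing the abelian index-two subgroup $A$ pointwise and inverting $G\setminus A$. In the remaining groups the only rooted element is the identity, giving index $1$.
\item Conversely, in these two families the named maps are checked directly to lie in $\Xi(G)$, so the index is at least $2$.
\end{itemize}

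\textbf{Step 3: exact counting.} In the generalized dicyclic case, $\Xi(G)_1$ consists of the maps fixing pointwise some abelian index-two subgroup $A$ for which $G\setminus A$ contains an element of order four inverting $A$, and inverting the complement. So $|\Xi(G)_1|-1$ is the number of such subgroups $A$. I would show that an element outside $A$ of order four that inverts $A$ makes the square map rigid. Then at most one such $A$ exists unless $G$ has three of them. Having three forces each element of order four to lie in $A_1\cup A_2\cup A_3$ with all three structures compatible. This yields $G\cong Q_8\times B$ with $B$ Boolean, recovering the finite computation $|\Xi(Q_8)_1|=8$, generated by the three dicyclic maps and their products acting on the three cyclic subgroups $\cl{i},\cl{j},\cl{k}$. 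For $Q_8\times B$, $\Xi_1$ is determined by its action on the sign pattern of the $Q_8$ coordinate, which gives exactly $8$. One must also verify that products of two dicyclic maps are again in $\Xi$; these are maps that are neither of the above forms. Step 1 must therefore be phrased as "$F$ acts as inversion on a union of cosets of a subgroup" rather than assuming a single $A$.

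\textbf{Main obstacle.} I expect the main obstacle to be Step 3 together with the bookkeeping of $\sigma$ on involutions. The difficulty is proving that $\Xi(G)_1$ has no elements other than the dicyclic maps and, in the $Q_8\times B$ case, their products, and pinning the count at exactly $2$ or $8$ rather than $4$. I would first try to prove that $\Xi(G)_1$ is an elementary abelian $2$-group whose nontrivial elements correspond bijectively to the subgroups $A$ as above or to their symmetric-difference combinations. I would then bound the number of admissible $A$ by analysing $\cl{x,y}$ for $x,y$ of order four lying in different admissible subgroups; that subgroup must be $Q_8$.
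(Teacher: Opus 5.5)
Your reduction $[\Xi(G):R(G)]=|\Xi(G)_1|$ and your pairwise sign computations are correct. That reduction is in fact the only argument the paper gives; it imports the classification itself from Leemann--de la Salle (\cite[Theorem~2]{LdS}, with nonabelian input \cite[Theorem~7]{LdSJAC}). However, the structural claims that carry your Steps~1--3 are false.

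The Step~1 lemma fails already for abelian groups. In $\mathbb Z$ with $F$ the inversion, $M$ contains elements of infinite order, yet $P=\{0\}$. More seriously, it fails in $Q_8$. The map inverting only $\pm i$ lies in $\Xi(Q_8)_1$, since $i$ inverts $j$ and $k$ by conjugation, yet its fixed set $P=\{\pm1,\pm j,\pm k\}$ is not a subgroup. Global inversion also lies in $\Xi(Q_8)_1$, and it has $P=\{\pm1\}$, of index four.

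Hence your Step~2 dichotomy (a nonidentity $F$ on a nonabelian group is the standard map of some index-two $A$) is false for $Q_8\times B$, and Step~3 cannot produce $8$. Let $F_i,F_j,F_k$ be the three dicyclic maps of $Q_8\times B$, fixing $\langle i\rangle\times B$, $\langle j\rangle\times B$, $\langle k\rangle\times B$ respectively. They satisfy $F_iF_j=F_k$, so together with the identity they form a Klein four-group. Three consequences follow:
\begin{enumerate}[label=(\alph*),nosep]
\item counting admissible $A$ together with products of their maps gives $4$;
\item your remark that products of two dicyclic maps are of a new form is wrong;
\item your formula $|\Xi(G)_1|-1=\#\{A\}$ contradicts the value $8$ you quote.
\end{enumerate}
The four missing elements are the three single-direction inversions and global inversion. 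In fact $\Xi(Q_8\times B)_1$ consists of independent sign choices on $\{\pm i\}\times B$, $\{\pm j\}\times B$ and $\{\pm k\}\times B$.

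Your own computation already gives the correct intermediate object. Let $M'$ be the set of non-involutions inverted by $F$. Then $F\mapsto M'$ identifies $\Xi(G)_1$ with the inverse-closed sets $M'$ of non-involutions satisfying two conditions: every element of $M'$ inverts every element of $G\setminus M'$, and any two elements of $M'$ commute or have equal squares. Composition corresponds to symmetric difference, which is why $\Xi(G)_1$ is elementary abelian. The abelian case then follows at once: the complement of a nonempty admissible $M'$ consists of involutions, so $M'$ is the set of all non-involutions.

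What remains is the genuinely group-theoretic part, which your sketch does not supply:
\begin{enumerate}[label=(\roman*),nosep]
\item a nonempty admissible $M'$ in a nonabelian group forces $G$ to be generalized dicyclic;
\item in a generalized dicyclic group not isomorphic to any $Q_8\times B$ there is exactly one nonempty admissible set, the complement of the abelian index-two subgroup;
\item in $Q_8\times B$ the admissible sets are exactly the unions of the three directions.
\end{enumerate}
In particular, the case where $M'$ contains every non-involution, so that $P$ consists only of involutions, cannot be handled by any argument of the form ``$P$ is an abelian subgroup of index at most two.'' This is the content of \cite[Theorem~7]{LdSJAC} and its counting refinement. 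Without it the proposal does not establish the displayed values.
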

This is the numerical form of \cite[Theorem~2]{LdS}. Its nonabelian input is
\cite[Theorem~7]{LdSJAC}, which does not assume finite generation. The index
is the order of the identity stabilizer in $\Xi(G)$, since $R(G)$ acts
regularly.

\begin{proof}[Deduction of Theorem~\ref{thm:cayleyindex} from Theorem~\ref{thm:all}]
By \eqref{eq:sandwich} and Theorem~\ref{thm:inversepairindex}, every Cayley graph
has index at least the displayed value of $[\Xi(G):R(G)]$. Theorem~\ref{thm:all}
supplies a connected realization of $\Xi(G)$, attaining that value.
\end{proof}

\subsection{The historical formulation and the scope of the result}
We use the formulation in Leemann--de la Salle
\cite[Conjecture~1 in the introduction]{LdSJAC}, who attribute it to Watkins's
1976 free-products paper \cite{Watkins76}. Their statement begins
``There is an integer $n$ such that every group of cardinality at least $n$''
and requires a GRR after excluding the two algebraic families in
Theorem~\ref{thm:watkins}. In full mathematical form, their conjecture is
\[
\exists n\in\N\quad\forall G\text{ in a regular case}:\qquad
 |G|\ge n\ \Longrightarrow\ G\text{ admits a GRR}.
\]
The thirteen
additional finite exceptions have order at most $32$
\cite[Introduction]{LdS}; therefore the finite classification together with
Theorem~\ref{thm:watkins} gives $n=33$ in this formulation.

The finite cases were settled by Godsil, Hetzel, and their predecessors
\cite{Imrich69,Watkins71,NW1,NW2,Imrich75,IW,Hetzel,Godsil}; Leemann and
de la Salle settled the infinite finitely generated cases with locally finite
Cayley graphs \cite{LdSAHL}; Leemann has announced a program concerning the
countable case and small generating sets \cite{LeemannTalk}; the scope of the
theorem proved here is arbitrary infinite cardinality. That program has not, at
the time of writing, been published as a complete proof. The announcement is
acknowledged as relevant overlapping work, not used as a proof input.

\paragraph{Valence and generating-set refinements.}
The graphs constructed here have valence $|G|$, which is unavoidable for a
connected Cayley graph of a non-finitely-generated group. Locally finite GRRs
for finitely generated groups are the subject of \cite{LdSAHL}; refinements
controlling the generating set are not treated here. The announced ``small
generating set'' program is described in \cite[seminar announcement]{LeemannTalk}.

Theorem~\ref{thm:cayleyindex} also establishes the infinite-group part of
Leemann--de la Salle's later Cayley-index conjecture
\cite[Conjecture~3]{LdS}. 
\subsection{Related representation and reconstruction results}
Frucht's realization of finite groups as graph automorphism groups \cite{Frucht}
and Sabidussi's characterization through regular graph actions \cite{Sabidussi}
provide the background. Here the prescribed group is itself the vertex set,
and the prescribed translations are retained. Morris and Tymburski completed
the finite minimum-index problem \cite{MT}. Babai's directed representations
\cite{Babai78} address a different problem, since forgetting directions can add
automorphisms; Watkins's free-product construction \cite{Watkins76} is an earlier
infinite undirected result. Leemann and de la Salle developed orientation- and
color-rigidity methods \cite{LdSJAC}, and determined the remaining finitely
generated Cayley indices \cite[Theorem~1]{LdS}.

Byrne, Donner, and Sibley computed the complete inverse-pair symmetry group for
finite groups \cite{BDS}; the arbitrary-group classification used here is
Theorem~\ref{thm:inversepairindex}. In the quaternion--Boolean case the rooted
group has eight elements, corresponding to independent signs on the three
quaternion directions. The Boolean-factor identity in
Section~\ref{sec:exceptions} is also \cite[Lemma~4]{LdS}; see also the
color-preserving Cayley-automorphism literature \cite{CCA}.

Babai and Godsil initiated the asymptotic GRR program \cite{BG82}; its finite
abelian and generalized-dicyclic counterparts were studied in \cite{DSV,MSV}.
Xia and Zheng's uniform theorem \cite[Theorem~1.3]{XZ} completes the finite asymptotic program. It is cited here for context, not used in the countable-core construction.

The author's invariant-metric constructions \cite{Reflection,FC} retain explicit
distance classes. The present argument reconstructs distinctions from adjacency
alone. Without its exception packets, the construction is a substitution over an
alternating join--union hierarchy; at finite depth its baseline is described by a
cotree \cite{Cographs}. The packets can destroy the module property of the core
copies, since an exterior vertex may distinguish two vertices in one class, but
Theorem~\ref{thm:recovery} still recovers the hierarchy and the packets then
synchronize the local actions. At limits the proof uses continuous equivalence
relations.

\paragraph{Organization.}
Section~\ref{sec:cores} combines finite avoidance with protected markers and proves the countable theorem, Theorem~\ref{thm:countable}.
Section~\ref{sec:recovery} proves the group-free reconstruction theorem and treats finite packets.
Section~\ref{sec:main} proves the prescribed uncountable cube, Theorem~\ref{thm:cube}, Theorem~\ref{thm:watkins} and the uncountable
regular cases of Theorem~\ref{thm:all}. Section~\ref{sec:exceptions} completes
Theorem~\ref{thm:all} using the finite-color result proved in
Appendix~\ref{app:finitecolor}. Section~\ref{sec:consequences} gives the
finite-prescription and algebraic consequences, and the section after it records computational checks and further questions.

\section{Avoidance, protected markers and the countable theorem}\label{sec:cores}
This section works in an infinite group $H$ and ends with the countable theorem. Every nonidentity difference is primary or secondary (Lemma~\ref{lem:countable}); a seed block for a target difference avoids every frozen primary difference, so it can be inserted without changing their counts (Lemma~\ref{lem:seedinsert}), and protected markers then cover the newly selected differences by finite induced $m$-robust subgraphs without disturbing those counts (Lemmas~\ref{lem:star} and~\ref{lem:markerprescription}). Theorem~\ref{thm:countable} interleaves these finite operations to code a binary sequence in the frozen primary counts of a countable group, giving a closed Cantor family of connection sets that realize $\Xi(H)$ under any finite prescription.

\subsection{Primary and secondary differences and seed insertion}\label{sec:countable}
We first record the exact avoidance input. A group monomial of degree at most two contains at most two occurrences of $X$ or $X^{-1}$; coefficients do not count toward its degree.
The \emph{second Zariski topology} on $H$ has as an open subbasis the cozero
sets $\{h\in H:w(h)\ne1\}$ of all such monomials, including constants and
degree-one words. Equivalently, the solution sets of
$a_0X^{\eta_1}a_1X^{\eta_2}a_2=1$ and their lower-degree analogues form a closed
subbasis, where the coefficients lie in $H$ and $\eta_1,\eta_2\in\{1,-1\}$
are chosen independently; mixed exponents are included \cite{BP}.
\begin{lemma}[Finite quadratic avoidance]\label{lem:BP}
In any infinite group, finitely many degree-at-most-two inequalities $w_i(X)\ne1$, all true at $X=1$, have infinitely many simultaneous solutions.
\end{lemma}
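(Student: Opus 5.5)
The plan is to read the statement as a non-discreteness property of the second Zariski topology just defined, and then to obtain it from \cite[Theorem~1]{BP}. Let $w_1,\dots,w_k$ be monomials of degree at most two with $w_i(1)\ne1$. Then
\[
U=\{h\in H: w_i(h)\ne1\text{ for all }i\}
\]
is a basic open set of the second Zariski topology, and it contains $1$. The lemma asserts exactly that $U$ is infinite.

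Equivalently, $H\setminus U$ is a finite union of closed subbasic sets $Z_i=\{h:w_i(h)=1\}$, none of which contains $1$. We must show that no such finite union has cofinite complement. Suppose, for contradiction, that
\[
H = F\cup Z_1\cup\dots\cup Z_k
\]
for some finite set $F$.

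First we dispose of the trivial cases.
\begin{itemize}[leftmargin=2em]
\item If $w_i$ is constant, then $w_i(1)\ne1$ forces $Z_i=\varnothing$.
\item If $w_i$ has degree one, say $a X^{\pm1} b$, then $Z_i$ has at most one point and can be absorbed into $F$.
\item If $w_i$ has degree two with equal exponents, say $aXbXc$, then substituting $Y=bX$ reduces it to the form $Y^2=d$.
\item If $w_i$ has degree two with mixed exponents, say $aXbX^{-1}c$, then $Z_i$ is a coset of the centralizer $C_H(b)$, translated appropriately.
\end{itemize}
So each degree-two set $Z_i$ is either a translate of a set of "square roots" $\{Y:Y^2=d\}$ or a coset of a centralizer, or a similar set involving conjugation. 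The requirement $1\notin Z_i$ says that the corresponding equation is not satisfied at the identity.

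The core of the argument is the Banakh--Protasov covering theorem. It is an analogue of B.\,H.~Neumann's lemma for coset coverings, extended to these quadratic sets. It states that if an infinite group is covered by finitely many such sets together with a finite set, then some member of the cover must contain the identity; in topological terms, the second Zariski topology of an infinite group has no isolated points. I will invoke this as \cite[Theorem~1]{BP}, which is the dependency the paper declares. Applied to our cover, it produces some $Z_i$ containing $1$, contradicting $w_i(1)\ne1$. Hence $U$ is infinite.

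The main obstacle is the mixture of the two kinds of sets. If only coset-type sets appeared, Neumann's lemma alone would give a finite-index subgroup; the square-root sets are what spoil this and are the genuine content of \cite{BP}. The main task in writing the proof is therefore to check carefully that our mixed-exponent monomials $a_0X^{\eta_1}a_1X^{\eta_2}a_2$ fall within the class of equations treated there, rather than to reprove that result.
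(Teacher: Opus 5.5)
Your route is the paper's: everything rests on \cite[Theorem~1]{BP}. The gap is that you cite that theorem in a form that is already the lemma. What Banakh--Protasov supply, and all the paper uses, is that the second Zariski topology of an infinite group is \emph{nondiscrete}. Your ``covering theorem'' says that $H=F\cup Z_1\cup\dots\cup Z_k$ with $F$ finite forces $1\in Z_i$ for some $i$. That is equivalent to ``no open neighborhood of $1$ is finite''. Passing from nondiscreteness to this statement is exactly the content of the paper's proof, and your proposal skips it.

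Two observations fill it:
\begin{enumerate}[nosep]
\item The topology is $T_1$: $\{h\}$ is the zero set of the degree-one monomial $Xh^{-1}$. So if the cozero set $U\ni1$ were finite, intersecting it with the finitely many open sets $\{X:Xu^{-1}\ne1\}$, $u\in U\setminus\{1\}$, would make $\{1\}$ open.
\item The topology is translation invariant: $w(Xg)$ is again a monomial of degree at most two, after absorbing $g$ or $g^{-1}$ into a neighboring coefficient. So $X\mapsto Xg$ is a homeomorphism. An isolated point $1$ would then make every point isolated, contradicting nondiscreteness.
\end{enumerate}
Even your phrase ``no isolated points'' needs the second observation to follow from nondiscreteness, and the first to yield your covering form.

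You have also misplaced the remaining work. The case analysis into square-root sets and centralizer cosets, and the comparison with Neumann's lemma, are never used. There is nothing to check about mixed exponents either. The second Zariski topology is defined by all monomials $a_0X^{\eta_1}a_1X^{\eta_2}a_2$ with independent signs, so $U$ is open by definition. The real task is the topological step above.

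Finally, as literally phrased (``some member of the cover must contain the identity''), your conclusion is vacuous. When no $Z_i$ contains $1$, the finite set $F$ does. You mean some $Z_i$.
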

\begin{proof}
Banakh--Protasov \cite[Theorem~1]{BP} prove that the second Zariski topology is nondiscrete. The simultaneous cozero set is an open neighborhood of 1, so it remains only to see that such a neighborhood cannot be finite. If a nonempty open set were finite, deleting its other points would isolate one point, and translation would make the topology discrete. The simultaneous solution set is therefore infinite.
\end{proof}

For a group $H$ put $\operatorname{Sq}(H)=\{x^2:x\in H\}$ and
\[
B=\{x^2:x\notin\operatorname{Sq}(H)\}\setminus\{1\},\qquad A_*=(H\setminus\{1\})\setminus B;
\]
the elements of $B$ and their inverse pairs are \emph{secondary}, those of $A_*$ are \emph{primary}. A \emph{representation} of $h$ in a set $A$ is a pair $(u,v)\in A\times A$ with $u^{-1}v=h$, and the \emph{count} $\nu_A(h)$ is their number; for symmetric $A$ it is the same for $h$ and $h^{-1}$. For finite $P\subseteq H$ put $\Diff(P)=\{p^{-1}q:p,q\in P\}\setminus\{1\}$. The \emph{seed} of an inverse pair $\invp{h}$, its \emph{target}, is $P=\{r,r^{-1}\}$ for an $r$ fixed with $r^2=h$ if $h\in\operatorname{Sq}(H)$, and $P=\{1,h,h^{-1}\}$ otherwise. For symmetric $P$ and $t\in H$ the \emph{block} is $Y=tP\cup Pt^{-1}$, and \emph{inserting} it into a selected set $A$ gives $A\cup Y$.

The partition is designed so that each seed creates only its own primary inverse pair: any further difference it creates falls in the secondary class, where infinite codegree is allowed. In additive $\mathbb Z$ the secondary elements are those $\equiv2\pmod4$, and the seed $\{0,1,-1\}$ of the primary difference $1$ gives the differences $\pm1$ and $\pm2$; the extra pair $\pm2$ is secondary.

\begin{lemma}[Primary and secondary pairs and their seeds]\label{lem:countable}
Let $H$ be an infinite group. Every nonidentity inverse pair is primary or secondary, and not both; every secondary $b$ is $a^2$ for a primary nonsquare $a$; there are infinitely many primary inverse pairs. The seed $P$ of a nonidentity (primary or secondary) pair $\invp{h}$ has $P=P^{-1}$, $|P|\le3$ and $h\in\Diff(P)$, with $\Diff(P)=\invp{h}$ if $h$ is a square and $\Diff(P)\subseteq\invp{h}\cup B$ otherwise.
\end{lemma}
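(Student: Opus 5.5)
The plan is to verify each clause directly from the definitions of $\operatorname{Sq}(H)$, $B$ and $A_*$, after first recording that these sets are inverse-closed. The main point is the inverse-closure of $B$. Inversion is a bijection of $H$ that maps $\operatorname{Sq}(H)$ onto itself, since $(x^2)^{-1}=(x^{-1})^2$. Hence $x\notin\operatorname{Sq}(H)$ iff $x^{-1}\notin\operatorname{Sq}(H)$, and so $B^{-1}=B$. Consequently $A_*^{-1}=A_*$. Every nonidentity inverse pair $\invp{h}$ therefore lies entirely in $B$ or entirely in $A_*$, which gives the dichotomy ``primary or secondary, and not both''.

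For the second clause, let $b\in B$, so $b=a^2$ with $a\notin\operatorname{Sq}(H)$. Then $a\ne1$, because $1=1^2$ is a square. Also $a\notin B$, since every element of $B$ is a square. Thus $a\in A_*$ is a primary nonsquare with $a^2=b$.

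For the infinitude of primary pairs, I split into two cases according to the set $N=H\setminus\operatorname{Sq}(H)$ of nonsquares.
\begin{itemize}
\item If $N$ is infinite, then $N\subseteq A_*$, because elements of $B$ are squares and $1\notin N$. So $A_*$ is infinite.
\item If $N$ is finite, then $B\subseteq\{a^2:a\in N\}$ is finite. Hence $A_*=(H\setminus\{1\})\setminus B$ is cofinite in the infinite group $H$.
\end{itemize}
In either case $A_*$ is infinite. Since inverse pairs have at most two elements, there are infinitely many primary pairs.

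For the seed statements, note first that being a square is a property of the pair $\invp{h}$, so the case split defining the seed is well posed. Symmetry of $P$ and the bound $|P|\le3$ are immediate in both cases.
\begin{itemize}
\item If $h=r^2$, then $r\ne r^{-1}$, since otherwise $h=1$. For $P=\{r,r^{-1}\}$ the quotients $p^{-1}q$ are $1$, $r^{-1}r^{-1}=h^{-1}$ and $rr=h$. Hence $\Diff(P)=\invp{h}$.
\item If $h$ is a nonsquare, take $P=\{1,h,h^{-1}\}$. The quotients are $h^{\pm1}$, from pairs involving $1$, and $h^{\pm2}$, from the pairs $(h,h^{-1})$ and $(h^{-1},h)$. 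If $h^2=1$, the set $P$ collapses to $\{1,h\}$ and only $h$ occurs. Otherwise $h^2\in B$, because $h\notin\operatorname{Sq}(H)$, and $h^{-2}\in B$ by inverse-closure. So $h\in\Diff(P)\subseteq\invp{h}\cup B$.
\end{itemize}

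None of these steps is a real obstacle. The only points needing care are the degenerate cases $h^2=1$ and $r=r^{-1}$, and the verification that a primary element $a$ with $a^2\in B$ is indeed nonidentity and outside $B$.
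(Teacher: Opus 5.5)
Your proof is correct and follows the same route as the paper: inverse-closure of $B$ and $A_*$ gives the dichotomy, a nonsquare is automatically nonidentity and outside $B$, and the seed differences are computed directly. Your case split on the set of nonsquares is an unpacked form of the paper's one-line argument that finitely many primary pairs would make $H$ finite, and your separate treatment of $h^2=1$ matches the paper's observation that $a^2\in B\cup\{1\}$.
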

\begin{proof}
The sets $B$ and $A_*$ are inverse-closed and partition $H\setminus\{1\}$, so every nonidentity inverse pair is primary or secondary, and not both. Every secondary $b$ is $a^2\ne1$ for a primary nonsquare $a$. There are infinitely many primary inverse pairs: otherwise their elements and the squares of their nonsquare elements would make $H$ finite.

The seed is chosen so that the target difference appears while already protected primary differences do not. For a square $h=r^2$, primary or secondary, the seed $P=\{r,r^{-1}\}$ has $\Diff(P)=\invp{h}$, and $r\ne r^{-1}$ as $h\ne1$. For a primary nonsquare $a$, the seed $P=\{1,a,a^{-1}\}$ has $\Diff(P)=\{a^{\pm1},a^{\pm2}\}\setminus\{1\}\subseteq\invp{a}\cup B$, since $a^2\in B\cup\{1\}$. Every secondary $b$ is a square, so its seed is of the first kind. Every seed is symmetric, has at most three elements and contains its target in $\Diff(P)$.

\end{proof}

\begin{lemma}[Protected seed insertion]\label{lem:seedinsert}
Let $H$ be an infinite group, let $A,R\subseteq H\setminus\{1\}$ be finite, symmetric and disjoint (selected and forbidden), let $D=D^{-1}\subseteq H\setminus\{1\}$ be finite (protected), and let $P=P^{-1}\subseteq H$ be finite and nonempty. For $t\in H$ let $Y=tP\cup Pt^{-1}$ (the \emph{block}). The \emph{seed inequalities} are
\[
 (tp)^{-\sigma}(tq)^{\sigma'}\ne d\qquad(p,q\in P,\ \sigma,\sigma'\in\{1,-1\},\ d\in D\setminus\Diff(P))
\]
and the \emph{freshness conditions} are $Y\cap(A\cup R\cup\{1\})=\varnothing$ and $u^{-1}v\notin D$ whenever one of $u,v$ lies in $A$ and the other in $Y$ (no old--new representation of $D$). For every finite $E\subseteq H$ and every finite set of further inequalities of degree at most two in $t$ that hold at $t=1$, some $t\notin E$ satisfies these, the seed inequalities and the freshness conditions. For such $t$ and $A'=A\cup Y$: $A'\cap R=\varnothing$; $\nu_{A'}(d)=\nu_A(d)$ for $d\in D\setminus\Diff(P)$; $\nu_{A'}(d)\le\nu_A(d)+|Y|$ for $d\in D$; $\nu_{A'}(h)\ge\nu_A(h)+1$ for $h\in\Diff(P)$; and $|Y|\le6$ if $|P|\le3$. 
\end{lemma}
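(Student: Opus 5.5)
The plan is to turn every requirement on $t$ into a finite list of inequalities of degree at most two in $t$ that hold at $t=1$, or failing that into finitely many excluded values, and then apply Lemma~\ref{lem:BP}. The inequalities give an infinite solution set, so we can still avoid $E$ and any finite exceptional set. The numerical conclusions will then follow by counting representations in $A'=A\cup Y$.

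\textbf{Reducing the conditions.} Elements of $Y$ have the form $tp$ or $pt^{-1}$ with $p\in P$; note $(pt^{-1})^{-1}=tp^{-1}$ and $P=P^{-1}$, so $Y$ is symmetric. We treat each condition in turn.
\begin{enumerate}[label=(\roman*)]
\item $Y\cap(A\cup R\cup\{1\})=\varnothing$ amounts to $tp\ne c$ and $pt^{-1}\ne c$ for $c\in A\cup R\cup\{1\}$. These are degree-one inequalities. They hold at $t=1$ only when $P\cap(A\cup R\cup\{1\})=\varnothing$, which may fail (for instance $1\in P$). We therefore do not feed them to Lemma~\ref{lem:BP}. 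Each such equation has exactly one solution $t$, so we simply add these finitely many values to $E$.
\item The old--new freshness conditions are $a^{-1}tp\ne d$, $a^{-1}pt^{-1}\ne d$, $(tp)^{-1}a\ne d$ and $tp^{-1}a\ne d$ for $a\in A$, $d\in D$. Each is again of degree one with a single solution, so those values also go into $E$.
\item The seed inequalities $(tp)^{-\sigma}(tq)^{\sigma'}\ne d$ for $d\notin\Diff(P)$ are genuinely of degree two. Two subcases arise.
\begin{itemize}
\item For $\sigma=1$, $\sigma'=1$ the expression is $p^{-1}q$, independent of $t$. It lies in $\Diff(P)\cup\{1\}$, so it differs from $d$ automatically because $d\ne1$ and $d\notin\Diff(P)$.
\item The other sign patterns, for example $t^{-1}\cdot(\text{stuff})\cdot t^{-1}$, are degree-two words whose value at $t=1$ is a product of elements of $P^{\pm1}$.
\end{itemize}
\end{enumerate}

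\textbf{Main obstacle.} The delicate point is checking that every seed inequality holds at $t=1$. At $t=1$ each left side is a product of two elements of $P=P^{-1}$. It must not equal $d\in D\setminus\Diff(P)$, but a product $pq$ need not lie in $P^{-1}P$ in general. The cleanest repair is to note that $\sigma,\sigma'$ range over both signs and use $P=P^{-1}$. Alternatively, we can use the form of differences actually needed: representations within $Y$ are $u^{-1}v$ with $u,v\in Y$. Writing $u=tp$ or $(tp')^{-1}$ gives exactly $(tp)^{-\sigma}(tq)^{\sigma'}$ up to replacing $p$ by $p^{-1}$, and at $t=1$ these reduce to $p^{\mp1}q^{\pm1}\in P^{-1}P$ because $P$ is symmetric. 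Hence they lie in $\Diff(P)\cup\{1\}$ and differ from $d$. Any inequality among them that fails at $t=1$ would be one with a constant value, already handled in (iii); otherwise we again fall back on finitely many exceptional solutions. With all conditions in hand, Lemma~\ref{lem:BP}, applied to the seed inequalities and the given further inequalities, yields infinitely many $t$, hence some $t\notin E$.

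\textbf{Consequences.} Disjointness $A'\cap R=\varnothing$ follows from $A\cap R=\varnothing$ and freshness. For $d\in D\setminus\Diff(P)$, every representation $(u,v)$ of $d$ in $A'$ has both coordinates in $A$. Indeed, mixed pairs are excluded by freshness, and pairs inside $Y$ are excluded by the seed inequalities, since each $u^{-1}v$ with $u,v\in Y$ is one of the words $(tp)^{-\sigma}(tq)^{\sigma'}$. So $\nu_{A'}(d)=\nu_A(d)$. For general $d\in D$, mixed pairs are still excluded, and each $u\in Y$ determines at most one $v=ud$, which gives the bound $+|Y|$. For $h=p^{-1}q\in\Diff(P)$, the pair $(tp,tq)$ lies in $Y^2$ and is a new representation because $tp\notin A$. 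Finally $|Y|\le2|P|\le6$.
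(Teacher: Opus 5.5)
Your proposal is correct and follows essentially the same route as the paper. The degree-one freshness and old--new conditions become finitely many excluded values of $t$. The seed inequalities hold at $t=1$ because $P=P^{-1}$ puts each value $p^{-\sigma}q^{\sigma'}$ in $\Diff(P)\cup\{1\}$, so Lemma~\ref{lem:BP} gives infinitely many admissible $t$. The counts then follow by sorting representations into old--old, old--new and new--new pairs.

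The one blemish is your remark about ``falling back on finitely many exceptional solutions'' for a seed inequality that fails at $t=1$. That fallback would be invalid for a genuine degree-two equation, since e.g.\ $t^2=1$ can have infinitely many solutions. It is vacuous here, however, because you have already shown that no seed inequality fails at $t=1$.
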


\begin{proof}
Choose
\[
Y=tP\cup(tP)^{-1}=tP\cup Pt^{-1}
\]
disjoint from $A\cup R\cup\{1\}$, with no old--new protected differences and no new--new differences in $D\setminus\Diff(P)$. For the latter conditions impose the seed inequalities. These inequalities have degree at most two; for $(\sigma,\sigma')=(1,1)$
the left-hand side is the constant $p^{-1}q$. To invoke finite quadratic avoidance, evaluate the remaining conditions at $t=1$; the four sign choices give
\[
 \left.(tp)^{-\sigma}(tq)^{\sigma'}\right|_{t=1}
 =\begin{cases}
 p^{-1}q,&(\sigma,\sigma')=(1,1),\\
 pq,&(\sigma,\sigma')=(-1,1),\\
 p^{-1}q^{-1},&(\sigma,\sigma')=(1,-1),\\
 pq^{-1},&(\sigma,\sigma')=(-1,-1).
 \end{cases}
\]
Since $P=P^{-1}$, all four values belong to $\Diff(P)\cup\{1\}$ and therefore
differ from $d\in D\setminus\Diff(P)$. Lemma~\ref{lem:BP} therefore gives infinitely
many simultaneous solutions, after which freshness, avoidance of $R$, and the old--new conditions exclude only finitely many additional $t$: each failure says that $tp$ or $pt^{-1}$ lies in a finite set, or that $u^{-1}v=d\in D$ with one of $u,v$ old and the other in $Y$, an equation of degree one in $t$ with one solution for each choice of $p$, the old element and $d$. The same holds after adding finitely many further inequalities of degree at most two that hold at $t=1$, and $t\notin E$ for a finite $E$ excludes finitely many more $t$.
What matters for the target difference is that, if $p\ne q$, then $tp\ne tq$, and each intended difference $p^{-1}q$ gains the new representation $(tp)^{-1}(tq)$. For such $t$ a new representation of $d\in D$ is not old--new, so it is new--new; the seed inequalities exclude these for $d\notin\Diff(P)$, and the first entry of each determines the second. Hence $\nu_{A\cup Y}(d)=\nu_A(d)$ for $d\in D\setminus\Diff(P)$ and $\nu_{A\cup Y}(d)\le\nu_A(d)+|Y|$ for $d\in D$. Also $A\cup Y$ is disjoint from $R$, and $|Y|\le2|P|$. So a seed can be inserted again into the enlarged data, each insertion raising the count of its target by at least one.
\end{proof}

\begin{lemma}[Codegree recognition]\label{lem:recognition}
Let $H$ be a countably infinite group. If $U\subseteq H\setminus\{1\}$ is symmetric, $\nu_U(a)$ is finite and positive for every primary $a$, distinct primary pairs have distinct counts, and $\nu_U(b)=\infty$ for every secondary $b$, then $\Aut(\Cay(H,U))=\Xi(H)$ and $\nu_U(h)>0$ for all $h\ne1$.
\end{lemma}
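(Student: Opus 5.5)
Since $\Xi(H)\le\Aut(\Cay(H,U))$ by \eqref{eq:sandwich}, only the reverse inclusion needs proof. Let $F$ be any automorphism of $\Gamma=\Cay(H,U)$. The idea is to read the inverse pair $\invp{xy^{-1}}$ off $\Gamma$ from common-neighbour counts, so that $F$ must preserve it.

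\textbf{Step 1: counts are graph invariants.} By the common-neighbour formula, $\CN_\Gamma(x,y)=\nu_U(xy^{-1})$, and $F$ preserves $\CN_\Gamma$. Hence
\[
\nu_U(F(x)F(y)^{-1})=\nu_U(xy^{-1})\qquad\text{for all }x\ne y.
\]

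\textbf{Step 2: primary differences are preserved.} Suppose $xy^{-1}$ is primary. Then $\CN_\Gamma(x,y)$ is finite and positive. Secondary differences have infinite count, so $F(x)F(y)^{-1}$ is also primary. Distinct primary inverse pairs have distinct counts, so
\[
\invp{F(x)F(y)^{-1}}=\invp{xy^{-1}}.
\]

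\textbf{Step 3: secondary differences via two primary steps.} Let $b=xy^{-1}$ be secondary. By Lemma~\ref{lem:countable}, $b=a^2$ for some primary $a$. Put $z=a^{-1}x$. Then
\[
xz^{-1}=a \quad\text{and}\quad zy^{-1}=a^{-1}xy^{-1}=a^{-1}b=a,
\]
so both steps are primary. Write $x'=F(x)$, $y'=F(y)$, $z'=F(z)$. By Step~2, $x'z'^{-1}=a^{\sigma}$ and $z'y'^{-1}=a^{\tau}$ with $\sigma,\tau\in\{1,-1\}$. Therefore
\[
x'y'^{-1}=a^{\sigma}a^{\tau}\in\{a^2,a^{-2},1\}.
\]
The value $1$ is impossible because $F$ is injective. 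So $x'y'^{-1}\in\invp{b}$.

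Steps 2 and 3 together show $F\in\Xi(H)$. The genuine obstacle, the mixed-sign case $\sigma\ne\tau$, is eliminated by injectivity. The other point to watch is that Step~3 uses an intermediate vertex, and this is handled because Step~2 applies to every pair with primary difference.

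\textbf{Final claim: $\nu_U(h)>0$ for all $h\ne1$.} This is immediate from the hypotheses: primary $h$ has positive count, and secondary $h$ has infinite count.

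Countability of $H$ is not needed in this argument. It is relevant only in that finite distinct counts for countably many pairs is consistent.
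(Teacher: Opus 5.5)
Your proof is correct and matches the paper's argument: the finite, pairwise distinct codegrees identify each primary inverse pair, and a secondary $b=a^2$ is recovered by composing two primary $a$-steps through an intermediate vertex, with injectivity excluding the mixed-sign value $1$. Your intermediate vertex $z=a^{-1}x=ay$ is exactly the paper's $ax$ for the pair $(bx,x)$; the only cosmetic difference is that you cite \eqref{eq:sandwich} for $\Xi(H)\le\Aut(\Cay(H,U))$, whereas the paper verifies that inclusion directly.
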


\begin{proof}
The finite, distinct primary counts of $U$ identify each primary inverse pair, so every graph automorphism satisfies $F(ax)\in\{a,a^{-1}\}F(x)$ for primary $a$. It remains to recover a secondary pair from a primary square root. For $b=a^2\in B$, apply this at $x$ and $ax$ to obtain
\[
F(bx)F(x)^{-1}\in\{a^2,1,a^{-2}\}.
\]
Injectivity excludes $1$, leaving exactly the required inverse pair. All inverse-pair differences are therefore preserved, so $\Aut(\Cay(H,U))\subseteq\Xi(H)$. Conversely, if a permutation $F$ of $H$ satisfies $F(hx)\in\{h,h^{-1}\}F(x)$ for all $h,x\in H$, then $F(y)F(x)^{-1}\in\{yx^{-1},xy^{-1}\}$ for all $x,y$, and since $U=U^{-1}$ this lies in $U$ exactly when $yx^{-1}$ does; so $F$ keeps edges and nonedges, and $\Aut(\Cay(H,U))=\Xi(H)$. Every nonidentity difference has positive codegree, which supplies generation as well as the stated positivity.
\end{proof}

\subsection{Protected markers}\label{sec:robust}

\begin{definition}[Robust graphs]\label{def:robustgraph}
Throughout, $m\ge0$ denotes an integer. A \emph{cut} of a graph on $X$ is a partition
$X=Y\sqcup Z$ into two nonempty parts. A \emph{crossing neighbor} of a vertex
lies in the opposite part and is adjacent to it; a \emph{crossing nonneighbor}
lies in the opposite part and is not adjacent to it. The \emph{crossing degree} and
\emph{crossing nondegree} of a vertex are respectively the numbers of its crossing
neighbors and crossing nonneighbors. A cut in which every vertex has at most $m$
crossing neighbors is an \emph{$m$-cut}. A finite simple graph
with at least two vertices is $m$-robust if every cut has a vertex with more than $m$ crossing neighbors and a vertex with more than $m$ crossing nonneighbors. The vertices can differ.
\end{definition}

\begin{lemma}[Robust unions]\label{lem:robustunion}
Let $\Gamma$ be a graph on $X$. Call an edge of $\Gamma$ a \emph{robust-marker edge} if it lies in a finite induced $m$-robust subgraph, and let $M_m$ be the set of these edges.
\begin{enumerate}[label=(\roman*),nosep]
\item If $F_1,\ldots,F_r$ induce finite $m$-robust subgraphs and their intersection graph is connected, then the graph induced on $F_1\cup\cdots\cup F_r$ is $m$-robust. Here the intersection graph has vertices $1,\ldots,r$, with $j$ and $k$ adjacent when $F_j\cap F_k\ne\emptyset$.
\item The relation
\[
 x\sim_m y
 \quad\Longleftrightarrow\quad
 x=y\ \text{or some finite induced $m$-robust subgraph contains $x,y$}
\]
is an equivalence relation, and its classes are exactly the components of $(X,M_m)$.
\item If $X$ has at least two vertices, then $(X,M_m)$ is connected if and only if every finite vertex set is contained in one finite induced $m$-robust subgraph.
\item A finite graph is $m$-robust if and only if its complement is. Hence $\Gamma$ and its complement have the same finite induced $m$-robust subgraphs, as vertex sets, and the same relation $\sim_m$.
\end{enumerate}
\end{lemma}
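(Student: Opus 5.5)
The four parts are proved in the order (i), (ii), (iii), (iv). Part (i) carries the main content; the other parts follow from it or from the definition.

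\emph{Part (i).} We argue by induction on $r$. Since a connected finite graph has a vertex whose deletion leaves it connected (a leaf of a spanning tree), it suffices to treat two sets $F,F'$ with $F\cap F'\ne\emptyset$, each inducing an $m$-robust subgraph.

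Let $Y\sqcup Z$ be a cut of the graph induced on $F\cup F'$. The cut must split $F$ or $F'$. If both were unsplit, then $F$ and $F'$ would each lie in a single part. Since they share a vertex, they would lie in the same part, and the other part would be empty, which is impossible. So we may assume, say, that $F$ is split, giving the cut $(Y\cap F, Z\cap F)$ of the graph induced on $F$.

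Robustness of $F$ yields a vertex with more than $m$ crossing neighbors inside $F$, and a possibly different vertex with more than $m$ crossing nonneighbors inside $F$. The subgraph on $F$ is induced, so adjacency within $F$ is unchanged in the larger graph. Both vertices therefore keep at least these crossing neighbors and nonneighbors in the cut of $F\cup F'$. This proves (i). The point that needs care is the requirement that the cut be nontrivial on some $F_j$, and the shared vertex settles it.

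\emph{Part (ii).} Reflexivity and symmetry of $\sim_m$ are immediate. For transitivity, take $x,y\in F$ and $y,z\in F'$, where $F$ and $F'$ induce robust subgraphs. They share $y$, so (i) shows that $F\cup F'$ induces a robust subgraph, and it contains $x$ and $z$.

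To identify the classes with the components of $(X,M_m)$, we compare the two relations. Suppose $x\sim_m y$ with $x\ne y$, witnessed by a robust $F$. The graph induced on $F$ is connected: a cut with no crossing edges has no vertex with more than $m\ge 0$ crossing neighbors, contradicting robustness. Every edge of this graph lies in the robust subgraph on $F$, so it belongs to $M_m$. Hence $x$ and $y$ lie in the same component of $(X,M_m)$. Conversely, each edge of $M_m$ joins two $\sim_m$-related vertices, and transitivity extends this along paths.

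\emph{Part (iii).} First suppose $(X,M_m)$ is connected. Then $X$ is one $\sim_m$-class. Given a finite set of vertices, pad it to at least two vertices (possible because $|X|\ge2$) and fix one vertex $x_0$ in it. Each other vertex $v$ lies with $x_0$ in a robust set $F_v$. All these sets contain $x_0$, so their intersection graph is complete, and (i) shows that their union is a single robust set containing the original finite set. Conversely, if every finite set lies in a robust set, apply this to each pair $\{x,y\}$: every pair is $\sim_m$-related, so by (ii) the graph $(X,M_m)$ is connected.

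\emph{Part (iv).} A cut of a graph and the same cut of its complement exchange, at every vertex, the crossing neighbors with the crossing nonneighbors. The robustness condition is symmetric in these two counts, so a graph is robust exactly when its complement is. Induced subgraphs of the complement are the complements of induced subgraphs, so $\Gamma$ and $\overline\Gamma$ have the same robust vertex sets. Since $\sim_m$ is defined from those vertex sets alone, the two graphs have the same relation $\sim_m$.
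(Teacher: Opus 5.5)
Your proof is correct and follows the paper's argument in all four parts: a cut of the union must split some member, whose robustness witnesses survive because the subgraphs are induced; robust graphs are connected, so witnesses yield marker paths; unions through a common vertex give the finite witnesses in (iii); and complementation swaps crossing neighbors with crossing nonneighbors. The differences are cosmetic. In (i) you reduce to $r=2$ by removing a leaf of a spanning tree, where the paper argues directly from connectedness of the intersection graph, and in (iii) you use the transitivity from (ii) to choose witnesses that all contain $x_0$, rather than chaining markers along paths in $(X,M_m)$.
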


\begin{proof}
(i) For a cut of the union, either some $F_j$ is split, or no $F_j$ is split. In the first case, robustness of that $F_j$ supplies more than $m$ crossing neighbors and more than $m$ crossing nonneighbors; those witnesses remain valid in the larger union, since the subgraphs are induced. In the second case, intersecting members must lie on the same side. Connectedness of the intersection graph then forces the whole union onto one side, a contradiction.

(ii) Reflexivity and symmetry are clear. Transitivity follows by taking the union of two witnesses through the common vertex, which is $m$-robust by (i) with $r=2$. Every robust graph is connected: a component of a disconnected one and its complement would form a cut without crossing neighbors. Hence two vertices of one witness are joined by a path of robust-marker edges inside it; conversely, the endpoints of a robust-marker edge are $\sim_m$-related, so every component of $(X,M_m)$ lies in one class. Thus the classes are exactly the components.

(iii) If every finite vertex set lies in one finite induced $m$-robust subgraph, then so does every pair of vertices, and (ii) puts them in one component. Conversely, let $(X,M_m)$ be connected and let $A\subseteq X$ be finite; enlarging $A$, we may assume it has at least two points. Fix $a\in A$, connect $a$ to every other point of $A$ by a path in $(X,M_m)$, and choose for each edge of these paths a finite induced $m$-robust subgraph containing it. Markers of consecutive edges of a path meet, and the markers of the first edges all contain $a$, so the intersection graph of these finitely many markers is connected. By (i), the union of the markers along those paths is a finite induced $m$-robust subgraph containing $A$.

(iv) Robustness is invariant under complementation because crossing neighbors and crossing nonneighbors exchange roles. An induced subgraph of the complement is the complement of the induced subgraph, so the same finite witnesses and the same relation $\sim_m$ work in a graph and its complement, with the same $m$.
\end{proof}

\begin{lemma}[A robust four-block pattern]\label{lem:p4}
Partition a finite vertex set into four blocks $B_1,B_2,B_3,B_4$, each of size greater than $2m$. Put all edges between consecutive blocks and no edges between nonconsecutive blocks. Edges within blocks are arbitrary. The resulting graph is $m$-robust.
\end{lemma}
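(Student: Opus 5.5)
The plan is to prove the crossing-neighbor half of $m$-robustness directly, by a propagation argument along the path $B_1B_2B_3B_4$. The crossing-nonneighbor half then follows by applying the same argument to the complement, which has the same shape. The graph has at least four vertices, since each block is nonempty, so cuts exist and the definition applies.

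\emph{Crossing neighbors.} Fix a cut $X=Y\sqcup Z$ and suppose, for contradiction, that every vertex has at most $m$ crossing neighbors. Pick any vertex $v$, say $v\in B_i\cap Y$, and let $j$ be an index with $|i-j|=1$.
\begin{itemize}[nosep]
\item Every vertex of $B_j$ is adjacent to $v$. So $|B_j\cap Z|\le m$, and since $|B_j|>2m$ this gives $|B_j\cap Y|>m$.
\item Any vertex of $B_i$ lying in $Z$ would then have more than $m$ crossing neighbors, namely those in $B_j\cap Y$. Hence $B_i\subseteq Y$.
\item Any vertex of $B_j$ lying in $Z$ would then have all of $B_i$ as crossing neighbors, which is more than $2m\ge m$ vertices. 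Hence $B_j\subseteq Y$.
\end{itemize}
Thus whenever a block meets $Y$, that block and its consecutive neighbors lie entirely in $Y$. Iterating along the connected path $1\text{--}2\text{--}3\text{--}4$ forces $X\subseteq Y$, contradicting $Z\neq\varnothing$. So some vertex has more than $m$ crossing neighbors.

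\emph{Crossing nonneighbors.} In the complement graph, edges within blocks remain arbitrary. Between blocks, the complement joins exactly the nonconsecutive pairs $\{1,3\},\{1,4\},\{2,4\}$. These are the consecutive pairs of the ordering $B_2,B_4,B_1,B_3$, and the pairs that are nonconsecutive in this ordering, $\{2,1\},\{4,3\},\{2,3\}$, are exactly the original edges, now absent. So the complement is again a four-block pattern of the lemma's type, with blocks of size greater than $2m$.

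Applying the first part to the complement and the same cut gives a vertex with more than $m$ crossing neighbors in the complement, that is, more than $m$ crossing nonneighbors in the original graph. Alternatively, one can conclude via the complementation symmetry in Lemma~\ref{lem:robustunion}(iv) once both halves are phrased this way.

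The only delicate point is the propagation step: the threshold $2m$ is exactly what lets a single vertex on one side force a majority of each adjacent block onto that side.
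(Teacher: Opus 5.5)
Your proof is correct and follows essentially the same route as the paper: the paper first shows that no block can be split (a vertex on each side of a split block would bound both halves of a neighboring block by $m$), then uses connectivity of the block path, and treats crossing nonneighbors by noting that the complement is again a four-block path pattern. Your one-sided propagation (a single vertex forces a majority of the adjacent block, hence its own block and then the adjacent block, onto its side) is only a minor reorganization of the same counting with the threshold $2m$.
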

\begin{proof}
Suppose a cut has crossing degree at most $m$ everywhere. To obtain the required crossing witness, first observe that the cut cannot split a block: if it did, one vertex on each side would force each side of a neighboring block to have at most $m$ points, contradicting that block's size. No block is therefore split, and connectivity of the path on the four blocks forces some consecutive blocks onto opposite sides, giving crossing degree greater than $m$. For the crossing-nonneighbor witness, apply the same argument to the complement: the nonconsecutive-block graph is again a connected four-vertex path and has no isolated block. \end{proof}

\begin{figure}[!htbp]
\centering
\begin{tikzpicture}[x=1cm,y=1cm, every node/.style={font=\small}]
  \foreach \x/\name/\real in {0/B_1/C_1,3.05/B_2/P_1,6.1/B_3/C_3,9.15/B_4/P_2}{
    \draw[rounded corners=2pt] (\x,0) rectangle +(1.95,1.28);
    \node at (\x+.975,1.65) {$\name=\real$};
    \foreach \a/\b in {.42/.72,.82/.4,1.38/.8}{\fill (\x+\a,\b) circle (1.4pt);}
    \node at (\x+1.38,.34) {$\cdots$};
    \node at (\x+.975,-.43) {$|\name|>2m$};
  }
  \foreach \x in {1.95,5,8.05}{\draw[line width=1.2pt] (\x,.65)--+(1.1,0);}
  \node[font=\scriptsize,anchor=south west] at (3.48,.74) {$a$};
  \node[font=\scriptsize,anchor=south west] at (6.53,.74) {$1$};
  \node[font=\footnotesize] at (5.55,-1.02) {Solid links mean complete joins; every other cross-block pair is a nonedge.};
\end{tikzpicture}
\caption{The four-block robust marker. Edges within each block are arbitrary. In the protected-marker construction, $1\in C_3$ and the marked generator lies in $P_1$.}
\label{fig:robust}
\end{figure}

\begin{lemma}[Protected star insertion]\label{lem:star}
Let $A,R\subseteq H\setminus\{1\}$ be finite, symmetric, and disjoint in an infinite group $H$, and let $D=D^{-1}\subseteq H\setminus\{1\}$ be finite. Suppose $P=P^{-1}\subseteq A$ is nonempty,
\[
 \Diff(P)\cap D=\varnothing,
\]
and $Z$ is a finite set disjoint from $P$, possibly containing $1$. One can choose a new vertex $t$, outside any prescribed finite vertex set, and set
\[
 Y=tP\cup(tP)^{-1},\qquad
 A'=A\cup Y,\qquad
 R'=R\cup\bigcup_{z\in Z}\invp{tz^{-1}},
\]
so that $Y\cap(A\cup R\cup\{1\})=\varnothing$, $A'\cap R'=\varnothing$, and
\[
 \nu_{A'}(d)=\nu_A(d)\qquad\text{for all }d\in D.
\]
In this partial Cayley specification, $t$ is adjacent to $P$ and nonadjacent to $Z$.
\end{lemma}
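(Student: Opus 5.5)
The plan is to obtain Lemma~\ref{lem:star} as an application of Lemma~\ref{lem:seedinsert}, with the given $P$ playing the role of the seed, followed by a finite list of extra exclusions on $t$. Since $\Diff(P)\cap D=\varnothing$, we have $D\setminus\Diff(P)=D$. So the count conclusion of Lemma~\ref{lem:seedinsert} already gives $\nu_{A\cup Y}(d)=\nu_A(d)$ for every $d\in D$. The only difference from that lemma is that here $P\subseteq A$, so old--new representations must be ruled out more carefully.

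Concretely, we apply Lemma~\ref{lem:seedinsert} with the same $A,R,D,P$. We enlarge the excluded finite set $E$ by the prescribed finite vertex set, and by the finitely many $t$ violating the additional conditions listed below. Lemma~\ref{lem:seedinsert} then supplies $t$ satisfying the seed inequalities and the freshness conditions: $Y\cap(A\cup R\cup\{1\})=\varnothing$ and no old--new representation of $D$. New--new representations of $d\in D$ are excluded by the seed inequalities, because no $d$ lies in $\Diff(P)$. Hence all counts on $D$ are unchanged.

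It remains to arrange $A'\cap R'=\varnothing$. We already know $A'\cap R=\varnothing$, so we need $A'\cap\invp{tz^{-1}}=\varnothing$ for each $z\in Z$. This has two parts.
\begin{itemize}[nosep]
\item For $A$: the condition $tz^{-1}\notin A\cup A^{-1}$ is a degree-one condition in $t$, and it excludes finitely many $t$.
\item For $Y$: we need $tz^{-1}\ne tp$, $tz^{-1}\ne pt^{-1}$, and $tz^{-1}\ne$ the inverses of these, for all $p\in P$, $z\in Z$. The equation $tz^{-1}=tp$ means $z^{-1}=p$, i.e.\ $z=p^{-1}\in P$, which is impossible since $Z\cap P=\varnothing$ and $P=P^{-1}$. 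Likewise $zt^{-1}=pt^{-1}$ forces $z=p$, again impossible. The mixed equations $tz^{-1}=pt^{-1}$ and $zt^{-1}=tp$ are genuine degree-two equations in $t$. I would add the inequalities $tz^{-1}tp^{-1}\ne1$ and their analogues as "further inequalities of degree at most two" in Lemma~\ref{lem:seedinsert}. At $t=1$ they read $z^{-1}p^{-1}\ne1$, which holds because $z\ne p^{-1}$, so they are admissible.
\end{itemize}

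Finally, we check the adjacency claims in the partial specification. Adjacency of $t$ to $p\in P$ means $tp^{-1}\in A'$, and $tp^{-1}\in tP\subseteq Y$ because $P$ is symmetric. Nonadjacency to $z$ means $tz^{-1}\in R'$, which holds by construction.

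The main obstacle I expect is this bookkeeping of the mixed quadratic conditions separating $R'$ from $Y$. Each one must hold at $t=1$, and this is exactly where the hypotheses $Z\cap P=\varnothing$ and $P=P^{-1}$ are used. One may also want $1\in Z$ to be harmless: then $\invp{t}\subseteq R'$ and we need $t\notin Y$. This amounts to $1\notin P$ and $t^2\ne p$ type conditions, which again hold at $t=1$ because $1\notin P$, the latter following from $Z\cap P=\varnothing$.
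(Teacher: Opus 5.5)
Your proposal is correct and follows the paper's proof essentially step for step. You apply Lemma~\ref{lem:seedinsert} with the same $A,R,D,P$, using $D\setminus\Diff(P)=D$, and exclude the finitely many $t$ with $tz^{-1}\in A$. You then note that $tz^{-1}\ne tp$ because $P=P^{-1}$ and $Z\cap P=\varnothing$, and impose the degree-two inequalities $tz^{-1}tp\ne1$, which hold at $t=1$, as the further inequalities of that lemma. Two remarks are unnecessary, though harmless: your separate checks of $zt^{-1}$ against $Y$ are redundant since $Y=Y^{-1}$, and $P\subseteq A$ needs no extra care for old--new representations, because the freshness conditions of Lemma~\ref{lem:seedinsert} already exclude them.
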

\begin{proof}
Use Lemma~\ref{lem:seedinsert} with the given $A$, $R$, $D$ and $P$. The aim is to add the prescribed star while leaving every protected codegree unchanged. Since $\Diff(P)\cap D=\varnothing$, all new--new representations of differences in $D$ can be prohibited, and all old--new representations are excluded by finitely many additional values of $t$. The seed inequalities of that lemma hold at $t=1$.
For each $z\in Z$, putting $tz^{-1}$ outside old $A$ excludes finitely many $t$. We must also keep each reserved nonedge away from the new selected block. It cannot equal $tp$ with $p\in P$, since $P=P^{-1}$ and $Z\cap P=\varnothing$. Equality to $(tp)^{-1}$ is equivalent to
\[
 tz^{-1}tp=1.
\]
Its negation is also true at $t=1$, since $z\ne p$. Lemma~\ref{lem:seedinsert}, applied with these further inequalities and with $E$ the prescribed finite vertex set together with the finitely many $t$ excluded above, gives $t$ satisfying them, the seed inequalities and the freshness conditions. The resulting assignments are inverse-compatible, so the displayed extensions have the stated properties and the protected counts are preserved, by that lemma with $D\setminus\Diff(P)=D$.
\end{proof}

\begin{lemma}[Protected finite marker]\label{lem:markerprescription}
Let $A,R\subseteq H\setminus\{1\}$ be finite, symmetric, and disjoint in an infinite group $H$. Let $D=D^{-1}\subseteq H\setminus\{1\}$ be finite. If $a\in A$ and $a^2\notin D$, there are finite symmetric extensions $A\subseteq A'$, $R\subseteq R'$ retaining disjointness, preserving every $\nu_A(d)$ for $d\in D$, and prescribing a finite induced $m$-robust subgraph containing the edge $\{1,a\}$. Every member of $(A'\setminus A)\cup\invp{a}$ is an oriented edge difference in that marker. Every later completion containing $A'$ and avoiding $R'$ retains its robustness.
\end{lemma}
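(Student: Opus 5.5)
We build the marker as an instance of the four-block pattern of Lemma~\ref{lem:p4}, adding vertices one at a time with Lemma~\ref{lem:star}. Following Figure~\ref{fig:robust}, we aim for blocks $C_1,P_1,C_3,P_2$, each of size $2m+1$, with $1\in C_3$ and $a\in P_1$. Cross-block adjacency must be complete between consecutive blocks and empty between nonconsecutive ones. Edges within blocks are arbitrary, so they may be left to whatever the construction produces. This matters because the edge $\{1,a\}$ is the prescribed join between $P_1$ and $C_3$, and $a\in A$ is already selected.

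\textbf{Order of insertion.} Start with $C_3=\{1\}$ and $P_1=\{a\}$. Each new vertex $t$ is adjacent in the Cayley graph to a vertex $x$ exactly when $tx^{-1}\in A'$. So making $t$ adjacent to a prescribed finite set $Q$ of existing marker vertices and nonadjacent to a finite set $Z$ means selecting the block $tQ^{-1}\cup Qt^{-1}$ and reserving $\invp{tz^{-1}}$ for $z\in Z$. This is Lemma~\ref{lem:star} with $P=Q^{-1}\cup Q$ after right-translating. To apply it with the new vertex adjacent to exactly the required earlier vertices, I would instead phrase the star condition directly: choose $t$ so that the selected set becomes $A\cup\{tq^{-1},qt^{-1}:q\in Q\}$, and rerun the proof of Lemma~\ref{lem:star}/\ref{lem:seedinsert} with this block.

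The schedule is as follows.
\begin{itemize}[nosep]
\item First add the remaining $2m$ vertices of $C_3$, each adjacent to $a$ and nonadjacent to the rest of $P_1$.
\item Then add the $2m$ remaining vertices of $P_1$, adjacent to all of $C_3$.
\item Then add $C_1$, adjacent to all of $P_1$ and nonadjacent to $C_3$.
\item Finally add $P_2$, adjacent to $C_3$ and nonadjacent to $C_1\cup P_1$.
\end{itemize}
Within-block pairs are left unprescribed: they can go either way, and later completions may decide them. Every cross-block pair is then either selected or reserved, so each is fixed in any completion containing $A'$ and avoiding $R'$. Lemma~\ref{lem:p4} then gives robustness of the induced marker in every such completion, which is the last assertion of the statement.

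\textbf{Protected codegrees.} The key hypothesis at each step is that the new differences do not hit $D$. New--new representations in the block $\{tq^{-1},qt^{-1}\}$ have the form $(tq^{-1})^{-\sigma}(tq'^{-1})^{\sigma'}$. These are degree-at-most-two inequalities in $t$. At $t=1$ they evaluate to products $q^{\pm1}q'^{\pm1}$ of earlier marker vertices, so they can be imposed in Lemma~\ref{lem:BP} only if these values avoid $D$. This is the main obstacle. For instance, with $Q=\{1,a\}$ the evaluation gives $a^{\pm1}$ and $a^{\pm2}$, which explains the hypothesis $a^2\notin D$. We also need $a\notin D$, or else we must allow the count of $a$ to be preserved. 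Since $a$ is already in $A$, the old representation $(1,a)$ is not new, and the new--new term with value $a$ must be excluded. I would handle this by putting $\invp{a}$ outside the protected set: remove it from $D$ if present, noting that the statement is presumably applied with $a$ not protected.

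For later vertices, whose $Q$ contains earlier vertices $t'$, the evaluations at $t=1$ are words in earlier $t'$'s. I would add to the earlier choices of $t'$ the inequalities ensuring that all such products avoid $D\cup\{1\}$. These are again finitely many degree-two conditions on each $t'$ that hold at $1$, because products of $1$ and $a$ are in $\{1,a^{\pm1},a^{\pm2}\}$. Alternatively, choose $t$ generic so that $tq^{-1}$ with distinct $q$ gives evaluations equal to the fixed differences $q^{-1}q'$ only. The old--new conditions and disjointness from $A\cup R\cup\{1\}$ exclude finitely many $t$, as in Lemma~\ref{lem:seedinsert}, and the reservations are handled as in Lemma~\ref{lem:star}. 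Finally, every element of $A'\setminus A$ has the form $tq^{-1}$ or its inverse, which is an oriented edge difference of the marker, and $\invp{a}$ is the difference of the edge $\{1,a\}$.
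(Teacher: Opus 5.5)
\textbf{Gap: the case $a\in D$.} Your plan fills the four-block path $C_1,P_1,C_3,P_2$ one vertex at a time, giving each new vertex $t$ an arbitrary prescribed neighbor set $Q$ and non-neighbor set $Z$. That plan can be made to work, but as written it does not prove the stated lemma. You handle $a\in D$ by deleting $\invp a$ from the protected set, which proves a weaker statement. The lemma assumes only $a^2\notin D$, and Theorem~\ref{thm:countable} applies it to every member of a freshly inserted seed block, with nothing forcing that member out of $D$.

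The obstacle comes from your own analysis, not from the construction. The only representations that no choice of $t$ avoids are the constant ones $(tq^{-1})^{-1}(tq'^{-1})=qq'^{-1}$ with $q\ne q'$ in $Q$: the new vertex is then a common neighbor of $q$ and $q'$. In your schedule the two initial vertices $1$ and $a$ never lie in a common $Q$, since no block of the path is consecutive to both $C_3$ and $P_1$. Hence every constant value, and every value at $t=1$ of a nonconstant condition with $q\ne q'$, involves a vertex chosen earlier in the construction. Such a value is of degree one in the later of $q,q'$, so a finite exclusion at that earlier choice keeps it out of $D$, including out of $\invp a$ when $a\in D$. Examples are $t_i^{\pm1},\,t_i^{\pm1}t_j^{\pm1}\notin D$ for $C_3$-vertices and $a^{\pm1}s_k^{\pm1},\,s_k^{\pm1}a^{\pm1}\notin D$ for $P_1$-vertices.

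Degree-one conditions need not hold at the identity, exactly as for the old--new conditions in Lemma~\ref{lem:seedinsert}. Only genuinely quadratic conditions need truth at $1$. Their values there are $a^{\pm2}$, covered by the hypothesis, or squares $q^{\pm2}$ of earlier new vertices, which a Lemma~\ref{lem:BP} condition at that earlier choice keeps out of $D$. The missing idea is this separation of finite exclusions from avoidance conditions, not a shrinking of $D$.

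\textbf{Smaller repairs and comparison.} Your remark that the auxiliary conditions (products avoiding $D\cup\{1\}$) hold at the identity fails for $t'^2\ne1$ if squares are included. That condition is false at $t'=1$ and impossible in a Boolean group; you only need $t'^{\pm2}\notin D$ and distinctness.

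Lemma~\ref{lem:star} also does not literally cover your step. It requires $P=P^{-1}$, and taking $P=Q\cup Q^{-1}$ would join $t$ to $Q^{-1}$ as well. If you rerun it with a nonsymmetric $Q$, the reservation inequality $tz^{-1}\ne qt^{-1}$, i.e.\ $q^{-1}tz^{-1}t\ne1$, reduces at $t=1$ to $z\ne q^{-1}$. So you need $Z\cap Q^{-1}=\varnothing$, for instance $t_i\ne a^{-1}$ and $s_k^{-1}\notin C_3$, again arranged by finite exclusions at earlier choices.

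The paper avoids this bookkeeping. It first grows symmetric pools $P_1\supseteq\invp a$ and $P_2$ inside the selected set, so $1$ is automatically joined to both. The hypothesis $a^2\notin D$ enters only through $\Diff(P_1)=\invp{a^2}$, and $\Diff(P_1\cup P_2)\cap D=\varnothing$ is maintained while all $P_1$--$P_2$ pairs are reserved. Then $C_1$ and $C_3\ni1$ are attached by Lemma~\ref{lem:star} verbatim, whose hypotheses $P=P^{-1}$ and $Z\cap P=\varnothing$ make every seed and reservation inequality true at $t=1$.
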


Plan: grow two symmetric pools $P_1\supseteq\invp{a}$ and $P_2$ inside $A''$, with every cross-pair reserved as a nonedge, then attach the star blocks $C_1$ and $C_3$ by Lemma~\ref{lem:star}, so that the four blocks have the cross-pattern of Lemma~\ref{lem:p4}.
\begin{proof}
Put $L=2m+1$. Start a symmetric pool $P_1$ with $\invp{a}$; its nonidentity differences lie in $\invp{a^2}$ and therefore avoid $D$. Grow it to size $L$ or $L+1$, and build a disjoint symmetric pool $P_2$ of size $L$ or $L+1$, selecting each new inverse pair $\invp{x}$ into the current $A$ and outside the current $R$ (the given $R$ and the pairs reserved at earlier insertions into $P_2$); this excludes only finitely many more values of $x$.
At each pool insertion prohibit new representations of $D$. Differences between a new point $x^{\pm1}$ and an old selected point exclude finitely many values of $x$. Differences within $\invp{x}$ are controlled by $x^2\notin D$; these are quadratic inequalities true at $x=1$. For $P_2$, also prescribe all cross-pairs with $P_1$ as nonedges. The reserved inverse pairs $\invp{xp^{-1}}$ and $\invp{x^{-1}p^{-1}}$ must avoid
the newly selected pair $\invp{x}$. Their coincidences with $\invp{x}$ reduce to $p=1$
or to
\[
 xp^{-1}x=1\quad\hbox{or}\quad x^{-1}p^{-1}x^{-1}=1,
 \qquad p\in P_1.
\]
Both equations are false at $x=1$. Lemma~\ref{lem:BP} therefore supplies each pool choice together with the square inequalities. The reserved pairs must also avoid the current selected set $A_{\mathrm{cur}}$ (the given $A$, $P_1$ and the earlier points of $P_2$); as $A_{\mathrm{cur}}$ is symmetric, this is the finite exclusion $x\notin A_{\mathrm{cur}}p\cup p^{-1}A_{\mathrm{cur}}$ for $p\in P_1$, which the infinitely many solutions of Lemma~\ref{lem:BP} absorb. Reserve all cross-differences and their inverses so that the two pools remain nonadjacent in every later completion. No protected codegree has changed, and
\[
 \Diff(P_1\cup P_2)\cap D=\varnothing.
\]

With the two pools prepared, apply Lemma~\ref{lem:star} to add $L$ distinct vertices $C_1$, each adjacent to $P_1$ and nonadjacent to $P_2\cup\{1\}$. Next let $C_3$ consist of $1$ and $L-1$ new vertices, each adjacent to $P_1\cup P_2$ and nonadjacent to $C_1$. Exclude previously used marker vertices at each choice, while protecting the same fixed $D$ in every insertion. The four disjoint blocks
\[
 C_1,\quad P_1,\quad C_3,\quad P_2
\]
have the cross-pattern in Figure~\ref{fig:robust}, so Lemma~\ref{lem:p4} proves robustness independently of within-block edges. The edge $\{1,a\}$ is in this marker. We also need the marker construction to close under the new selected differences: every newly selected pool element occurs on its edge to $1$, and every star element $tp$ occurs on the edge from $t$ to $p^{-1}$ in the same symmetric pool; inverse differences use the reverse orientations. No new selected difference therefore creates a further marking obligation, and all later assignments retain the prescribed cross-edges and cross-nonedges.
\end{proof}

\subsection{The countable theorem}

\begin{theorem}[Countable realization with finite prescriptions]\label{thm:countable}
For every countably infinite group $H$, every finite disjoint symmetric prescription $A_0,R_0\subseteq H\setminus\{1\}$, and every fixed $m\ge0$, there is a closed Cantor family of symmetric sets $U_b$ satisfying that prescription. Both their Cayley graphs and their complements realize $\Xi(H)$, have diameter two, and contain every finite vertex set in a finite induced $m$-robust subgraph. The graphs are pairwise nonisomorphic, as are their complements. Every finite vertex set has infinitely many common neighbors in each complementary graph. Applied with $A_0$ and $R_0$ interchanged, the sets $S_b=(H\setminus\{1\})\setminus U_b$ form a closed Cantor family satisfying the prescription $A_0,R_0$, with $\Cay(H,S_b)$ the complement of $\Cay(H,U_b)$; so every finite vertex set has infinitely many common neighbors in each $\Cay(H,S_b)$.
\end{theorem}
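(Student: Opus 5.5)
We construct $U_b$ for each $b\in\{0,1\}^{\N}$ by a single finite-stage schedule that depends continuously on $b$. The aim is to verify the hypotheses of Lemma~\ref{lem:recognition} together with the marker and diameter conditions.

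Enumerate the primary inverse pairs as $\invp{a_0},\invp{a_1},\dots$ using Lemma~\ref{lem:countable}. Also enumerate all nonidentity pairs, all pairs of distinct vertices, and all finite vertex sets. At stage $n$ we hold finite disjoint symmetric sets $A_n\supseteq A_0$ and $R_n\supseteq R_0$, a finite protected set $D_n$ consisting of the primary pairs whose counts are already frozen, and frozen values $c_0<c_1<\dots<c_{n-1}$. Stage $n$ performs four tasks, always protecting $D_n$.
\begin{enumerate}[label=(\alph*),nosep]
\item Secondary and deferred targets. For the next secondary pair in a fair enumeration (each is visited infinitely often), insert its seed by Lemma~\ref{lem:seedinsert}. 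Since its $\Diff(P)$ is the pair itself, this leaves the counts in $D_n$ unchanged.
\item Raising the next primary count. Insert seeds of $\invp{a_n}$ until $\nu(a_n)>c_{n-1}$, and then perform one extra insertion if $b(n)=1$.
\item Markers. Cover every newly selected element by Lemma~\ref{lem:markerprescription}. This covering must precede the freezing in (d), because Lemma~\ref{lem:markerprescription} only preserves the counts in the protected set.
\item Freezing. Freeze $c_n=\nu(a_n)$ and add $\invp{a_n}$ to $D$.
\end{enumerate}

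Two points need care. First, the seed of a primary nonsquare $a$ creates $a^2\in B$, and marker insertion needs $a^2\notin D$. Both hold because $D$ contains only primary pairs. When $a$ is a primary square, $\Diff(P)=\invp{a}$, so (b) creates nothing new. Second, the counts of later primaries can be raised arbitrarily, which keeps the frozen counts strictly increasing.

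For diameter two and common neighbours in both the graph and its complement, we handle at each stage one pair $x\ne y$ together with one finite set $F$. We choose a fresh vertex $t$ adjacent to both $x$ and $y$ by a star insertion (Lemma~\ref{lem:star}) with $P=\{x\cdots\}$, suitably translated. The star lemma is phrased only with $1$ and symmetric $P$, so we first translate to root at $x$. Symmetrically, we reserve the relevant differences in $R$ to obtain a common nonneighbour of $F$. Avoidance (Lemma~\ref{lem:BP}) leaves infinitely many choices of $t$ at each step. Protection of $D_n$ again costs only finitely many exclusions.

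We then put $U_b=\bigcup_n A_n$. Every element outside this union lies in $R$ or is unassigned; we decide the unassigned elements as nonedges, although they could equally be forced into $R$ via an enumeration. With this choice:
\begin{enumerate}[label=(\roman*),nosep]
\item every primary count is finite and equal to $c_n$, because it is frozen after stage $n$;
\item every secondary count is infinite, by fairness of (a);
\item the counts are distinct.
\end{enumerate}
Lemma~\ref{lem:recognition} therefore gives $\Aut=\Xi(H)$.

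Complements also realize $\Xi(H)$, since complementation preserves the automorphism group. Every finite set lies in a robust subgraph: every edge is covered by a marker that stays robust in every completion, so Lemma~\ref{lem:robustunion}(iii) applies once connectivity of $M_m$ is secured. That connectivity follows because every selected element lies on a marker edge and $U_b$ generates. Lemma~\ref{lem:robustunion}(iv) then transfers the property to complements.

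For nonisomorphism, any isomorphism of Cayley graphs preserves the multiset of finite codegree values. The sequence $(c_n)$ is strictly increasing, so the set of finite codegrees determines $(c_n)$. The sequence $(c_n)$ in turn determines $b$, provided the raise to "just above $c_{n-1}$" is computed intrinsically from earlier data. To secure this we make the schedule deterministic in $b$, using fixed enumerations and the least admissible $t$. Then $c_n-1-\text{(the deterministic value without the extra insertion)}$ recovers $b(n)$.

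The family $b\mapsto U_b$ is continuous, since each inverse pair's membership is decided at a finite stage depending on finitely many bits. It is also injective, so its image is a closed Cantor set. The final sentence of the statement follows by rerunning the construction with $A_0$ and $R_0$ swapped and taking complements.

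The main obstacle is the bookkeeping of the decoding step. The value $\nu(a_n)$ before the extra insertion depends on earlier bits through the intervening insertions. Decoding $b(n)$ from the frozen counts alone therefore requires recovering the whole deterministic run inductively from $c_0,\dots,c_{n-1}$. The first thing to try is exactly that induction, with the parity of the number of insertions serving as the recorded bit.
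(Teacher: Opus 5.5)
Your architecture matches the paper's: protected seed insertions, protected markers, strictly increasing frozen primary counts, fair secondary saturation, reserved complement witnesses, then Lemmas~\ref{lem:recognition} and~\ref{lem:robustunion}. However, your stage order breaks the nonisomorphism argument, and this is a genuine gap rather than bookkeeping. In (b) you let $b(n)$ decide one extra seed insertion, and only afterwards, in (c), do you mark all the new block points while $\invp{a_n}$ is still unprotected. The pool and star choices of Lemma~\ref{lem:markerprescription} protect only $D$, so they may add further representations of $a_n$.

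Let $N$ be the count reached before the bit is consulted. The branch $b(n)=0$ then freezes $N+\delta_0$, and the branch $b(n)=1$ freezes at least $N+1+\delta_1$. Here $\delta_0,\delta_1\ge0$ come from marker runs over different selected sets, hence from different least admissible choices. Nothing excludes equality of the two frozen values. After that the runs diverge, with no guarantee that their spectra ever differ. For the same reason your formula $c_n-1-N$ is false; Lemma~\ref{lem:seedinsert} also guarantees only an increase of at least one per seed. The parity of the number of insertions is invisible in the codegrees, so the inductive decoding you propose cannot be carried out.

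The missing idea is to make each insertion atomic, as the paper does. A seed insertion is followed at once by the markers of its block, protecting only the previously frozen $D$. Read the threshold count $N_n$ only after such complete marked insertions; it depends only on $b(0),\dots,b(n-1)$. If $b(n)=0$, freeze exactly $N_n$. If $b(n)=1$, perform one more complete marked insertion and freeze a value $>N_n$. No decoding is then needed. If $b\ne b'$ first differ at $n$, their spectra agree below $n$, and those common entries are all $<N_n$. The value $N_n$ then lies in one spectrum and not in the other, whose entries from index $n$ on exceed $N_n$.

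Three smaller points also need repair.
\begin{itemize}
\item Your diameter step is inconsistent with freezing. A fresh common neighbour $t$ of $x,y$ means selecting $tx^{-1}$ and $ty^{-1}$, which creates the new representation $(tx^{-1})^{-1}(ty^{-1})=xy^{-1}$. When $xy^{-1}\in D$ this changes a frozen count, so it is not "finitely many exclusions". Drop the step: positive primary and infinite secondary codegrees already give every pair a common neighbour.
\item The marker hypothesis concerns the new block points $w$, so what you need is $w^2\notin D$. This does not follow from $D$ consisting of primary pairs. It follows because the seed inequalities forbid the new--new representation $(w^{-1},w)$, since $\Diff(P)\cap D=\varnothing$.
\item Continuity of $b\mapsto U_b$ requires every coordinate to be decided at a finite stage. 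Forbidding each still-unselected $\invp{g_i}$ at stage $i$ is therefore necessary, not optional.
\end{itemize}
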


The proof builds $U_b$ by finite marked insertions, freezing the common-neighbor count of the $i$-th primary pair at a value $M_i$ that the bit $b_i$ raises by one further insertion or leaves unchanged; the strictly increasing sequence $(M_i)$ then separates the graphs.

\begin{proof}
\emph{A single finite operation.} Maintain finite symmetric selected and forbidden sets, written $A_n,R_n$ after $n$ finite operations, so that $A_0,R_0$ is the prescription; they stay disjoint and only grow. Maintain also a finite symmetric set $D$ consisting only of primary pairs whose counts have already been frozen, and the marker invariant of Lemma~\ref{lem:markerprescription}: every selected difference is covered by a protected finite induced $m$-robust marker.

For an uncoded primary target or a secondary target, its seed $P$ satisfies
\[
\Diff(P)\cap D=\varnothing,
\]
since $\Diff(P)$ lies in the target pair and the secondary pairs (Lemma~\ref{lem:countable}). Lemma~\ref{lem:seedinsert} therefore inserts a fresh symmetric block $Y$ while preserving every count indexed by $D$. For each $w\in Y$, also $w^2\notin D$: otherwise $(w^{-1},w)$ would be a prohibited new--new representation of a member of $D$. Apply Lemma~\ref{lem:markerprescription} to each member of this finite $Y$, keeping $D$ fixed throughout. Its closure property covers every difference newly selected by the marker itself, so this creates no recursive marking obligation.

Call the resulting finite operation a \emph{marked insertion}. It preserves all frozen counts and all earlier markers, and increases the target count by at least one. Its size need not be bounded. Initially $D=\varnothing$; applying Lemma~\ref{lem:markerprescription} to the members of $A_0$ establishes the marker invariant.

\emph{Coding without windows.} Enumerate the primary inverse pairs as $\invp{a_i}$, $i\in\N$. Fix enumerations of all choices used in the construction and always take the first admissible choice. For a binary sequence $b$, at stage $i$:
\begin{enumerate}[label=(\arabic*),nosep]
\item Without consulting $b_i$, make marked insertions for $a_i$ until its current count $N_i$ is positive and greater than every previously frozen count.
\item If $b_i=0$, freeze $M_i=N_i$.
\item If $b_i=1$, make one further complete marked insertion for $a_i$, and freeze its resulting count $M_i>N_i$.
\item Add $\invp{a_i}$ to $D$.
\end{enumerate}
All these operations terminate because each insertion raises the target count and the threshold is finite. Every branch produces
\[
0<M_0<M_1<M_2<\cdots,\qquad \nu_{U_b}(a_i)=M_i.
\]
After coding at each stage, interleave the following three finite tasks.
\begin{itemize}[nosep]
\item Saturate secondary pairs using a fixed schedule in which every secondary pair occurs infinitely often. Index the secondary list by an initial segment of $\N$, allowing the list to be finite or empty.
\item Reserve complement witnesses. Schedule every finite $F\subseteq H$ infinitely often. Choose a fresh $z\notin F$ with $zf^{-1}\notin A_n$ for all $f\in F$, and put all these inverse pairs into $R_{n+1}$. Only finitely many $z$ are excluded. Excluding the finitely many witnesses used earlier makes the witnesses distinct.
\item Enumerate all nonidentity inverse pairs $\invp{g_i}$. At the end of stage $i$, forbid $\invp{g_i}$ if it has not yet been selected. This decides that connection-set coordinate permanently.
\end{itemize}
The extra forbidden pairs do not obstruct later work: both seed insertion and protected marking allow arbitrary finite forbidden data.

\emph{Verification.} Let $U_b=\bigcup_nA_n$; it contains $A_0$ and is disjoint from $R_0$. Every primary count remains exactly its frozen finite value, since marked insertions preserve the counts indexed by $D$, and every secondary count becomes infinite, since each secondary pair receives infinitely many marked insertions. Every representation in the union already occurs at a finite stage, so no additional primary representation appears only at the limit. Lemma~\ref{lem:recognition} gives
\[
\Aut(\Cay(H,U_b))=\Xi(H),\qquad \nu_{U_b}(h)>0\quad(h\ne1).
\]
The marked edges consequently connect $H$. By Lemma~\ref{lem:robustunion}, every finite set of vertices is contained in a finite induced $m$-robust subgraph, and this assertion also holds in the complement. The reserved witnesses give every finite vertex set infinitely many common neighbors in the complement.

To prove nonisomorphism, use the increasing spectrum of finite common-neighbor counts on distinct vertex pairs. It is exactly $(M_0,M_1,\ldots)$. If two binary sequences first differ at $i$, their construction histories before that choice agree, including $N_i$ and all earlier frozen counts. One spectrum has its $i$-th entry $N_i$, while the other has a larger $i$-th entry. The value $N_i$ cannot appear later in the second spectrum because that spectrum is strictly increasing, and cannot appear earlier because the earlier entries coincide. The graphs are therefore nonisomorphic. The same follows for their complements.

The decision of coordinate $\invp{g_i}$ depends only on the first $i+1$ input bits. Thus $b\mapsto U_b$ is continuous. It is injective, so compactness of $\{0,1\}^{\N}$ and the Hausdorff property of the connection-set space $\mathcal X_H$ make it an embedding with closed image.

For diameter, the uncomplemented graphs have positive codegree at every nonidentity difference; the complemented graphs have the stronger reserved-witness property. Both connection sets are nonempty: the first follows from positive codegrees, and the second from the reserved witnesses. Hence neither graph is complete, and the diameter bounds are exactly two.

For the sets $S_b$, run the construction with the selected and forbidden prescriptions reversed and set $S_b=(H\setminus\{1\})\setminus U_b$. Complementation is a homeomorphism and preserves both automorphism groups and isomorphism distinctions.
\end{proof}

\section{Hierarchy recovery and finite packets}\label{sec:recovery}
We begin with the reconstruction principle for arbitrary graphs. Group multiplication appears only later, when the group constructions are used to realize its hypotheses.

\begin{definition}[Continuous uniformly branching hierarchy]
Let $X$ be a nonempty set. A hierarchy is a family $(E_\alpha)_{\alpha\le\theta}$ of equivalence relations with:
\begin{enumerate}[label=(\roman*),nosep]
\item $E_\alpha\subseteq E_\beta$ for $\alpha<\beta$, and $E_\theta=X\times X$;
\item $E_\lambda=\bigcup_{\alpha<\lambda}E_\alpha$ at every nonzero limit;
\item every $E_{\alpha+1}$-class contains at least two $E_\alpha$-classes.
\end{enumerate}
For $x,y$ in different $E_0$-classes, let $r(x,y)$ be the unique $\alpha$ with $(x,y)\in E_{\alpha+1}\setminus E_\alpha$. Choose $\eps_\alpha\in\{0,1\}$ for $\alpha<\theta$, with $\eps_{\alpha+1}=1-\eps_\alpha$ whenever both indices occur.
\end{definition}
Uniform branching is needed for the witnesses used below; merely requiring $E_\alpha\ne E_{\alpha+1}$ somewhere does not provide enough local branching.

\begin{definition}[Layer errors]
For the integer $m\ge0$ of Definition~\ref{def:robustgraph} and a graph $\Gamma$ on $X$, put
\[
 \mathcal L_\alpha=\{\{x,y\}:(x,y)\in E_{\alpha+1}\setminus E_\alpha,
                 \ \mathbf1_{E(\Gamma)}(\{x,y\})\ne\eps_\alpha\}.
\]
The error condition is $\maxdeg(X,\mathcal L_\alpha)\le m$ for each $\alpha$.
When an unordered edge set is intersected with $E_{\alpha+1}\setminus E_\alpha$,
we mean its pairs whose two endpoints are in that equivalence-relation layer. This is a bound per vertex in the \emph{whole layer}, rather than separately in each pair of initial classes. Total error degree over all levels need not be bounded.
\end{definition}

\begin{lemma}[Twin classes of the baseline]\label{lem:twins}
Let $(E_\alpha)_{\alpha\le\theta}$ be a continuous uniformly branching hierarchy on $X$ with successor-alternating bits $(\eps_\alpha)_{\alpha<\theta}$, and let $\alpha<\theta$. The \emph{baseline graph} on $X/E_\alpha$ joins distinct classes $P,Q$ exactly when $\eps_{r(x,y)}=1$ for $x\in P$ and $y\in Q$; this does not depend on the choice of $x$ and $y$. For a graph $J$, let $\operatorname{Tw}(J)$ be equality together with the twin relation: distinct vertices have the same adjacency to every third vertex, ignoring their mutual adjacency. In the baseline graph on $X/E_\alpha$, the twin classes are exactly $E_{\alpha+1}/E_\alpha$; that is, $\operatorname{Tw}$ of this graph is the relation induced by $E_{\alpha+1}$ on $X/E_\alpha$.
\end{lemma}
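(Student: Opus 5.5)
First I check that the baseline graph is well defined. Take distinct classes $P,Q\in X/E_\alpha$, and let $x,x'\in P$ and $y,y'\in Q$. Since $(x,y)\notin E_\alpha$, the index $r(x,y)$ is at least $\alpha$. The relations $E_\beta$ with $\beta>\alpha$ are equivalence relations containing $E_\alpha$. So $(x,y)\in E_\beta$ if and only if $(x',y')\in E_\beta$, for every $\beta>\alpha$. Membership in $E_\alpha$ agrees as well, since both pairs lie outside it. Hence $r(x,y)=r(x',y')$, and I may write $r(P,Q)\ge\alpha$ for this common value. The same computation gives an ultrametric inequality: for classes $P,Q,T$ I get $r(P,T)\le\max(r(P,Q),r(Q,T))$, with equality whenever $r(P,Q)\neq r(Q,T)$.

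\emph{Pairs in one $E_{\alpha+1}$-class are twins.} Suppose $P\ne Q$ lie in one $E_{\alpha+1}$-class, so $r(P,Q)=\alpha$, and let $T$ be any third class. If $T$ lies in the same $E_{\alpha+1}$-class, then $r(P,T)=r(Q,T)=\alpha$. Otherwise $r(P,T)>\alpha=r(P,Q)$, and the ultrametric equality gives $r(Q,T)=r(P,T)$. In both cases $T$ sees $P$ and $Q$ with the same bit, so $P$ and $Q$ are twins.

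\emph{Pairs in different $E_{\alpha+1}$-classes are not twins.} Suppose $\beta=r(P,Q)>\alpha$. I need a third class $T$ with $r(P,T)\ne r(Q,T)$ and with $\eps$ differing at these two indices. I take $T\ne P$ inside the $E_{\alpha+1}$-class of $P$; uniform branching (iii) guarantees that such a $T$ exists. Then $r(P,T)=\alpha$, and by the ultrametric equality $r(Q,T)=\beta$. So $T$ distinguishes $P$ from $Q$ whenever $\eps_\alpha\neq\eps_\beta$.

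The main obstacle is the case $\eps_\alpha=\eps_\beta$, where I must use alternation and work one level higher. If $\beta=\alpha+1$, alternation already gives $\eps_\beta\ne\eps_\alpha$, and the choice of $T$ above suffices. In general I will choose the separating level so that some pair of classes is separated at exactly the successor of a level where the bits alternate. For $\beta>\alpha+1$, I take $T$ inside the $E_{\alpha+2}$-class of $P$ but outside its $E_{\alpha+1}$-class; such a $T$ exists by branching at level $\alpha+1$. This requires $\alpha+1<\beta$, which holds in this case. Then $r(P,T)=\alpha+1$ and $r(Q,T)=\beta$. Together with the previous witness, which gives $r(P,T')=\alpha$ and $r(Q,T')=\beta$, the bits $\eps_\alpha$ and $\eps_{\alpha+1}$ are different by alternation, so one of them differs from $\eps_\beta$. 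The corresponding witness, $T'$ or $T$, separates $P$ from $Q$. Limit indices need no separate treatment, because only $\alpha$ and $\alpha+1$ are used and both of these bits exist. Hence $\operatorname{Tw}$ of the baseline graph is exactly the relation induced by $E_{\alpha+1}$ on $X/E_\alpha$.
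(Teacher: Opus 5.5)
Your proof is correct and follows essentially the same route as the paper. Both arguments make the rank between distinct $E_\alpha$-classes well defined by nesting and show that two classes in one $E_{\alpha+1}$-class are twins. For classes at mutual rank $\beta>\alpha$, both use uniform branching to pick a third class at rank $\gamma\in\{\alpha,\alpha+1\}$ from the first class, and successor alternation guarantees that one of these choices has $\eps_\gamma\ne\eps_\beta$. Your explicit ultrametric formulation is a repackaging of the paper's nesting argument.
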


\begin{proof}
If $(x,x')\in E_\alpha$ and $r(x,y)=\beta$ with $(x,y)\notin E_\alpha$, then $\beta\ge\alpha$, and nesting gives $(x',y)\in E_{\beta+1}\setminus E_\beta$; so the rank of two distinct $E_\alpha$-classes is well defined, and so is the baseline graph. Classes in one parent have identical adjacency to every third class: their ranks to a third class are both $\alpha$ when it lies in the same parent, and otherwise agree by the same nesting argument applied to $E_{\alpha+1}$. For classes in different parents, let their mutual rank be $\beta>\alpha$. Choose $\gamma=\alpha$ if $\eps_\alpha\ne\eps_\beta$, and $\gamma=\alpha+1$ otherwise. In the latter case successor alternation gives $\gamma<\beta$ and $\eps_\gamma\ne\eps_\beta$. Uniform branching supplies a third class at rank $\gamma$ from the first and rank $\beta$ from the second, distinguishing them. Indeed, for $x$ in the first class, the $E_{\gamma+1}$-class of $x$ contains a second $E_\gamma$-class, so some $z$ has $r(x,z)=\gamma$; since $\gamma+1\le\beta$, nesting gives $r(y,z)=\beta$ for $y$ in the second class. As $\gamma,\beta\ge\alpha$, the class of $z$ is a third class, and its adjacencies to the two classes are $\eps_\gamma\ne\eps_\beta$.
\end{proof}

\begin{theorem}[Intrinsic recovery without a group]\label{thm:recovery}
Fix a nonnegative integer $m$. Suppose $\Gamma$ has a hierarchy as above, every $E_0$-class has more than $2m$ vertices, and:
\begin{enumerate}[label=(\roman*),nosep]
\item $\maxdeg(X,\mathcal L_\alpha)\le m$ for every $\alpha$;
\item inside every $E_0$-class, the edges belonging to finite induced $m$-robust subgraphs of $\Gamma$ form a connected spanning graph.
\end{enumerate}
Then adjacency alone, with the fixed parameter $m$, determines $E_0$, which is the relation $\sim_m$ of Lemma~\ref{lem:robustunion}, every $E_\alpha$, the bits $\eps_\alpha$, the ordinal height $\theta$, and every error relation $\mathcal L_\alpha$. The recovery is canonical and ordinal-indexed: if $f:\Gamma\to\Gamma'$ is an isomorphism between two such graphs (with the same parameter $m$), then $(f\times f)[E_\alpha]=E'_\alpha$ at every ordinal $\alpha$. In particular, their heights and corresponding baseline bits agree.
\end{theorem}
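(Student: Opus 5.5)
The proof runs by transfinite recursion on $\alpha$. At each stage we define $E_\alpha$ from adjacency alone, together with $m$ and the previously recovered relations, and the definitions are invariant under isomorphism. We use only the hypotheses, the robustness calculus of Lemma~\ref{lem:robustunion}, and the twin computation of Lemma~\ref{lem:twins}.

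\emph{Step 1: $E_0=\sim_m$.} Hypothesis (ii) and Lemma~\ref{lem:robustunion}(ii) show that each $E_0$-class lies in one $\sim_m$-class. Conversely, let $F$ induce a finite $m$-robust subgraph meeting two $E_0$-classes. Let $\beta$ be the largest $\alpha$ such that $F$ meets two $E_\alpha$-classes inside one $E_{\beta+1}$-class. Equivalently, $\beta$ is the maximum of $r$ over pairs in $F$; it exists because $F$ is finite and the hierarchy is continuous, so each rank is a successor-layer value. Fix one $E_{\beta+1}$-class $C$ containing at least two $E_\beta$-classes met by $F$. Cut $F$ by putting one $E_\beta$-class on one side and everything else on the other. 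The cut must respect $E_{\beta+1}$ in the right way: pairs of $F$ in different $E_{\beta+1}$-classes have rank greater than $\beta$, which is impossible by maximality. So every crossing pair lies in the layer $E_{\beta+1}\setminus E_\beta$, and its adjacency equals $\eps_\beta$ except along $\mathcal L_\beta$. Hypothesis (i) then bounds every crossing degree by $m$ when $\eps_\beta=0$, and every crossing nondegree by $m$ when $\eps_\beta=1$. Either bound contradicts robustness, so $\sim_m\subseteq E_0$.

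\emph{Step 2: successor stages.} Suppose $E_\alpha$ is recovered and every $E_\alpha$-class is infinite, or at least has more than $2m$ points; this holds since $E_0$-classes do and classes only grow. Define a graph $\Delta_\alpha$ on $X/E_\alpha$. Join $P$ and $Q$ if some (equivalently every) $x\in P$ has at most $m$ nonneighbors in $Q$. Declare them nonadjacent if some $x$ has at most $m$ neighbors in $Q$. By (i), exactly one alternative holds, and the size bound $|Q|>2m$ makes the alternatives disjoint. So $\Delta_\alpha$ is precisely the baseline graph of Lemma~\ref{lem:twins}. That lemma gives $E_{\alpha+1}=\operatorname{Tw}(\Delta_\alpha)$, pulled back to $X$. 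The bit $\eps_\alpha$ is the adjacency in $\Delta_\alpha$ between two $E_\alpha$-classes inside one $E_{\alpha+1}$-class; uniform branching guarantees such a pair exists. Then $\mathcal L_\alpha$ is read off directly from its definition.

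\emph{Step 3: limits and termination.} At a limit $\lambda$, put $E_\lambda=\bigcup_{\alpha<\lambda}E_\alpha$, by continuity. The height $\theta$ is the least stage with $E_\theta=X\times X$; uniform branching makes the sequence strictly increasing until then, so this stage is well defined.

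\emph{Step 4: isomorphism invariance.} Each step uses only adjacency, $m$, and earlier stages. Transfinite induction therefore gives $(f\times f)[E_\alpha]=E'_\alpha$ for every $\alpha$. Matching the first stage at which the full relation appears gives equal heights, and $f$ carries the $\Delta_\alpha$ to the $\Delta'_\alpha$, so the bits agree.

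\textbf{Main obstacle.} The main obstacle is Step 1: choosing the cut of a robust $F$ correctly. One must pick the maximal rank $\beta$ realized inside $F$, and isolate a single $E_\beta$-class within one $E_{\beta+1}$-class so that every crossing pair lies in the single layer $\beta$. Once all crossing pairs share one layer, hypothesis (i) applies and robustness fails. Step 2 is then routine, given Lemma~\ref{lem:twins} and the class-size bound.
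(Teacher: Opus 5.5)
Your proof is correct and follows the paper's argument. It uses the maximal-rank cut to show that no finite induced $m$-robust subgraph meets two $E_0$-classes, gets $E_0={\sim_m}$ from hypothesis (ii), recovers $E_{\alpha+1}$ as twin classes of the baseline with unions at limits, and reads off bits and error relations; canonicity follows by induction. The only variation is that you apply the at-most-$m$ neighbor/nonneighbor test directly in $\Gamma$ between $E_\alpha$-classes at every level, which is valid because the rank between two $E_\alpha$-classes is constant and each such class has more than $2m$ points; the paper performs that test once on $X/E_0$ and obtains each $\Delta_\alpha$ as the quotient $\Delta_0/\widehat E_\alpha^0$.
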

Here recovery refers to the transfinite procedure constructed in the proof from
$(\Gamma,m)$ itself; the hierarchy is not supplied as additional input.

The plan: no finite induced $m$-robust subgraph crosses an $E_0$-class, so $E_0$ is $\sim_m$; a majority test then reads off the baseline $\Delta_0$, and twin relations of its quotients (Lemma~\ref{lem:twins}) give each $E_{\alpha+1}$, with unions at limits.
\begin{proof}
First, let $V$ be the vertex set of a finite induced $m$-robust subgraph. We must show that such a marker cannot cross initial classes, since its components will be used to recover those classes. Fix one of its vertices $x_0$ and take the largest rank $\beta$ among its exterior comparisons with $x_0$. Then all its vertices lie in one $E_{\beta+1}$-class, while at least two $E_\beta$-classes occur. Consider the cut of the finite vertex set $V$ into $V\cap Y$ and $V\setminus Y$, where $Y$ is one $E_\beta$-class meeting $V$. Every crossing pair has rank $\beta$, so each vertex has at most $m$ deviations from the constant bit $\eps_\beta$. If that bit is zero, every crossing degree is at most $m$; if it is one, every crossing nondegree is at most $m$. Either alternative contradicts robustness, proving the required containment.

By Lemma~\ref{lem:robustunion}(ii), the relation $\sim_m$, defined from adjacency by finite witnesses, has as its classes the components of $(X,M_m)$, where $M_m$ is the set of robust-marker edges of $\Gamma$. The containment just proved puts every such component inside one $E_0$-class, and condition (ii) makes every $E_0$-class connected in $(X,M_m)$. Hence the recovered initial relation $\widehat E_0={\sim_m}$ equals $E_0$; This completes the first recovery step.

For distinct initial classes $B,C$ and $x\in B$, the rank to all points of $C$
is constant. According to the bit, there are at most $m$ neighbors in $C$ or
at most $m$ nonneighbors, and $|C|>2m$ makes these alternatives disjoint. This separation lets us recover the baseline $\Delta_0$ on
$Q_0=X/\widehat E_0$ by the majority test
\[
 \{B,C\}\in E(\Delta_0)
 \quad\Longleftrightarrow\quad
 |C\setminus N_\Gamma(x)|\le m\qquad\text{for any }x\in B,\ \text{where }B\ne C,
\]
where $N_\Gamma(x)$ is the neighbor set of $x$. Its value is independent of
$x\in B$ and symmetric in $B,C$, because it is $\eps_{r(x,y)}$ for
any $y\in C$; thus $\Delta_0$ is the baseline graph on $X/E_0$ in the sense of Lemma~\ref{lem:twins}. Every higher level is recovered from this baseline, not from $\Gamma$, whose layers carry errors.

The higher levels follow one recurrence: twin quotients of the recovered baseline at successors and unions at limits, with $\operatorname{Tw}$ as in Lemma~\ref{lem:twins}. If $\widehat E_\alpha\supseteq\widehat E_0$ is already
recovered, let $q_\alpha:X\to X/\widehat E_\alpha$ be the quotient map, and let
$\widehat E_\alpha^0$ be its induced relation on $Q_0$:
\[
 B\,\widehat E_\alpha^0\,C
 \quad\Longleftrightarrow\quad
 (x,y)\in\widehat E_\alpha\qquad\text{for all }x\in B,\ y\in C.
\]
Starting with $\widehat E_0$ and $\Delta_0$ above, set
\[
\begin{aligned}
 \Delta_\alpha&=\Delta_0/\widehat E_\alpha^0,
       &&\text{on }X/\widehat E_\alpha,\\
 \widehat E_{\alpha+1}&=(q_\alpha\times q_\alpha)^{-1}
                   \bigl(\operatorname{Tw}(\Delta_\alpha)\bigr),
       &&\text{at successors},\\
 \widehat E_\lambda&=\bigcup_{\alpha<\lambda}\widehat E_\alpha,
       &&\text{at nonzero limits},\\
 \widehat\theta&=\min\{\alpha:\widehat E_\alpha=X\times X\}.
\end{aligned}
\]
Loops are discarded in the quotient graph, and the successor relation is pulled
back to $X$ rather than left on a changing quotient vertex set. We prove by transfinite induction on $\alpha\le\theta$ that every displayed quotient is well defined and that $\widehat E_\alpha=E_\alpha$; the case $\alpha=0$ is the first recovery step. If $\widehat E_\alpha=E_\alpha$, then adjacency in $\Delta_0$ between $E_0$-classes in distinct $E_\alpha$-classes depends only on those $E_\alpha$-classes, since nesting makes their pair rank constant. Hence $\Delta_\alpha$ is well defined and is the baseline graph on $X/E_\alpha$, and for $\alpha<\theta$ Lemma~\ref{lem:twins} gives $\operatorname{Tw}(\Delta_\alpha)=E_{\alpha+1}/E_\alpha$, so $\widehat E_{\alpha+1}=E_{\alpha+1}$. At a nonzero limit $\lambda\le\theta$, the displayed union and continuity give $\widehat E_\lambda=E_\lambda$. This completes the transfinite induction, giving
$\widehat E_\alpha=E_\alpha$ at every stage; strict branching prevents
premature stabilization, so $\widehat\theta=\theta$. With the hierarchy in hand, for each $\alpha<\theta$ choose two distinct $E_\alpha$-classes inside one $E_{\alpha+1}$-class; uniform branching ensures that such classes exist. Their adjacency in the recovered baseline quotient $\Delta_\alpha$ is $\eps_\alpha$. The displayed definition of $\mathcal L_\alpha$, using the original adjacency of $\Gamma$, then recovers the error relation. Finally, let $f:\Gamma\to\Gamma'$ be an isomorphism. It carries finite induced $m$-robust subgraphs onto finite induced $m$-robust subgraphs, so $(f\times f)[\widehat E_0]=\widehat E'_0$, and it preserves the majority test, so it induces an isomorphism $\Delta_0\to\Delta'_0$. Since isomorphisms preserve twin relations and the recurrence takes unions at limits, induction on $\alpha$ gives $(f\times f)[\widehat E_\alpha]=\widehat E'_\alpha$ at every ordinal $\alpha$, with the same index on both sides. As $\widehat E_\alpha=E_\alpha$ and $\widehat E'_\alpha=E'_\alpha$, this is $(f\times f)[E_\alpha]=E'_\alpha$ at every ordinal $\alpha$; hence the heights agree, and $f$ induces isomorphisms $\Delta_\alpha\to\Delta'_\alpha$ carrying two classes inside one $E_{\alpha+1}$-class to two classes inside one $E'_{\alpha+1}$-class, so the bits read off above agree. This proves the asserted canonicity.
\end{proof}

\begin{remark}[Scope of the hypotheses]
No Cayley action, group operation, normal subgroup, transitivity, countability, or equality of class sizes was used. What remains essential is the continuous ordinal hierarchy, uniform branching, alternating bits, a fixed finite layer-error budget, and connected robust markers. These hypotheses restrict the class of hierarchies and perturbations to which the theorem applies.
\end{remark}

\begin{proposition}[Explicit non-Cayley examples at every ordinal height]\label{prop:ordinalexamples}
For every ordinal $\theta$ and every successor-alternating bit sequence $(\eps_\alpha)_{\alpha<\theta}$, there is a graph with an intrinsically recoverable hierarchy of height exactly $\theta$ and those bits. The graph is not vertex-transitive. If $\theta$ is infinite, its vertex set has cardinality $|\theta|$.
\end{proposition}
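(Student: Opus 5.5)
The plan is to build the example as a product of a Boolean coordinate space with one fixed finite rigid marker, with no layer errors at all, and then apply Theorem~\ref{thm:recovery} with $m=0$.

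\emph{The marker.} Take $K$ to be the path $k_1k_2k_3k_4$ on four vertices. With $m=0$ and blocks $B_j=\{k_j\}$, each of size $1>2m$, Lemma~\ref{lem:p4} shows that $K$ is $0$-robust. $K$ is not vertex-transitive, since its endpoints have degree one and its inner vertices have degree two.

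\emph{The graph.} Put $X=\F_2^{(\theta)}\times K$. For $\alpha\le\theta$, declare $(v,k)\,E_\alpha\,(w,l)$ if and only if $\operatorname{supp}(v-w)\subseteq\alpha$. The hierarchy axioms are checked as follows.
\begin{itemize}[nosep]
\item Nesting is clear, and $E_\theta=X\times X$.
\item Continuity at a limit $\lambda$ holds because supports are finite, so $\operatorname{supp}(v-w)\subseteq\lambda$ forces $\operatorname{supp}(v-w)\subseteq\alpha$ for some $\alpha<\lambda$.
\item Uniform branching holds because each $E_{\alpha+1}$-class contains the two $E_\alpha$-classes obtained by toggling coordinate $\alpha$.
\end{itemize}
The $E_0$-classes are the fibres $\{v\}\times K$, which have $4>2m$ vertices. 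For distinct classes the rank is $r=\max\operatorname{supp}(v-w)$. Define adjacency as follows:
\begin{itemize}[nosep]
\item inside each fibre, use the edges of $K$;
\item between $(v,k)$ and $(w,l)$ with $v\ne w$, put an edge exactly when $\eps_{\max\operatorname{supp}(v-w)}=1$.
\end{itemize}

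\emph{Verifying the hypotheses.} Every $\mathcal L_\alpha$ is empty, so condition (i) of Theorem~\ref{thm:recovery} holds with $m=0$. Each fibre induces a copy of $K$, so it is a finite induced $0$-robust subgraph, and its edges already form a connected spanning graph of the fibre; this is condition (ii). Theorem~\ref{thm:recovery} therefore recovers the hierarchy intrinsically, with height exactly $\theta$ and the prescribed bits. The degenerate case $\theta=0$ is just $X=K$. For cardinality, if $\theta$ is infinite then $|\F_2^{(\theta)}|=|[\theta]^{<\omega}|=|\theta|$, and multiplying by the finite set $K$ does not change it.

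\emph{Non-transitivity.} This is the only point that needs a little care. By the canonicity clause of Theorem~\ref{thm:recovery}, applied to $f=g$ and $\Gamma'=\Gamma$, every automorphism $g$ of the graph preserves $E_0$. Hence $g$ maps each fibre onto a fibre, and its restriction is an isomorphism between the induced copies of $K$. Such an isomorphism preserves degree inside $K$, so it cannot send $(0,k_1)$ to $(0,k_2)$. The graph is therefore not vertex-transitive.

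I expect no real obstacle. The main things to watch are that the rank $\max\operatorname{supp}(v-w)$ is well defined, which follows from finite support, and that $0$-robustness of a single path is a legitimate instance of Lemma~\ref{lem:p4} with singleton blocks.
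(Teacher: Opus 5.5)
Your proposal is correct and is essentially the paper's proof: the same product $\F_2^{(\theta)}\times V(J)$ with cross-fibre adjacency $\eps_{\max\operatorname{supp}(v+w)}$ and empty layer errors, an application of Theorem~\ref{thm:recovery}, and non-transitivity read off from internal degrees inside a recovered initial class. The only difference is the parameter choice. The paper takes $m=1$ with blocks of three independent vertices, giving twelve-vertex classes with internal degrees $3$ and $6$. Your choice of $m=0$ with the bare path $P_4$ is equally legitimate, since Theorem~\ref{thm:recovery} and Lemma~\ref{lem:p4} both allow $m=0$ and singleton blocks.
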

\begin{proof}
Use the four-block path $J$ with three vertices in each block, no edges within a block, all edges between consecutive blocks, and no other edges. Lemma~\ref{lem:p4}, with $m=1$, shows that $J$ is 1-robust, providing the marker needed in each initial class.

Put $V=\F_2^{(\theta)}$ and $X=V\times V(J)$. Between distinct such sets with coordinates $v,w$, give every pair adjacency $\eps_{\max\operatorname{supp}(v+w)}$. Define
\[
 (v,i)\ E_\alpha\ (w,j)\iff
 \operatorname{supp}(v+w)\subseteq\alpha.
\]
The resulting hierarchy is continuous, each successor class merges exactly two previous classes, and its layer errors are empty. Its initial classes have twelve vertices and connected robust-marker graphs, so Theorem~\ref{thm:recovery} recovers its height and bits. To see that the example is not vertex-transitive, look inside an intrinsically recovered initial class: vertices in the end blocks of $J$ have internal degree three, while vertices in its middle blocks have internal degree six. An automorphism cannot exchange those types, proving nontransitivity. The cardinality assertion follows from finite support.
\end{proof}
The construction therefore gives an explicit application outside the class of Cayley graphs.

\subsection{A quantitative margin for further edits}
\begin{lemma}[Robustness loses at most the edit degree]\label{lem:margin}
If a finite graph $F$ is $(q+d)$-robust and the symmetric difference between its edge set and that of $F'$ has maximum degree at most $d$, then $F'$ is $q$-robust.
\end{lemma}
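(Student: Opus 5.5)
We work directly from Definition~\ref{def:robustgraph}. Both $F$ and $F'$ live on the same finite vertex set $X$, which has at least two vertices, so $F'$ qualifies as a candidate robust graph. Fix an arbitrary cut $X=Y\sqcup Z$. We must produce, for $F'$, one vertex with more than $q$ crossing neighbors and one vertex with more than $q$ crossing nonneighbors. The vertices may differ.

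Apply $(q+d)$-robustness of $F$ to the same cut. This gives a vertex $x$ with more than $q+d$ crossing neighbors in $F$, and a vertex $y$ with more than $q+d$ crossing nonneighbors in $F$.

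Now pass from $F$ to $F'$. The crossing neighbors of $x$ in $F'$ contain the crossing neighbors of $x$ in $F$, except those $z$ for which $\{x,z\}$ lies in the symmetric difference of the edge sets. By the degree hypothesis, $x$ meets at most $d$ edges of that symmetric difference. Hence its $F'$-crossing degree is at least $(q+d+1)-d=q+1>q$.

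The identical count applies to $y$: each crossing nonneighbor in $F$ stays a crossing nonneighbor in $F'$ unless the pair is in the symmetric difference, and at most $d$ such pairs meet $y$. Alternatively, this case follows from the first by complementation, as in Lemma~\ref{lem:robustunion}(iv). Complementing both graphs leaves the symmetric difference unchanged and swaps neighbors with nonneighbors.

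Since the cut was arbitrary, $F'$ is $q$-robust. There is no real obstacle here. The only point to state explicitly is that the symmetric-difference bound is a per-vertex bound, so it controls the change at the specific witnesses $x$ and $y$, rather than a total count.
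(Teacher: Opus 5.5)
Your proof is correct and is essentially the paper's argument. You apply $(q+d)$-robustness of $F$ to the given cut, and the per-vertex bound $d$ on the symmetric difference then lowers each witness's crossing degree, and its crossing nondegree, by at most $d$.
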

\begin{proof}
Across any cut, editing changes both the crossing-neighbor count and the crossing-nonneighbor count of a fixed vertex by at most $d$. Applying this loss bound to the two witnesses in the robustness condition leaves the required witness of each type in place.
\end{proof}

\begin{corollary}[Hierarchy recovery under layerwise bounded edits]\label{cor:edits}
Let $m,d$ be nonnegative integers. Suppose the hierarchy and graph have layer-error budget $m$, every initial class has more than $2(m+d)$ vertices, and its internal marker-edge relation is connected using $(m+2d)$-robust finite patterns. Let $\mathcal T$ be a set of unordered pairs to be toggled. Assume that $\mathcal T$ has maximum degree at most $d$ inside each initial class and, separately, that
\[
\maxdeg\bigl(X,\mathcal T\cap(E_{\alpha+1}\setminus E_\alpha)\bigr)\le d
\qquad\text{for all }\alpha<\theta,
\]
where equivalence-relation differences are interpreted as sets of unordered distinct pairs. The edited graph intrinsically recovers the original hierarchy with threshold $m+d$, including its original baseline bits. The recovered error relations are those of the edited graph.
\end{corollary}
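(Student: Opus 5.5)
The plan is to verify that the edited graph $\Gamma'$ satisfies the hypotheses of Theorem~\ref{thm:recovery} with parameter $m+d$ and the \emph{same} hierarchy and bits $(E_\alpha,\eps_\alpha)$. The recovery procedure applied to $(\Gamma',m+d)$ then returns $E_\alpha$, $\eps_\alpha$, $\theta$, together with the error relations $\mathcal L'_\alpha$ of $\Gamma'$ computed against the original bits. There are three hypotheses to check: the class-size bound, the layer-error budget, and connectivity of internal robust markers.

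\emph{Class sizes.} This is immediate, since every $E_0$-class has more than $2(m+d)$ vertices by assumption.

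\emph{Error budget.} For each $\alpha<\theta$, the new error relation is
\[
\mathcal L'_\alpha=\mathcal L_\alpha\,\triangle\,\bigl(\mathcal T\cap(E_{\alpha+1}\setminus E_\alpha)\bigr),
\]
because toggling a pair in layer $\alpha$ flips whether it deviates from $\eps_\alpha$. A vertex's degree in a symmetric difference is at most the sum of its degrees in the two relations. The assumed layerwise bound on $\mathcal T$ therefore gives $\maxdeg(X,\mathcal L'_\alpha)\le m+d$, which is hypothesis (i) with threshold $m+d$.

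\emph{Markers.} Fix an $E_0$-class $C$. Every marker edge of $\Gamma$ inside $C$ lies in a finite induced $(m+2d)$-robust subgraph $F\subseteq C$. In $\Gamma'$, the induced subgraph on the same vertex set differs from $F$ only by pairs of $\mathcal T$ within $C$, which have maximum degree at most $d$. Lemma~\ref{lem:margin} with $q=m+d$ then shows that $F$ is $(m+d)$-robust in $\Gamma'$.

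One subtlety needs care. The same edge set need not be present in $\Gamma'$, because an edge $\{x,y\}$ of $F$ may have been toggled away. I would therefore pass through the equivalence relation rather than the edges. By Lemma~\ref{lem:robustunion}(ii), $x\sim_{m+d}y$ in $\Gamma'$ whenever $x,y$ lie in a common robust witness. Chaining along a path of old marker edges shows that all of $C$ lies in one $\sim_{m+d}$-class of $\Gamma'$. By Lemma~\ref{lem:robustunion}(ii) again, that class is a component of the new marker-edge graph $(X,M'_{m+d})$. Its intersection with $C$ is therefore connected, provided the new markers do not leave $C$.

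Ruling out markers that leave $C$ is exactly the containment argument in the proof of Theorem~\ref{thm:recovery}: it uses only the budget $m+d$ established above. In fact that step does not even require hypothesis (ii) to be phrased edgewise; containment plus the $\sim_{m+d}$ chaining yields $\widehat E_0=E_0$ directly.

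\emph{Conclusion.} With the hypotheses in place, the majority test recovers the unperturbed baseline $\Delta_0$. The margin $|C|>2(m+d)$ keeps the ``at most $m+d$ neighbors'' and ``at most $m+d$ nonneighbors'' alternatives disjoint. The twin-quotient recurrence then proceeds verbatim and returns the original $E_\alpha$ and $\eps_\alpha$. Reading the error relations against $\Gamma'$ gives $\mathcal L'_\alpha$.

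The main obstacle is the marker step just described: ensuring that connectivity of the \emph{new} marker-edge graph inside each class survives even though individual marker edges may be deleted. I expect to handle it by the $\sim_{m+d}$ reformulation above, which replaces edge connectivity by the equivalence-class statement of Lemma~\ref{lem:robustunion}(ii).
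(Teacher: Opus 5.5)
Your proposal is correct and follows the paper's proof. You reapply Theorem~\ref{thm:recovery} to the same hierarchy and bits at threshold $m+d$. You bound each new layer-error relation as a symmetric difference of relations of degrees at most $m$ and $d$. You then use Lemma~\ref{lem:margin} to keep each old marker vertex set $(m+d)$-robust after editing.

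Your detour through $\sim_{m+d}$ and the containment argument is valid but not needed. The edited graph induced on an old marker set $F\subseteq C$ is robust, hence connected, so its edges are new marker edges lying inside $C$ that reconnect the endpoints of any deleted old marker edge; this is exactly how the paper handles that subtlety.
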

\begin{proof}
We check exactly the two hypotheses needed to reapply the recovery theorem. In each exterior layer the new error relation is the symmetric difference of two graphs of maximum degrees at most $m$ and $d$, so its degree is at most $m+d$. Inside each initial class, an old marker vertex set induces an $(m+d)$-robust graph after editing, by Lemma~\ref{lem:margin}, and is therefore connected. Even when an old marker edge is deleted, its endpoints remain connected inside that marker vertex set. Since the old marker-edge relation spanned and connected the class, the surviving marker connections still span and connect it. Theorem~\ref{thm:recovery} now applies to the same hierarchy and bits with the new threshold.
\end{proof}
The stability condition is layerwise: the total edit degree over all layers may be infinite. A globally bounded-degree edit is only a special case, not an additional requirement.

\begin{remark}[Not resilience of the GRR]
The corollary preserves the hierarchy, not its original packets or its full automorphism group. In a one-layer Cayley construction, filling in all finitely many missing packet directions is a bounded-degree edit. The result may be the complete join of two core graphs, allowing their translation actions to vary independently. Losing the pinning information can enlarge the automorphism group even while the core classes remain recoverable. Nor does this give recovery after an unrestricted positive fraction of adversarial edits.
\end{remark}

\begin{proposition}[Complexity of robustness]\label{prop:robust-conp}
For each fixed integer $m\ge 1$, deciding whether a finite graph is $m$-robust is coNP-complete. For $m=0$, it is decidable in polynomial time.
\end{proposition}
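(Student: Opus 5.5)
The plan is to treat the three assertions separately: membership in coNP, the polynomial case $m=0$, and coNP-hardness for each fixed $m\ge1$. For membership, I would note that a finite graph $F$ with at least two vertices fails to be $m$-robust exactly when some cut has \emph{every} crossing degree at most $m$, or some cut has \emph{every} crossing nondegree at most $m$. The second alternative is an $m$-cut of the complement, consistent with Lemma~\ref{lem:robustunion}(iv). Such a cut is a polynomial-size certificate, and it can be checked in polynomial time by computing all crossing degrees and nondegrees. Graphs with fewer than two vertices are rejected trivially. Hence non-robustness is in NP and robustness is in coNP.

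For $m=0$, a $0$-cut of $F$ is a cut with no crossing edges, which exists exactly when $F$ is disconnected. A cut with no crossing nonedges is a $0$-cut of the complement $\overline F$. So $F$ is $0$-robust if and only if $F$ and $\overline F$ are both connected, and two graph searches decide this in polynomial time.

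For hardness with fixed $m\ge1$, I would reduce from the problem of deciding whether a graph has a $d$-cut, that is, a cut in which every vertex has at most $d$ crossing neighbours, with $d=m$. For $m=1$ this is the matching-cut problem, NP-complete by Chvátal even for graphs of maximum degree four. For general $d\ge1$, NP-completeness of $d$-cut is due to Gomes and Sau, and I believe it holds on graphs of bounded maximum degree $\Delta=\Delta(d)$. The reduction is the identity map, restricted to instances with $n>2(m+\Delta)$ vertices; smaller instances have constant size and are decided by brute force. The key observation is that such a graph has no cut with all crossing nondegrees at most $m$. Indeed, for a cut $Y\sqcup Z$ and a vertex $y\in Y$, the crossing nondegree of $y$ is at least $|Z|-\Delta$, so the condition forces $|Z|\le m+\Delta$. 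Symmetrically $|Y|\le m+\Delta$, contradicting $n>2(m+\Delta)$. Therefore the instance is $m$-robust if and only if it has no $m$-cut, and complementing the answer gives coNP-hardness.

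I expect the main obstacle to be the exact form of the cited $d$-cut hardness, specifically whether it is available with bounded maximum degree. If only an unbounded-degree version is available, I would first try a padding gadget. One option is to attach to each vertex a large $m$-robust sparse pattern in the style of Lemma~\ref{lem:p4}, joined to its host by more than $m$ edges so that no $m$-cut can separate the pattern from the host. The padding should keep the maximum degree small relative to the vertex count, which excludes complement $m$-cuts, while creating no new $m$-cuts and destroying no existing ones. The step that needs care is verifying that the gadget neither creates nor destroys $m$-cuts.
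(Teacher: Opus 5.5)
Your proposal is correct and is essentially the paper's proof: the paper likewise certifies nonrobustness by an $m$-cut of the graph or of its complement, settles $m=0$ through connectivity of the graph and its complement, and proves hardness by reducing from $m$-Cut on large bounded-degree instances, using your counting argument to show that the complement has no $m$-cut. The hardness input you were unsure about is exactly what the paper cites, namely Gomes--Sau's NP-hardness of $m$-Cut on $(2m+2)$-regular graphs, so with $\Delta=2m+2$ your threshold $2(m+\Delta)=6m+4$ coincides with the paper's and the padding fallback is unnecessary; to make the reduction many-one, map the small instances to a fixed robust graph, such as that of Lemma~\ref{lem:p4}, or to the nonrobust graph $K_2$.
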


\begin{proof}Nonrobustness has a polynomially checkable certificate: either the graph has fewer than two vertices, or it or its complement has an $m$-cut. For hardness, Gomes--Sau \cite[Theorem~1]{GomesSau2019} prove that $m$-Cut is NP-hard on $(2m+2)$-regular graphs. If such a graph $J$ has $n>6m+4$ vertices, its complement has no $m$-cut. Indeed, in such a cut $A\sqcup B$, a vertex in $A$ would have at least $|B|-m$ neighbors in $B$ in $J$. Since $J$ has degree $2m+2$, this gives $|B|\le 3m+2$; similarly, $|A|\le 3m+2$, contradicting $n>6m+4$. Hence $J$ is $m$-robust exactly when $J$ has no $m$-cut. For fixed $m$, the inputs with at most $6m+4$ vertices can be decided exhaustively and mapped to a fixed robust or nonrobust graph according as they have no $m$-cut or have one; Lemma~\ref{lem:p4} supplies the robust graph, and $K_2$ supplies the nonrobust graph. This gives coNP-hardness by a polynomial many-one reduction from the complement of that restricted $m$-Cut problem. For $m=0$, robustness is equivalent to connectedness of both the graph and its complement, with at least two vertices, and is decidable in polynomial time.
\end{proof}

\begin{remark}[Finite recognition and modules]\label{rem:finite-recognition}
In the perturbed graph the recovered classes need not be modules in the sense of modular decomposition: an exterior vertex may distinguish vertices in one initial class. For finite input and a fixed marker-size bound, direct enumeration gives a recognition procedure; without a marker-size bound, Proposition~\ref{prop:robust-conp} applies.
\end{remark}

\subsection{Finite packets relative to an exact subgroup action}\label{sec:packets}
Let $\mathcal E_H=\{F\in\Sym(G):F(hx)=hF(x)\text{ for all }h\in H,x\in G\}$.

\begin{lemma}[Sandwich equations]\label{lem:sandwich}
Let $H$ be an infinite nonabelian group. For distinct $x,y$ in an overgroup, there are infinitely many $h\in H$ such that $\invp{hx}\ne\invp{hy}$.
\end{lemma}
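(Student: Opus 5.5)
The plan is to reduce the failure of $\invp{hx}\ne\invp{hy}$ to a single quadratic equation in $h$. We then show that if this equation held for all but finitely many $h\in H$, conjugation by $x$ would invert every element of $H$, which forces $H$ to be abelian.

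\emph{Step 1: the failure set.} Work in the overgroup $G\supseteq H$ and fix distinct $x,y\in G$. For $h\in H$ we always have $hx\ne hy$. Hence $\invp{hx}=\invp{hy}$ can only happen when $hx=(hy)^{-1}=y^{-1}h^{-1}$, that is, when $hxh=c$ with $c=y^{-1}$. Put
\[
 S=\{h\in H: hxh=c\}.
\]
The lemma asserts exactly that $H\setminus S$ is infinite. Since $H$ is infinite, we argue by contradiction and suppose that $S$ is cofinite in $H$; in particular $S\neq\varnothing$. Note that Lemma~\ref{lem:BP} is not directly applicable here, because the coefficients $x$ and $c$ lie in the overgroup rather than in $H$. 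We therefore argue by hand.

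\emph{Step 2: a rigidity identity.} Let $h\in S$ and $t\in H$ with $ht\in S$. From $hxh=c$ we get $x=h^{-1}ch^{-1}$. Applying the same relation to $ht$ gives
\[
 x=(ht)^{-1}c(ht)^{-1}=t^{-1}\bigl(h^{-1}ch^{-1}\bigr)t^{-1}=t^{-1}xt^{-1}.
\]
Multiplying by $t$ on both sides yields $txt=x$, which is equivalent to $xtx^{-1}=t^{-1}$. Now take any $t\in H$. The set $S\cap St^{-1}$ is cofinite in the infinite group $H$, because right translation by $t^{-1}$ preserves cofiniteness. So it contains some $h$, and this $h$ satisfies $h\in S$ and $ht\in S$. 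Hence $xtx^{-1}=t^{-1}$ holds for every $t\in H$.

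\emph{Step 3: contradiction.} Conjugation by $x$ is a group homomorphism on $G$, and by Step 2 it restricts to inversion on $H$. Inversion is then a homomorphism $H\to H$. This gives $(t_1t_2)^{-1}=t_1^{-1}t_2^{-1}$, i.e.\ $t_2^{-1}t_1^{-1}=t_1^{-1}t_2^{-1}$ for all $t_1,t_2\in H$, so $H$ is abelian. This contradicts the hypothesis that $H$ is nonabelian, so $H\setminus S$ is infinite.

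The only delicate point is Step 2, namely choosing the right pairing of two solutions, $h$ and $ht$, so that the dependence on $h$ cancels and a relation purely between $x$ and $t$ remains. The cofiniteness assumption is exactly what supplies such a pair for every $t\in H$.
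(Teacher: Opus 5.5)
Your Step~1 is correct, and Step~3 would follow from Step~2. The gap is in Step~2, which rests on a noncommutative algebra slip. Since $(ht)^{-1}=t^{-1}h^{-1}$, the relation $(ht)x(ht)=c$ gives
\[
 x=(ht)^{-1}c\,(ht)^{-1}=t^{-1}h^{-1}\,c\,t^{-1}h^{-1},
\]
and this equals $t^{-1}(h^{-1}ch^{-1})t^{-1}$ only when $h$ and $t$ commute. So you have not derived $txt=x$. Consequently the claim that conjugation by $x$ inverts all of $H$ is also unproved: the computation silently assumes the commutativity you are trying to prove. Translating on the other side does not help. In $hxh=c$ the variable $h$ sits on both sides of $x$, so any one-sided translation disturbs one of the two factors.

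The normalization that works, and the one the paper uses, is to left-translate a fixed solution $r\in S$. Writing $h=kr$, the equation $krxkr=rxr$ cancels to $kzk=z$ with $z=rx$. So $\alpha(k)=z^{-1}kz$ agrees with inversion on the cofinite set $\Omega=Sr^{-1}$, but only there. A further step is therefore genuinely needed to reach commutativity.

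\emph{The paper's finish.} Fix $a\in H$ and take $k\in\Omega\cap a^{-1}\Omega$. Then $\alpha(a)=\alpha(ak)\alpha(k)^{-1}=k^{-1}a^{-1}k$. So $k\mapsto k^{-1}a^{-1}k$ is constant on a cofinite set, and its fibers are cosets of $C_H(a)$. A cofinite coset forces $C_H(a)=H$.

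\emph{An equivalent finish in the spirit of your Step~3.} Take $k_1\in\Omega$ and $k_2\in\Omega\cap k_1^{-1}\Omega$. Comparing $\alpha(k_1k_2)=(k_1k_2)^{-1}$ with $\alpha(k_1)\alpha(k_2)=k_1^{-1}k_2^{-1}$ gives $k_1k_2=k_2k_1$. Hence $C_H(k_1)$ is a cofinite subgroup, so it is all of $H$, since a proper subgroup has a disjoint coset of the same infinite size. Thus $Z(H)\supseteq\Omega$ is a cofinite subgroup and equals $H$, the required contradiction.
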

\begin{proof}
Since $hx\ne hy$, equality of the inverse pairs is equivalent to $hxh=y^{-1}$, so it is enough to rule out a cofinite solution set. Suppose that its solution set is cofinite. Fix a solution $r$ and put $h=kr$. With $z=rx$, the equation becomes $kzk=z$, which says that conjugation $\alpha(k)=z^{-1}kz$ agrees with inversion on a cofinite set $\Omega\subseteq H$. The next computation turns that large-set agreement into a restriction on every fixed group element. For fixed $a\in H$ and $k\in \Omega\cap a^{-1}\Omega$,
\[
 \alpha(a)=\alpha(ak)\alpha(k)^{-1}=k^{-1}a^{-1}k.
\]
The intersection is cofinite. A nonempty fiber of $k\mapsto k^{-1}a^{-1}k$ is a coset of $C_H(a)$, and a proper subgroup coset cannot be cofinite in an infinite group. Hence every $a$ is central in $H$, contradicting the hypothesis and proving the lemma.
\end{proof}

\begin{lemma}[Uniform relative packet]\label{lem:packet}
Suppose $H\le M\le G$, $a\notin M$, and $H$ is infinite and nonabelian. There is
\[
a\in W=W^{-1}\subseteq\cl{H,a}\setminus M,\qquad |W|\le76,
\]
such that every $F\in\mathcal E_H$ with $F(W)=W$ fixes $Ha$ pointwise.
\end{lemma}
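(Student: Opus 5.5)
The plan is to reduce everything to the single point $a$. If $F\in\mathcal E_H$ and $F(a)=a$, then $F(ha)=hF(a)=ha$ for every $h\in H$, so $F$ fixes $Ha$ pointwise. It therefore suffices to build a finite inverse-closed $W\ni a$ inside $\cl{H,a}\setminus M$ whose "$H$-shape" singles out $a$. The basic invariant comes from exact $H$-equivariance. For $w\in W$ put
\[
 D_W(w)=\{h\in H: hw\in W\},
\]
a finite set containing $1$. If $F\in\mathcal E_H$ and $F(W)=W$, then $hw\in W$ implies $hF(w)=F(hw)\in W$, so $D_W(w)\subseteq D_W(F(w))$. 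Applying the same reasoning to $F^{-1}$, which also lies in $\mathcal E_H$ and preserves $W$, gives equality. So $F$ permutes $W$ preserving the invariant $D_W$, and it maps each block $W\cap Hw$ onto a block $W\cap HF(w)$ by the rule $hw\mapsto hF(w)$. Containment in $\cl{H,a}\setminus M$ is automatic for elements of the form $ta$ and $(ta)^{-1}$ with $t\in H$, since $M\supseteq H$ and $a\notin M$.

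For the construction, I would take $W=Ta\cup (Ta)^{-1}$ for a finite set $T\subseteq H$ containing $1$. The set $T$ should be rigid in two senses.

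First, it should have no nontrivial right self-translation inside larger patterns: if $Tg^{-1}\subseteq T'$ for some other block pattern $T'$, then $g=1$. A concrete choice is $T=\{1,h_1,\dots,h_k\}$ with the $h_i$ chosen generically. Then $D_W(a)=T$ (or a controlled superset), and $F(a)=ga\in Ha$ forces $Tg^{-1}=T$, hence $g=1$.

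Second, the block $W\cap Ha$ should be strictly larger than every other block $W\cap Hw$. This rules out $F(a)$ lying in a different coset, since $F$ preserves block sizes through $D_W$.

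All of this is to be arranged by choosing the $h_i$ one at a time, each avoiding finitely many inequalities of degree at most two in the new unknown (Lemma~\ref{lem:BP}). Lemma~\ref{lem:sandwich} (this is where nonabelianness of $H$ is used) supplies infinitely many choices with $\invp{h_ia}\ne\invp{h_ja}$. That makes the inverse-closure add genuinely distinct elements, and lets us keep the coset incidences of the inverses $a^{-1}h_i^{-1}$ under control.

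The main obstacle is the inverse part $(Ta)^{-1}$. Its elements $a^{-1}t^{-1}$ lie in the cosets $Ha^{-1}t^{-1}$, and these may coincide with one another or with $Ha$ itself, for instance when $a^2\in H$, when $a$ normalizes $H$, or in the Boolean-like situation $a^{-1}\in Ha$. I would split into cases according to whether $Ha^{-1}=Ha$, and more generally according to which cosets $Ha^{-1}t^{-1}$ collide.

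In the generic case, the inverses spread over distinct cosets, each containing one or two points. The large block $Ta$ with $k\ge 3$ is then unique, and the argument above finishes.

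In the degenerate case, the inverses fall back into $Ha$, so that $W\cap Ha=Ta\cup a^{-1}T^{-1}$ and $D_W(a)$ depends on both. Here I would impose inequalities forcing the combined pattern $T\cup\{s: sa\in a^{-1}T^{-1}\}$ to still have trivial right self-translations. Where needed, I would add a second, asymmetric auxiliary block of different size to break a possible swap $a\leftrightarrow a^{-1}$-type symmetry. Counting the points used across these cases, with blocks of size about $19$ and their inverses together with auxiliary blocks, should account for the bound $|W|\le76$. I expect that checking that every residual coincidence is a nontrivial quadratic inequality true at the identity, so that Lemma~\ref{lem:BP} applies, is the delicate step.
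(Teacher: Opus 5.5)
Your reduction to $F(a)=a$ and the invariant $D_W(w)=\{h\in H:hw\in W\}$ are correct. The gap is that your criteria for making $D_W(a)$ unique do not cover configurations that actually occur.

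Which cosets the points $(ta)^{-1}=a^{-1}t^{-1}$ occupy is not a matter of generic choice: $Ha^{-1}t_1^{-1}=Ha^{-1}t_2^{-1}$ exactly when $t_1^{-1}t_2\in aHa^{-1}$. Suppose $a$ normalizes $H$ and $a^2\notin H$, for instance $G=H\times\cl{a}$ with $a$ of infinite order and $M=H$. Then all of $(Ta)^{-1}$ lies in the single coset $Ha^{-1}\ne Ha$. It forms a block of exactly $|T|$ points, so your ``strictly largest block'' criterion fails. This situation is in neither your generic nor your degenerate case.

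Put $\beta(t)=a^{-1}t^{-1}a$, so that $W\cap Ha^{-1}=\beta(T)a^{-1}$. Define $F(ha)=hsa^{-1}$ and $F(ha^{-1})=hs^{-1}a$ for $h\in H$, and the identity elsewhere. This $F$ lies in $\mathcal E_H$ and preserves $W$ whenever $Ts=\beta(T)$. In the direct-product example, $T=\{1,h\}$ with $s=h^{-1}$ already does this for every $h$. So what must be excluded is a cross-block coincidence $Ts=\beta(T)$, not a right self-translation of $T$.

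Excluding that coincidence is exactly where nonabelianness enters, and your tools do not reach it. After the linear exclusions, ruling out $s=1$ means finding $h\in T$ with $h\ne a^{-1}h^{-1}a$, i.e.\ $ha^{-1}ha\ne1$. This inequality is false at $h=1$, so Lemma~\ref{lem:BP} cannot supply it.

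Nor can an ``asymmetric auxiliary block'' help. If $H$ were abelian and inverted by $a$, or Boolean and central, then inversion on $Ha\cup Ha^{-1}$ (identity elsewhere) would lie in $\mathcal E_H$. It would preserve \emph{every} inverse-closed $W$ and move $a$ whenever $a^2\ne1$. The symmetry can only be broken by using that $H$ is nonabelian. Lemma~\ref{lem:sandwich} with $x=a^{-1}$, $y=a$ is what provides the needed $h$. You invoked it instead for $\invp{h_ia}\ne\invp{h_ja}$, which is a degree-one condition in the new unknown and needs no hypothesis on $H$. The remaining coincidences and the bound $76$ are also left unchecked.

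The paper avoids the search for a globally unique pattern and defeats candidate symmetries one at a time:
\begin{itemize}
\item It takes the six-point core $W_0=\{a,h_1a,h_2a\}\cup\{a,h_1a,h_2a\}^{-1}$.
\item It isolates the part $C$ lying in cosets that contain at least three points of $W_0$. The auxiliary $\mathcal D$-graph keeps $C$ recognizable as $W$ grows.
\item It notes that $F|_C$ must be one of at most $36$ $H$-equivariant candidate bijections.
\item It kills each nonidentity candidate $\phi$ by adding one pair $\invp{x}$ with $x\in Hc$, chosen by Lemma~\ref{lem:sandwich} so that $\bar\phi(x)\notin\invp{x}$, and reserves $\invp{\bar\phi(x)}$ outside $W$.
\end{itemize}
This handles every coset coincidence uniformly and gives $|W|\le6+2\cdot35=76$.
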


The proof fixes a core $C\ni a$ of at most six points that every $F\in\mathcal E_H$ with $F(W)=W$ maps onto itself, and then, for each of the at most $35$ nonidentity ways such an $F$ can act on $C$, adds to $W$ one inverse pair whose forced image is kept outside $W$.
\begin{proof}
\emph{Initial set and candidates.} Choose distinct $1,h_1,h_2\in H$ and let
\[
 W_0=\{a,h_1a,h_2a\}\cup\{a,h_1a,h_2a\}^{-1}.
\]
Put
\[
 \mathcal D=\{xy^{-1}:x,y\in W_0,\ x\ne y,\ xy^{-1}\in H\}.
\]
To recognize the original core after later witness insertions, define the auxiliary $\mathcal D$-graph on any subset $X_0\subseteq G$ by
\[
x\sim_{\mathcal D} y\quad\Longleftrightarrow\quad x,y\in X_0,\ x\ne y,\ xy^{-1}\in \mathcal D.
\] In the auxiliary $\mathcal D$-graph on $W_0$, the nonempty intersections with right $H$-cosets are complete components. Let $C$ be the union of components of size at least three. Then $a\in C$, $|C|\le6$, and $C$ meets at most two right $H$-cosets.

\emph{Witness insertions.} There are at most $6^2=36$ bijections $\phi:C\to C$ extending to an $H$-equivariant permutation $\bar\phi$ of $HC$: the image of one representative of each occupied coset determines the extension. Call these bijections \emph{candidates}. Start with $W=W_0$ and a symmetric reserved set $R=\varnothing$, and for each nonidentity candidate choose $c\in C$ moved by it. Applying
Lemma~\ref{lem:sandwich} to the distinct elements $c$ and $\phi(c)$ gives
infinitely many $h\in H$ with $\invp{hc}\ne\invp{h\phi(c)}$, and hence infinitely many
$x\in Hc$ with $\bar\phi(x)\notin\invp{x}$, providing a supply of possible witnesses.
Require in addition
\[
 \invp{x}\cap(W\cup R)=\varnothing,\qquad \invp{x}\cap \mathcal D W=\varnothing,
 \qquad \bar\phi(x)\notin W.
\]
Only finitely many $x$ are excluded: on $Hc$, both $x\mapsto x$ and $x\mapsto\bar\phi(x)$ are injective, and each prescribed inverse pair has at most two preimages. An admissible $x$ therefore remains; add $\invp{x}$ to $W$ and $\invp{\bar\phi(x)}$ to $R$ so this candidate can no longer preserve the final packet.
The symmetric sets $W,R$ remain disjoint. Because $W=W^{-1}$, the condition $\bar\phi(x)\notin W$ also excludes its inverse from the old $W$, while $\bar\phi(x)\notin\invp{x}$ excludes both from the newly added pair. Finally, $\invp{x}\cap R=\varnothing$ protects every reservation made for an earlier candidate.

\emph{Conclusion.} The new pair is auxiliary-separated from the old $W$, so its component has size at most two. Consequently $C$ remains exactly the union of the auxiliary components of size at least three. There are at most 35 nonidentity candidates, whence $|W|\le6+2\cdot35=76$ and $|R|\le70$. All selected and reserved points lie in $\cl{H,a}\setminus M$.

An $H$-equivariant permutation preserves $\mathcal D$-adjacency: $x=dy$ with $d\in \mathcal D\subseteq H$ implies $F(x)=dF(y)$, and the inverse permutation gives the converse. If it preserves the final $W$, it therefore preserves its auxiliary $\mathcal D$-graph and hence $C$, so its restriction is one of the candidates. A nonidentity candidate has a retained witness $x=hc\in W$ whose required image $h\phi(c)=\bar\phi(x)$ is still reserved outside $W$, a contradiction. The restriction must therefore be the identity, which fixes $C$ and $Ha$ as required.
\end{proof}

\begin{lemma}[Boolean packet]\label{lem:booleanpacket}
If $G$ is Boolean, $H\le M\le G$, $|H|\ge4$, and $a\notin M$, choose distinct nonidentity $p,q\in H$ (available because $|H|\ge4$) and put $W=\{1,p,q\}a$. This symmetric three-point set lies outside $M$, contains $a$, and has the conclusion of Lemma~\ref{lem:packet}.
\end{lemma}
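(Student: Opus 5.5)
The plan is a direct computation: use that $G$ is Boolean, hence abelian with every element its own inverse, and test the three possible images of $a$ under an $F\in\mathcal E_H$ preserving $W$.

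\emph{Step 1: the elementary properties of $W$.} Since every element of $G$ is an involution, $W^{-1}=W$. The three points $a,pa,qa$ are distinct because $1,p,q$ are distinct. Since $p,q\in H\le M$ and $a\notin M$, none of $a,pa,qa$ lies in $M$. Clearly $a\in W$.

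\emph{Step 2: pinning $F(a)$.} Let $F\in\mathcal E_H$ with $F(W)=W$. Then $F(a)\in W$, so $F(a)=ca$ with $c\in\{1,p,q\}$. By $H$-equivariance,
\[
F(pa)=pca\qquad\text{and}\qquad F(qa)=qca,
\]
and both must lie in $W$. Equivalently, $pc$ and $qc$ must lie in $\{1,p,q\}$.

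We now exclude $c=p$ and $c=q$.
\begin{itemize}[nosep]
\item If $c=p$, then $qc=qp$. We have $qp\ne1$ because $q\ne p$ (here $p^{-1}=p$). We have $qp\ne p$ because $q\ne1$, and $qp\ne q$ because $p\ne1$. This contradicts $qc\in\{1,p,q\}$.
\item If $c=q$, the same argument applied to $pq=qp$ gives a contradiction.
\end{itemize}
Hence $c=1$, that is, $F(a)=a$.

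\emph{Step 3: the conclusion of Lemma~\ref{lem:packet}.} Equivariance now gives $F(ha)=hF(a)=ha$ for every $h\in H$, so $F$ fixes $Ha$ pointwise.

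The only substantive point is the small case check in Step~2. It uses exactly the distinctness of the nonidentity elements $p,q$, which the hypothesis $|H|\ge4$ makes available.
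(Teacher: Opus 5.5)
Your proof is correct and follows the paper's argument: both write $F(a)=ca$ with $c\in\{1,p,q\}$ and use $H$-equivariance to force $c=1$, and both check symmetry and disjointness from $M$ in the same way. The only variation is the final step: the paper observes that a nonidentity Boolean translation pairs off points and so cannot stabilize the odd set $\{1,p,q\}$, while your explicit check of the cases $c=p$ and $c=q$ verifies the same fact directly.
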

\begin{proof}
If $F$ commutes with the $H$-action and preserves $W$, then $F(a)=ha$ for some $h\in H$ and $\{1,p,q\}h=\{1,p,q\}$. The second equality reduces the problem to the translation stabilizer of this three-point set. A nonidentity Boolean translation acts by two-cycles and cannot preserve an odd-cardinality set, so $h=1$ and the required coset is fixed pointwise. Finally, every element of a Boolean group is its own inverse, so $W=W^{-1}$; and $ka\in M$ with $k\in H\le M$ would force $a\in M$, so $W\cap M=\varnothing$.
\end{proof}

\begin{table}[tb]
\centering\small
\begin{tabular}{@{}p{.22\textwidth}p{.7\textwidth}@{}}
\toprule
Parameter & Role \\
\midrule
$m=76$ & Uniform bound from Lemma~\ref{lem:packet} on a nonabelian relative packet and on each layer's error degree. \\
$L=2m+1$ & Minimum size of a block in the prescribed four-block marker. \\
$18$ & Dimension of the Boolean decoder. \\
$96$ & Color capacity of the Boolean decoder (tags $\Pi_1,\ldots,\Pi_{96}$); Theorem~\ref{thm:finitecolor} supplies at most $17$ colors, and the binary refinement in the proof of Proposition~\ref{prop:exceptmany} uses at most $34\le96$. \\
$33/67$ & Largest odd tag sizes for the two working palettes in $C_2^{18}$ (the $17$ colors of Theorem~\ref{thm:finitecolor} and the $34$ of its binary refinement). \\
\bottomrule
\end{tabular}
\caption{Construction constants. None is an asserted optimal bound, except where a separate optimality statement is proved.}
\label{tab:constants}
\end{table}

\section{Regular-case construction and maximal abundance}\label{sec:main}\subsection{Alternating-layer construction}\label{sec:layers}
The regular-case construction is organized along a continuous increasing chain with strict successor inclusions
\[
 H=G_0<G_1<\cdots<G_\theta=G.
\]
The set $G_{\alpha+1}\setminus G_\alpha$ is called \emph{layer $\alpha$}, for $\alpha<\theta$.
For $g\notin H$, let $r(g)=\alpha$ when $g\in G_{\alpha+1}\setminus G_\alpha$; continuity ensures that this is defined. Choose bits $\eps_\alpha$ for $\alpha<\theta$ satisfying
\begin{equation}\label{eq:flip}
 \eps_{\alpha+1}=1-\eps_\alpha\quad\text{for all }\alpha\text{ with }\alpha+1<\theta.
\end{equation}
At $0$ and limit indices the bit is free. Let
\[
 W_\alpha=W_\alpha^{-1}\subset G_{\alpha+1}\setminus G_\alpha,
 \qquad |W_\alpha|\le m.
\]
Given a core connection set $U\subset H\setminus\{1\}$, define $S$ by
\begin{equation}\label{eq:S}
 S\cap H=U,\qquad
 \mathbf1_S(g)=\eps_{r(g)}\mathbin\oplus\mathbf1_{W_{r(g)}}(g)
 \quad\text{for all }g\notin H.
\end{equation}
Because every $G_\alpha$ is a subgroup, $r(g^{-1})=r(g)$.
Together with $U=U^{-1}$ and $W_\alpha=W_\alpha^{-1}$, this shows that
$S=S^{-1}$. Each layer is therefore almost empty or almost complete, with its packet
as the finite exceptional set. Figure~\ref{fig:layers} records this rule and the independent choices available at limit stages.
\begin{figure}[!htbp]
\centering
\begin{tikzpicture}[x=1cm,y=1cm,every node/.style={font=\small}]
  \path[use as bounding box] (-3.55,-.30) rectangle (9.90,6.45);
  \draw[fill=white] (0,0) rectangle (5.5,.80);
  \node at (2.75,.48) {core: $S\cap H=U$};
  \node[font=\footnotesize] at (2.75,.17) {$\Cay(H,U)$ is specified separately};
  \node[anchor=east] at (-.22,.40) {$H=G_0$};
  \foreach \lo/\hi/\a/\bit in {.80/1.65/0/1,1.65/2.50/1/0,2.50/3.35/2/1}{
    \ifnum\bit=1
      \draw[fill=black!16] (0,\lo) rectangle (5.5,\hi);
      \foreach \px in {1.0,2.65,4.45}{\draw[fill=white] (\px,{(\lo+\hi)/2}) circle (2.2pt);}
    \else
      \draw[fill=white] (0,\lo) rectangle (5.5,\hi);
      \foreach \px in {1.0,2.65,4.45}{\fill (\px,{(\lo+\hi)/2}) circle (2.2pt);}
    \fi
    \node[anchor=east] at (-.22,{(\lo+\hi)/2}) {layer $\a$: $\eps_\a=\bit$};
    \node[anchor=west] at (5.72,{(\lo+\hi)/2}) {$W_\a$ (exceptions)};
  }
  \draw[densely dotted] (0,3.35)--(0,4.05);
  \draw[densely dotted] (5.5,3.35)--(5.5,4.05);
  \node at (2.75,3.70) {$\vdots$};
  \node[anchor=west,font=\footnotesize] at (5.72,3.70) {omitted stages};
  \draw[fill=black!16] (0,4.05) rectangle (5.5,4.90);
  \foreach \px in {1.0,2.65,4.45}{\draw[fill=white] (\px,4.475) circle (2.2pt);}
  \node[anchor=east] at (-.22,4.475) {layer $\lambda$: $\eps_\lambda=1$};
  \node[anchor=west,align=left,font=\footnotesize] at (5.72,4.475) {free limit bit;\\shown here as $1$};
  \draw[fill=white] (0,4.90) rectangle (5.5,5.75);
  \foreach \px in {1.0,2.65,4.45}{\fill (\px,5.325) circle (2.2pt);}
  \node[anchor=east] at (-.22,5.325) {layer $\lambda+1$: $0$};
  \node[anchor=west] at (5.72,5.325) {$W_{\lambda+1}$ (exceptions)};
  \node[anchor=west,font=\footnotesize] at (5.72,5.96) {successor bit flips};
  \node[font=\footnotesize] at (2.75,6.15) {each layer $\alpha$ is $G_{\alpha+1}\setminus G_\alpha$};
\end{tikzpicture}
\caption{The connection set in \eqref{eq:S}. A gray layer selects every element except its packet (open circles); a white layer selects only its packet (filled circles). The dots represent $W_\alpha=W_\alpha^{-1}$, with $|W_\alpha|\le m$ and $a_\alpha\in W_\alpha$. Each subgroup is the core together with all preceding layers. At the displayed limit $\lambda$, the free choice $\eps_\lambda=1$ is shown; choosing $0$ reverses the next finite alternating segment. Band heights and the number of dots are schematic; omitted stages include $G_\lambda=\bigcup_{\alpha<\lambda}G_\alpha$.}
\label{fig:layers}
\end{figure}

\begin{corollary}[Cayley specialization of hierarchy recovery]\label{cor:cayleyrecovery}
Suppose $|H|>2m$ and the robust-marker edge relation inside $\Cay(H,U)$ is connected. The graph defined by \eqref{eq:S} intrinsically determines every right $G_\alpha$-coset partition and every baseline bit. If an automorphism fixes $1$, it preserves each layer and each individual packet $W_\alpha$. If $S'$ is defined by \eqref{eq:S} from the same $H$, $U$, $(G_\alpha)$ and $(W_\alpha)$ and another successor-alternating bit sequence $(\eps'_\alpha)$, then every isomorphism $\Cay(G,S)\to\Cay(G,S')$ maps the partition of $G$ into right $G_\alpha$-cosets onto itself for every $\alpha\le\theta$, and if such an isomorphism exists, then $\eps'_\alpha=\eps_\alpha$ for every $\alpha<\theta$.
\end{corollary}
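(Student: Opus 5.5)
The plan is to apply Theorem~\ref{thm:recovery} to $\Gamma=\Cay(G,S)$ with vertex set $X=G$ and the hierarchy $E_\alpha$ given by \emph{lying in the same right $G_\alpha$-coset}, that is, $x\,E_\alpha\,y$ iff $xy^{-1}\in G_\alpha$. I would proceed in three steps: check the hypotheses of that theorem, use its conclusions to identify layers and packets at the identity vertex, and then use its canonicity clause to compare $S$ with $S'$.

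\emph{Hypotheses.}
\begin{itemize}[nosep]
\item Each $E_\alpha$ is an equivalence relation and the family is nested. Continuity at limits holds because $G_\lambda=\bigcup_{\alpha<\lambda}G_\alpha$, and $E_\theta=G\times G$.
\item Uniform branching holds because $G_\alpha<G_{\alpha+1}$ is strict, so every right $G_{\alpha+1}$-coset contains at least two right $G_\alpha$-cosets.
\item For $x,y$ in distinct $H$-cosets, the rank is $r(x,y)=r(xy^{-1})$, so the hierarchy bits are exactly the $\eps_\alpha$. These alternate at successors by \eqref{eq:flip}.
\item The layer errors: a pair $\{x,y\}$ of rank $\alpha$ is in $\mathcal L_\alpha$ iff $xy^{-1}\in W_\alpha$, by \eqref{eq:S}. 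Hence each vertex $y$ has at most $|W_\alpha|\le m$ error partners in layer $\alpha$, namely the points $wy$ with $w\in W_\alpha$.
\item Each $E_0$-class is a coset $Hx$, so it has $|H|>2m$ vertices.
\item Right translation $h\mapsto hx$ is an isomorphism $\Cay(H,U)\to\Gamma[Hx]$, since $(hx)(h'x)^{-1}=hh'^{-1}$. Because markers are finite induced subgraphs, the connected robust-marker relation of $\Cay(H,U)$ transports to every coset, giving hypothesis (ii).
\end{itemize}
Theorem~\ref{thm:recovery} then determines every $E_\alpha$, which is the right $G_\alpha$-coset partition, together with every bit $\eps_\alpha$.

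\emph{Rooted automorphisms.} Let $F$ be an automorphism with $F(1)=1$. By the canonicity clause with $\Gamma'=\Gamma$, we have $(F\times F)[E_\alpha]=E_\alpha$ for all $\alpha$.
\begin{itemize}[nosep]
\item Since $F$ fixes $1$, it maps $G_\alpha$, the $E_\alpha$-class of $1$, onto itself. Hence it preserves each layer $G_{\alpha+1}\setminus G_\alpha$.
\item For $g$ in layer $\alpha$, the pair $\{g,1\}$ has rank $\alpha$. It is a layer error iff $g\in W_\alpha$, because $g\cdot1^{-1}=g$.
\item Theorem~\ref{thm:recovery} recovers $\mathcal L_\alpha$ canonically, and $F$ preserves adjacency, so $F$ preserves $\mathcal L_\alpha$.
\item Concretely, $W_\alpha$ is the set of neighbors of $1$ in layer $\alpha$ if $\eps_\alpha=0$, and the set of nonneighbors of $1$ there if $\eps_\alpha=1$. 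Both are $F$-invariant, since $F$ fixes $1$, preserves the layer, and preserves the bit. Thus $F(W_\alpha)=W_\alpha$.
\end{itemize}

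\emph{Two bit sequences.} Let $S'$ be built from the same data with bits $\eps'_\alpha$, and let $f:\Cay(G,S)\to\Cay(G,S')$ be an isomorphism.
\begin{itemize}[nosep]
\item Both graphs satisfy the hypotheses with the same $E_\alpha$, since the coset hierarchy does not depend on the bits.
\item Canonicity gives $(f\times f)[E_\alpha]=E_\alpha$, so $f$ maps the right $G_\alpha$-coset partition onto itself for every $\alpha\le\theta$.
\item Canonicity also gives agreement of the corresponding baseline bits, so $\eps'_\alpha=\eps_\alpha$ for all $\alpha<\theta$.
\end{itemize}

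The main obstacle is modest. It is ensuring that the recovered $\mathcal L_\alpha$ and bits are invariant under $F$, rather than only the partitions. I would handle this by noting that the bit is read from the canonically induced $\Delta_\alpha$, and that $\mathcal L_\alpha$ is then defined from adjacency of $\Gamma$ together with $E_\alpha$, $E_{\alpha+1}$ and $\eps_\alpha$. All of these are preserved by $F$.
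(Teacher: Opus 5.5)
Your proposal is correct and follows the paper's own proof essentially step by step. You verify the hypotheses of Theorem~\ref{thm:recovery} for the right-coset hierarchy, bounding layer errors by the translates $wy$ with $w\in W_\alpha$ and transporting the core markers to each coset $Hx$ by right translation; this step implicitly uses $S\cap H=U$. You then apply the isomorphism clause to a rooted automorphism, read off $W_\alpha$ as the neighbors or nonneighbors of $1$ in layer $\alpha$ according to $\eps_\alpha$, and invoke canonicity for the comparison of $S$ with $S'$.
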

\begin{proof}
On the original set $G$, put $xE_\alpha y$ when $yx^{-1}\in G_\alpha$. To invoke the recovery theorem, we verify its hierarchy and error hypotheses for these right-coset equivalences. They are continuous, and strict successor inclusions imply branching inside every parent class. For a fixed vertex $x$, a discrepancy in layer $\alpha$ occurs only at vertices $wx$, with $w\in W_\alpha$; hence the error degree in the entire layer is at most $m$. Each initial class is a right coset $Hx$, so it has $|H|>2m$ points. Since $S\cap H=U$ by \eqref{eq:S}, the graph induced on $H$ is $\Cay(H,U)$, and right translation by $x$, an automorphism of $\Cay(G,S)$, carries it onto the graph induced on $Hx$. Hence every initial class carries the supplied connected robust-marker relation, so Theorem~\ref{thm:recovery} applies.

By the isomorphism clause of Theorem~\ref{thm:recovery}, an automorphism fixing $1$ maps the right $G_\alpha$-coset of $1$, which is $G_\alpha$, onto itself, so it preserves every $G_\alpha$ and its successor layer. The last step is to recover the finite exception set itself: in a zero layer the packet is the set of neighbors of $1$, while in a one layer it is the set of nonneighbors of $1$. Each packet is consequently individually invariant.

For the last assertion, the verification above applies verbatim to $S'$, whose hierarchy is the same family $(E_\alpha)$ of right-coset equivalences. For an isomorphism $f:\Cay(G,S)\to\Cay(G,S')$, Theorem~\ref{thm:recovery} gives $(f\times f)[E_\alpha]=E_\alpha$ at every ordinal $\alpha\le\theta$, which says that $f$ maps the partition into right $G_\alpha$-cosets onto itself, and it gives $\eps'_\alpha=\eps_\alpha$ for every $\alpha<\theta$.
\end{proof}
\subsection{Exhaustive choice of a core and exact translation recovery}
To choose a countable core in the nonabelian regular case, we need the following finite-witness reduction.
\begin{lemma}\label{lem:witness}
A nonabelian, nongeneralized-dicyclic group contains a finitely generated subgroup with both properties. Every nonabelian subgroup of a generalized dicyclic group is generalized dicyclic.
\end{lemma}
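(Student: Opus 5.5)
The second assertion comes first, since the first one uses it. Let $G$ be generalized dicyclic, with abelian $A$ of index two and $y\notin A$ of order four inverting $A$. Let $K\le G$ be nonabelian. Then $K\not\subseteq A$, so $K\cap A$ has index two in $K$ and is abelian. Pick $k\in K\setminus A$; then $k=ay$ with $a\in A$. Conjugation by $k$ acts on $A$ as conjugation by $y$, because $A$ is abelian, so $k$ inverts $A$ and in particular inverts $K\cap A$. Moreover $k^2=ayay=a\,a^{-1}y^2=y^2$, so $k^2$ is the fixed element $y^2$ of order two. Hence $k$ has order four, lies outside $K\cap A$, and acts on it by inversion. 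Since $K$ is nonabelian, $K$ is generalized dicyclic.

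For the first assertion, let $G$ be nonabelian and not generalized dicyclic. The plan is a compactness argument over finitely generated subgroups, using the directed family $\mathcal F$ of finitely generated subgroups of $G$ that contain a fixed noncommuting pair $g_1,g_2$. Every member of $\mathcal F$ is nonabelian. Suppose, for contradiction, that every $K\in\mathcal F$ is generalized dicyclic. I will build a generalized dicyclic structure on $G=\bigcup\mathcal F$.

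The key step is uniqueness of the abelian index-two subgroup in most cases. I claim that a nonabelian generalized dicyclic group $K$ has a unique abelian subgroup of index two, unless $K\cong Q_8\times B$ with $B$ Boolean. To see this, suppose $A\ne A'$ are two abelian subgroups of index two. Then $A\cap A'$ has index four and is central, since $K=AA'$. So $K/Z(K)$ has order at most four, and because $K$ is nonabelian, $K/Z(K)\cong C_2^2$. Every element outside $A$ inverts $A$, so $A\cap A'$ is inverted by such elements while also being central. This forces $A\cap A'$ to be Boolean, and combining it with the order-four elements gives $Q_8\times B$. This case analysis is the main obstacle and needs care.

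The argument then splits into two cases.
\begin{enumerate}[label=(\alph*),nosep]
\item If cofinally many $K\in\mathcal F$ are not of the form $Q_8\times B$, the unique subgroups $A_K$ are compatible along inclusions. Indeed, for $K\le K'$ the group $A_{K'}\cap K$ is abelian of index at most two in the nonabelian $K$, hence equals $A_K$. So $A=\bigcup_K A_K$ is an abelian subgroup of index two in $G$. Every element outside $A$ has order four and inverts $A$, since this is checked inside some $K$. Thus $G$ is generalized dicyclic, a contradiction.
\item Otherwise, eventually every $K\cong Q_8\times B_K$. Each such $K$ has exactly three abelian index-two subgroups, and every one of them works as an inverted subgroup: in $Q_8\times B$, each element outside $\langle i\rangle\times B$ has order four and inverts it. The restriction maps between these finite three-element sets form an inverse system of nonempty finite sets. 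By König's lemma, or in general Tychonoff, this system has a compatible thread, and the union along that thread again makes $G$ generalized dicyclic, a contradiction.
\end{enumerate}
Hence some $K\in\mathcal F$ is not generalized dicyclic, and that $K$ is the required finitely generated subgroup.
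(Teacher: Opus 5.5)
Your proof of the second assertion is correct and matches the paper's argument. The first assertion has a genuine gap: the uniqueness claim it rests on is false as stated. Take $K=\langle x,y\mid x^4=y^4=1,\ yxy^{-1}=x^{-1}\rangle\cong C_4\rtimes C_4$. It is generalized dicyclic via $A=\langle x,y^2\rangle\cong C_4\times C_2$ and the order-four element $y$. Yet $\langle x,y^2\rangle$, $\langle y,x^2\rangle$ and $\langle xy,x^2\rangle$ are three distinct abelian subgroups of index two. Also $K\not\cong Q_8\times C_2$, because the squares of $K$ form the three-element set $\{1,x^2,y^2\}$. Your sketch fails in two places. First, it applies ``every element outside $A$ inverts $A$'' to an arbitrary abelian index-two subgroup; here $xyx^{-1}=x^2y\ne y^{-1}$, so $x$ does not invert $\langle y,x^2\rangle$. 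Second, the step ``combining it with the order-four elements gives $Q_8\times B$'' does not hold. So in case~(a) the subgroups $A_K$ are undefined for such $K$, and arbitrary choices need not be compatible under $A'\mapsto A'\cap K$.

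The repair is to track only the set $\mathcal D(K)$ of abelian index-two subgroups $A\le K$ such that every element of $K\setminus A$ has order four and inverts $A$. Your hereditary computation shows $\mathcal D(K)\ne\varnothing$, and $A'\mapsto A'\cap K$ maps $\mathcal D(K')$ into $\mathcal D(K)$ for $K\le K'$ in $\mathcal F$. For $\mathcal D(K)$ your uniqueness claim is true. If $A\ne A'$ lie in $\mathcal D(K)$, an element of $K\setminus(A\cup A')$ shows that the common square of the elements outside $A$ equals that of the elements outside $A'$. Then any $u\in A\setminus A'$ and $v\in A'\setminus A$ generate a copy of $Q_8$, and $K\cong Q_8\times B$ since $A\cap A'$ is central and Boolean. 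But you do not need uniqueness at all. In a nonabelian group, two distinct abelian index-two subgroups meet in the center, which then has index four, so there are at most three such subgroups. The sets $\mathcal D(K)$ therefore form an inverse system of nonempty finite sets, and your thread argument from case~(b) handles every case at once. That is essentially the paper's proof, which runs the same compactness in $\{g_1,g_2\}\times\{0,1\}^G$. It selects $t\in\{g_1,g_2\}$ and a homomorphism $\chi:G\to C_2$ with $\chi(t)=1$, abelian kernel, and $t$ of order four inverting that kernel. It uses only that one of $g_1,g_2$ lies outside the abelian index-two subgroup of each finitely generated $K$, so it never needs to decide when that subgroup is unique.
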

\begin{proof}
For the hereditary statement, intersect the abelian index-two subgroup with the given nonabelian subgroup.  Write $A$ for that abelian index-two subgroup and $y\notin A$ for an element of order four inverting $A$. Every element outside $A$ is $xy$ with $x\in A$, and $(xy)^2=y^2$ and $xy$ inverts $A$, because $A$ is abelian and $y$ inverts it; so every element outside $A$ has order four and inverts $A$. An exterior element has order four, has its nontrivial square in the intersection, and inverts that intersection, so the subgroup is again generalized dicyclic.
For the first statement fix noncommuting $a,b$. We argue by compactness, because failure of the conclusion would make every finite piece look generalized dicyclic. If every finitely generated subgroup containing them were generalized dicyclic, use the compact product $\{a,b\}\times\{0,1\}^G$. Impose that the first coordinate $t$ has order four, that the second coordinate is a homomorphism $\chi:G\to C_2$ with $\chi(t)=1$, that its kernel is abelian, and that $t$ inverts it. Every finite family is satisfied in a finitely generated subgroup containing its coefficients and $a,b$: at least one of $a,b$ lies outside that subgroup's abelian index-two subgroup. Compactness then gives the corresponding structure on all of $G$, contradicting the hypothesis.
\end{proof}

\begin{lemma}[Layered connection sets over a rigid core]\label{lem:layered}
Let $G$ be an uncountable group of cardinality $\kappa$, and let $m=76$ (Table~\ref{tab:constants}). Let $H\le G$ be a countably infinite subgroup, nonabelian unless $G$ is Boolean, and let $U\subseteq H\setminus\{1\}$ be symmetric, with $\Cay(H,U)$ a GRR and a connected robust-marker edge relation in $\Cay(H,U)$. Enumerate $G=\{g_\beta:\beta<\kappa\}$ and let
\[
 G_0=H,\qquad G_{\alpha+1}=\cl{G_\alpha,a_\alpha},\qquad G_\lambda=\bigcup_{\alpha<\lambda}G_\alpha\ \text{at limits},
\]
where $a_\alpha$ is the least-indexed $g_\beta\notin G_\alpha$; then $G_\kappa=G$. For each $\alpha<\kappa$ let $W_\alpha$ come from Lemma~\ref{lem:packet} with $M=G_\alpha$ and $a=a_\alpha$ (from Lemma~\ref{lem:booleanpacket} when $G$ is Boolean), so that
\[
 a_\alpha\in W_\alpha\subseteq\cl{H,a_\alpha}\setminus G_\alpha,\qquad |W_\alpha|\le m.
\]
Let $(\eps_\alpha)_{\alpha<\kappa}$ be successor-alternating and define $S$ by \eqref{eq:S}. Then $S=S^{-1}$ and $S\cap H=U$. For $\alpha<\kappa$, $x\in G_\alpha$ and $y\in G_{\alpha+1}\setminus G_\alpha$, the element $yx^{-1}$ lies in $G_{\alpha+1}\setminus G_\alpha$, and $x,y$ are adjacent exactly when $\eps_\alpha\oplus\mathbf1_{W_\alpha}(yx^{-1})=1$; every vertex lies in at most $m$ such pairs $\{x,y\}$ with $yx^{-1}\in W_\alpha$. Moreover:
\begin{enumerate}[label=(\roman*),nosep]
\item\label{lem:layered-aut} $\Aut(\Cay(G,S))=R(G)$;
\item\label{lem:layered-diff} every proper stage $G_\alpha$ has $\kappa$ common neighbors and $\kappa$ common nonneighbors outside $G_\alpha$; hence $S$ generates $G$, and $\Cay(G,S)$ and its complement have diameter at most two;
\item\label{lem:layered-iso} if $S'$ is defined by \eqref{eq:S} from the same $H$, $U$, $(G_\alpha)$, $(W_\alpha)$ and another such bit sequence $(\eps'_\alpha)$, then every isomorphism $\Cay(G,S)\to\Cay(G,S')$ maps the partition of $G$ into right $G_\alpha$-cosets onto itself for every $\alpha\le\kappa$; if such an isomorphism exists, then $\eps'_\alpha=\eps_\alpha$ for every $\alpha<\kappa$.
\end{enumerate}
\end{lemma}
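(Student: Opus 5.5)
We first record the preliminary facts, then prove \ref{lem:layered-aut} by forcing a rooted automorphism to commute with every element of every stage, \ref{lem:layered-diff} by a counting argument over many layers, and \ref{lem:layered-iso} directly from Corollary~\ref{cor:cayleyrecovery}.

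\emph{Preliminaries.} For $\alpha<\kappa$ we have $|G_\alpha|\le|\alpha|+\aleph_0<\kappa$, so some $g_\beta$ lies outside $G_\alpha$ and $a_\alpha$ is defined. By induction, $g_\beta\in G_{\beta+1}$, hence $G_\kappa=G$. The chain is continuous with strict successor inclusions. Symmetry of $S$ and $S\cap H=U$ were observed after \eqref{eq:S}.

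For the layer claim, take $x\in G_\alpha$ and $y\in G_{\alpha+1}\setminus G_\alpha$. Then $yx^{-1}\in G_{\alpha+1}$, and $yx^{-1}\notin G_\alpha$, since otherwise $y\in G_\alpha$. Adjacency is then read off \eqref{eq:S}. Fix a vertex $x$. Its partners $y$ with $yx^{-1}\in W_\alpha$ are the elements $wx$ with $w\in W_\alpha$, and its partners $y$ with $xy^{-1}\in W_\alpha$ are the elements $w^{-1}x$. Since $W_\alpha=W_\alpha^{-1}$, both sets equal $W_\alpha x$, so there are at most $m$ of them.

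\emph{Part \ref{lem:layered-aut}.} The hypotheses of Corollary~\ref{cor:cayleyrecovery} hold, since $H$ is infinite. Let $F$ be an automorphism. After composing with a right translation we may assume $F(1)=1$.

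First, $F$ commutes with $H$. By the corollary, $F$ permutes the right $H$-cosets. It also maps the induced graph on $Hx$, a copy of the GRR $\Cay(H,U)$ via $h\mapsto hx$, isomorphically onto the induced graph on $Hx'$, where $x'=F(x)$. Composing with the identifications $h\mapsto hx$ and $h\mapsto hx'$ gives an automorphism of $\Cay(H,U)$, which must be a right translation. Hence $F(hx)=hF(x)$ for all $h\in H$ and $x\in G$, that is, $F\in\mathcal E_H$.

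Second, $F$ fixes each $a_\alpha$. The corollary gives $F(W_\alpha)=W_\alpha$, so Lemma~\ref{lem:packet}, or Lemma~\ref{lem:booleanpacket} in the Boolean case (where $|H|\ge4$ holds), yields $F(a_\alpha)=a_\alpha$.

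Third, these identities hold at every base point. For any $x$, the map $F_x(y)=F(yx)F(x)^{-1}$ is again a rooted automorphism, because it is a composite with right translations. The first two steps therefore apply to $F_x$. This gives $F(hx)=hF(x)$ for $h\in H$ and $F(a_\alpha x)=a_\alpha F(x)$ for all $x$ and all $\alpha$.

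Finally, let $T=\{g:F(gx)=gF(x)\text{ for all }x\}$. This set is closed under products. It is also closed under inverses, since $F(x)=F(g\,g^{-1}x)=gF(g^{-1}x)$. It contains $H$ and every $a_\alpha$. By transfinite induction $T\supseteq G_\alpha$ for every $\alpha$: successor stages follow from $G_{\alpha+1}=\cl{G_\alpha,a_\alpha}$, and limit stages from unions. Hence $T=G$, and $F(g)=gF(1)=g$ for every $g$, so $F$ is the identity.

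\emph{Part \ref{lem:layered-diff}, the main obstacle.} Fix $\alpha<\kappa$ and consider $\beta\ge\alpha+\omega$. Such a $\beta$ lies above infinitely many strict steps, so $[G_\beta:G_\alpha]$ is infinite. The coset $G_\beta a_\beta$ lies in layer $\beta$ and is a union of infinitely many right $G_\alpha$-cosets. The set $W_\beta G_\alpha$ covers at most $m$ of them, so we can choose $z_\beta\in G_\beta a_\beta\setminus W_\beta G_\alpha$.

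For every $x\in G_\alpha$ we have $z_\beta x^{-1}\notin W_\beta$, so $z_\beta$ is adjacent to $x$ exactly when $\eps_\beta=1$. By successor alternation, each bit value occurs at $\kappa$ many such $\beta$. This gives $\kappa$ common neighbors and $\kappa$ common nonneighbors of $G_\alpha$ outside $G_\alpha$.

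For diameter, take distinct $x,y$ and translate so that $x=1$. Then $y\in G_\alpha$ for some $\alpha<\kappa$, and the previous paragraph gives a common neighbor of $1$ and $y$ in $\Cay(G,S)$, and likewise a common neighbor in the complement. Hence both graphs are connected with diameter at most two, and in particular $S$ generates $G$.

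\emph{Part \ref{lem:layered-iso}.} This is the last assertion of Corollary~\ref{cor:cayleyrecovery}, applied with $\theta=\kappa$.
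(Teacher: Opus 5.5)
Your preliminaries and parts (i) and (iii) are correct and follow the paper's proof. Your subgroup $T$ just writes out explicitly the paper's remark that exact $H$-steps and retained-generator steps generate $G$.

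The gap is in the counting step of part (ii). You choose $z_\beta\in G_\beta a_\beta$ and claim that $W_\beta G_\alpha$ covers at most $m$ of the right $G_\alpha$-cosets making up $G_\beta a_\beta$. But $W_\beta G_\alpha=\bigcup_{w\in W_\beta}wG_\alpha$ is a union of \emph{left} $G_\alpha$-cosets, and one left coset can meet infinitely many right cosets. So nothing bounds how much of $G_\beta a_\beta$ it covers.

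In fact the choice can fail completely. Since $a_\beta\in W_\beta$ and $G_\beta a_\beta=a_\beta(a_\beta^{-1}G_\beta a_\beta)$, whenever $a_\beta^{-1}G_\beta a_\beta\subseteq G_\alpha$ all of $G_\beta a_\beta$ lies in $a_\beta G_\alpha\subseteq W_\beta G_\alpha$, and no $z_\beta$ exists. This happens under the lemma's hypotheses:
\begin{itemize}
\item Let $V=\bigoplus_{(i,j)\in\mathbb Z^2}\F_2e_{i,j}$, with $V_{\ge k}=\cl{e_{i,j}:i\ge k}$.
\item Let $G=(V\rtimes\cl{s})\times H\times B$, where $s^{-1}e_{i,j}s=e_{i+1,j}$, $H$ is a free group of rank two (with a core $U$ from Theorem~\ref{thm:countable}), and $B$ is an uncountable Boolean group.
\item Enumerate $G$ starting with the $e_{i,j}$ with $i\ge1$, then the $e_{0,j}$, then $s$.
\end{itemize}
Then $G_\omega=H\times V_{\ge1}$, $G_{\omega\cdot2}=H\times V_{\ge0}$ and $a_{\omega\cdot2}=s$. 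For $\alpha=\omega$ and $\beta=\omega\cdot2=\alpha+\omega$ this gives $a_\beta^{-1}G_\beta a_\beta=G_\alpha$, even though $[G_\beta:G_\alpha]$ is infinite. So your selection is impossible at this $\beta$. You would need a separate argument that $\kappa$ many good $\beta$ of each bit remain, and none is given.

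The repair is to use the left coset, as the paper does. The set $a_\beta G_\beta$ also lies in layer $\beta$. It is the union of the $[G_\beta:G_\alpha]$ left cosets $a_\beta gG_\alpha$ with $g\in G_\beta$, while $W_\beta G_\alpha$ is a union of at most $m$ left $G_\alpha$-cosets. Index greater than $m$ therefore leaves some $z_\beta\in a_\beta G_\beta\setminus W_\beta G_\alpha$; your version gets this from infinite index, the paper's from $2^n>m$. For $x\in G_\alpha\subseteq G_\beta$ the difference $z_\beta x^{-1}$ stays in layer $\beta$ and avoids $W_\beta$, since $z_\beta\notin W_\beta x$. With this change the rest of your part (ii), including the diameter argument, goes through unchanged.
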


For \ref{lem:layered-aut}, Corollary~\ref{cor:cayleyrecovery} recovers the right $G_\alpha$-coset partitions, the GRR core makes a rooted automorphism $H$-equivariant, the packet lemmas fix each retained generator, and normalizing at every base point extends this to all of $G$.

\begin{proof}
Since $|G_\alpha|\le\max\{\aleph_0,|\alpha|\}<\kappa$, the next choice always exists. Induction shows that $G_\alpha$ contains every $g_\beta$ with $\beta<\alpha$, so $G_\kappa=G$. The chain is strict and has length exactly $\kappa$, a limit ordinal, so there is no last layer. The identity $S=S^{-1}$ was noted with \eqref{eq:S}, and $S\cap H=U$ because the layers miss $G_0=H$. For $x\in G_\alpha$ and $y\in G_{\alpha+1}\setminus G_\alpha$, $yx^{-1}\in G_{\alpha+1}$, and $yx^{-1}\notin G_\alpha$ because $x\in G_\alpha$ and $y\notin G_\alpha$; so \eqref{eq:S} gives its adjacency. For a fixed vertex, distinct partners give distinct elements $yx^{-1}$, so at most $|W_\alpha|\le m$ of them lie in $W_\alpha$.

For \ref{lem:layered-aut}, let $F$ be an automorphism of $\Cay(G,S)$ fixing $1$. Since $H$ is infinite, $|H|>2m$, and Corollary~\ref{cor:cayleyrecovery} shows that $F$ preserves every right $G_\alpha$-coset partition, every layer and every packet $W_\alpha$. For each $x\in G$, the coordinate map $hx\mapsto h$ identifies the induced
graph on $Hx$ with $\Cay(H,U)$, because
$(h_1x)(h_2x)^{-1}=h_1h_2^{-1}$. Write $F(Hx)=Hy$. In these coordinates,
$h\mapsto F(hx)y^{-1}$ is an automorphism of $\Cay(H,U)$.
Since this core graph is a GRR, that map has the form $h\mapsto hk$ for some
$k\in H$. Hence $F(hx)=hky$ and $F(x)=ky$, proving
\begin{equation}\label{eq:Hequiv}
 F(hx)=hF(x)\qquad\text{for all }h\in H,x\in G.
\end{equation}

By \eqref{eq:Hequiv}, $F\in\mathcal E_H$, and the preceding paragraph gives
$F(W_\alpha)=W_\alpha$. In the nonabelian case, Lemma~\ref{lem:packet} with
$M=G_\alpha$ therefore fixes $Ha_\alpha$ pointwise; Lemma~\ref{lem:booleanpacket}
gives the same conclusion in the Boolean case. In particular,
$F(a_\alpha)=a_\alpha$ for every retained generator. The argument so far gives this only at the identity, and we need the same conclusion at every base point. For every $x\in G$, the normalized map
\[
 F_x(y)=F(yx)F(x)^{-1}
\]
is a graph automorphism fixing $1$. Everything proved for the rooted map therefore applies to $F_x$, giving
$F_x(a_\alpha)=a_\alpha$, or $F(a_\alpha x)=a_\alpha F(x)$ at every base point. Exact $H$-steps and all retained generator steps generate $G$, so these local identities propagate to $F(y)=yF(1)$. The automorphism group is therefore $R(G)$.

For \ref{lem:layered-diff}, fix $\alpha<\kappa$ and choose finite $n$ with $2^n>m$. For $\beta\ge\alpha+n$, strict successor inclusions give $[G_\beta:G_\alpha]\ge2^n>m$. The layer coset $a_\beta G_\beta$ contains more than $m$ left $G_\alpha$-cosets, whereas $W_\beta G_\alpha$ is the union of at most $m$ such cosets. Choose
\[
 t\in a_\beta G_\beta\setminus W_\beta G_\alpha.
\]
For every $f\in G_\alpha$, the difference $tf^{-1}$ is in layer $\beta$ and outside $W_\beta$, so its adjacency is the baseline bit $\eps_\beta$. Successor alternation leaves $\kappa$ later layers of each bit, and one witness from each, distinct layers giving distinct witnesses, gives $\kappa$ common neighbors and $\kappa$ common nonneighbors of $G_\alpha$ outside $G_\alpha$. Every pair of vertices lies in some $G_\alpha$, so every distinct pair has a common neighbor and a common nonneighbor; hence $S$ generates $G$, and $\Cay(G,S)$ and its complement have diameter at most two.

For \ref{lem:layered-iso}, Corollary~\ref{cor:cayleyrecovery} recovers the right $G_\alpha$-coset partitions and the baseline bits of $\Cay(G,S)$ and of $\Cay(G,S')$ through the ordinal-indexed recovery of Theorem~\ref{thm:recovery}. By the canonicity in that theorem, every isomorphism $\Cay(G,S)\to\Cay(G,S')$ maps each right $G_\alpha$-coset partition onto itself and preserves the baseline bit at each $\alpha<\kappa$, so $\eps'_\alpha=\eps_\alpha$ when such an isomorphism exists.\end{proof}

\begin{lemma}[Countable cores with finite prescriptions]\label{lem:core}
Let $G$ be an infinite group in a regular case, and let $A,R\subseteq G\setminus\{1\}$ be finite, disjoint and symmetric. There are a countably infinite subgroup $H\le G$ containing $A\cup R$, in a regular case and nonabelian unless $G$ is Boolean, and a symmetric set $U\subseteq H\setminus\{1\}$ with $A\subseteq U$ and $U\cap R=\varnothing$, such that $\Cay(H,U)$ is a GRR whose robust-marker edge relation $M_{76}$ is connected.
\end{lemma}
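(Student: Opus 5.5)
The plan is to build $H$ as a countable subgroup generated by finitely many witnesses, and then to obtain $U$ from Theorem~\ref{thm:countable} with $m=76$.

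\emph{Choice of $H$.} Begin with a finitely generated subgroup $H_0$ that contains $A\cup R$.
\begin{itemize}[nosep]
\item If $G$ is Boolean, every subgroup is Boolean and hence in a regular case.
\item If $G$ is nonabelian and not generalized dicyclic, Lemma~\ref{lem:witness} supplies a finitely generated subgroup $K$ that is also nonabelian and not generalized dicyclic. Enlarge $H_0$ to $\cl{H_0,K}$.
\end{itemize}
Being nonabelian and non-generalized-dicyclic passes upward to overgroups inside $G$ only partially, so this needs a check. The second statement of Lemma~\ref{lem:witness} says that every nonabelian subgroup of a generalized dicyclic group is generalized dicyclic. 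Consequently, any subgroup containing $K$ that were generalized dicyclic would force $K$ to be generalized dicyclic, a contradiction. Nonabelianness passes upward trivially. So every subgroup of $G$ containing $K$ is in the nonabelian regular case.

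Now $H_0$ is countable but may be finite. If so, adjoin a countable set of further elements of $G$; this is possible because $G$ is infinite. Taking the generated subgroup gives a countably infinite $H\le G$ that contains $A\cup R$ and is in a regular case, nonabelian unless $G$ is Boolean. The Boolean case needs $|H|\ge4$ for later use, and this holds automatically since $H$ is infinite.

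\emph{Choice of $U$.} Apply Theorem~\ref{thm:countable} to $H$ with $m=76$ and the prescription $A_0=A$, $R_0=R$. These are finite, disjoint, symmetric subsets of $H\setminus\{1\}$. The theorem gives a symmetric set $U=U_b$ with:
\begin{itemize}[nosep]
\item $A\subseteq U$ and $U\cap R=\varnothing$;
\item $\Aut(\Cay(H,U))=\Xi(H)$;
\item every finite vertex set contained in a finite induced $76$-robust subgraph.
\end{itemize}
By Lemma~\ref{lem:robustunion}(iii), the last property is equivalent to connectedness of $(H,M_{76})$, since $H$ has at least two vertices. Since $H$ is in a regular case, Theorem~\ref{thm:inversepairindex} gives $[\Xi(H):R(H)]=1$, so $\Aut(\Cay(H,U))=R(H)$ and $\Cay(H,U)$ is a GRR.

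\emph{Main obstacle.} The only delicate step is the upward heredity of the regular case, as the nonabelian regular property is not closed under arbitrary enlargements in general. The contrapositive of the hereditary clause of Lemma~\ref{lem:witness} handles it: an overgroup of $K$ that were generalized dicyclic would make the nonabelian subgroup $K$ generalized dicyclic. The rest consists of direct citations.
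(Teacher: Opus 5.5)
Your proposal is correct and follows essentially the same route as the paper: take the finitely generated witness of Lemma~\ref{lem:witness} (or nothing in the Boolean case), adjoin $A\cup R$ and a countably infinite subset of $G$, use the hereditary clause of that lemma to keep $H$ out of the generalized dicyclic class, and then apply Theorem~\ref{thm:countable} with $m=76$, Theorem~\ref{thm:inversepairindex}, and Lemma~\ref{lem:robustunion}(iii). Your own argument in fact shows that the nonabelian regular property passes to every overgroup of $K$ inside $G$, so your remarks that it is not closed under enlargement are unnecessary (and slightly misleading); the proof itself is unaffected.
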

\begin{proof}
If $G$ is nonabelian, let $H_0$ be the finitely generated witness of Lemma~\ref{lem:witness}; if $G$ is Boolean, let $H_0=\{1\}$. Let $H$ be the subgroup generated by $H_0$, $A\cup R$ and a countably infinite subset of $G$. It is countably infinite. In the nonabelian case it contains $H_0$, so it is nonabelian, and the hereditary statement prevents it from being generalized dicyclic; in the Boolean case it is Boolean. Thus $H$ is in a regular case, and Theorem~\ref{thm:inversepairindex} gives $\Xi(H)=R(H)$. Theorem~\ref{thm:countable}, applied to $H$ with the prescriptions $A$ and $R$ and with $m=76$, supplies a symmetric $U\subseteq H\setminus\{1\}$ with $A\subseteq U$, $U\cap R=\varnothing$, $\Aut(\Cay(H,U))=\Xi(H)$, so $\Cay(H,U)$ is a GRR. That theorem also puts every finite vertex set of $\Cay(H,U)$ in a finite induced $76$-robust subgraph, so by Lemma~\ref{lem:robustunion}(iii), as $H$ has at least two vertices, its robust-marker edge relation $M_{76}$ is connected. \end{proof}

\subsection{Varying the limit bits}
Let $G$ be uncountable of cardinality $\kappa$, in a regular case, and let $A,R\subseteq G\setminus\{1\}$ be finite, disjoint and symmetric. Fix once and for all the core $H,U$ given by Lemma~\ref{lem:core} for $A,R$, and over it the chain $(G_\alpha)_{\alpha\le\kappa}$, with retained generators $a_\alpha$, and the packets $W_\alpha$ of Lemma~\ref{lem:layered}; only the free bits will vary.

Let $\Lambda$ consist of zero and all nonzero limit ordinals below $\kappa$. It has cardinality $\kappa$. The free bits at these indices are the coordinates that will encode the isomorphism type. Every $\alpha<\kappa$ has a unique expression $\alpha=\lambda+n$ with $\lambda\in\Lambda$ and $n<\omega$. For a map $b:\Lambda\to\{0,1\}$ set
\[
 \eps^b_{\lambda+n}=b(\lambda)+n\pmod2,
 \qquad
 \mathbf1_{S_b}(g)=\eps^b_{r(g)}\oplus\mathbf1_{W_{r(g)}}(g)
 \quad\text{for all }g\notin H,
\]
with $S_b\cap H=U$. All core and packet choices are independent of $b$.

\begin{theorem}[The prescribed uncountable cube]\label{thm:cube}
Let $G$ be an uncountable regular-case group of cardinality $\kappa$, let $A,R\subseteq G\setminus\{1\}$ be finite, disjoint and symmetric, and let $S_b$, for $b\in\{0,1\}^{\Lambda}$, be the connection sets defined above over the core of Lemma~\ref{lem:core} for $A,R$ and the chain and packets of Lemma~\ref{lem:layered}. Then:
\begin{enumerate}[label=(\roman*),nosep]
\item\label{thm:cube-cube} with the product topologies of Section~\ref{sec:notation}, the map $b\mapsto S_b$ is a topological embedding of $\{0,1\}^{\Lambda}$ into $\mathcal X_G$ whose image is closed and lies in the cylinder $\{S\in\mathcal X_G:A\subseteq S,\ S\cap R=\varnothing\}$; since $|\Lambda|=\kappa$, the image is a closed copy of $\{0,1\}^{\kappa}$;
\item\label{thm:cube-grr} every $\Cay(G,S_b)$ is a GRR, and distinct $b$ give nonisomorphic uncolored graphs;
\item\label{thm:cube-witness} in every $\Cay(G,S_b)$, each proper stage $G_\alpha$, and hence each $F\subseteq G$ with $|F|<\cf(\kappa)$, has $\kappa$ common neighbors and $\kappa$ common nonneighbors outside itself; in particular every distinct pair has $\kappa$ common neighbors and $\kappa$ common nonneighbors, and $\Cay(G,S_b)$ and its complement both have diameter two.
\end{enumerate}
\end{theorem}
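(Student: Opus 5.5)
The plan is to derive all three parts from Lemma~\ref{lem:layered}, applied to each bit sequence $\eps^b$, together with a direct check of the topology. First I verify that every $\eps^b$ is successor-alternating. If $\alpha=\lambda+n$, then $\alpha+1=\lambda+(n+1)$ with the same $\lambda$, so $\eps^b_{\alpha+1}=1-\eps^b_\alpha$. The core $H,U$, the chain, and the packets are independent of $b$, so each $S_b$ is one of the sets covered by Lemma~\ref{lem:layered}. Part~\ref{lem:layered-aut} of that lemma then gives that $\Cay(G,S_b)$ is a GRR. For nonisomorphism, suppose $\Cay(G,S_b)\cong\Cay(G,S_{b'})$. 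Lemma~\ref{lem:layered}\ref{lem:layered-iso} gives $\eps^b_\alpha=\eps^{b'}_\alpha$ for all $\alpha<\kappa$. Evaluating at $\alpha=\lambda\in\Lambda$ yields $b(\lambda)=b'(\lambda)$. This proves \ref{thm:cube-grr}.

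For \ref{thm:cube-cube}, I first check the prescription. We have $A\cup R\subseteq H$ and $S_b\cap H=U$. Lemma~\ref{lem:core} gives $A\subseteq U$ and $U\cap R=\varnothing$, so every $S_b$ lies in the stated cylinder. For continuity, fix an inverse pair $\invp{g}$. If $g\in H$, its coordinate is constant. Otherwise write $r(g)=\lambda+n$; the coordinate is then $b(\lambda)+n\oplus\mathbf1_{W_{r(g)}}(g)$, which depends on the single coordinate $b(\lambda)$. Hence every coordinate of $b\mapsto S_b$ is continuous, and the map is continuous in the product topology. For injectivity, take $b\ne b'$ differing at some $\lambda$. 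The layer $G_{\lambda+1}\setminus G_\lambda$ contains an element outside the finite set $W_\lambda$, because the layer is infinite (it contains the nontrivial coset $a_\lambda G_\lambda$ and $G_\lambda\supseteq H$ is infinite). At that element $S_b$ and $S_{b'}$ differ. Since $\{0,1\}^\Lambda$ is compact and $\mathcal X_G$ is Hausdorff, a continuous injection is an embedding with closed image. Finally, $|\Lambda|=\kappa$ because the limit ordinals below $\kappa$ have cardinality $\kappa$, so the image is a closed copy of $\{0,1\}^\kappa$.

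For \ref{thm:cube-witness}, Lemma~\ref{lem:layered}\ref{lem:layered-diff} gives the claim for every proper stage $G_\alpha$. Now take $F\subseteq G$ with $|F|<\cf(\kappa)$. Each $f\in F$ lies in some stage $G_{\alpha_f}$, and $\sup_f\alpha_f<\kappa$ by the definition of cofinality. So $F\subseteq G_\alpha$ for some $\alpha<\kappa$. Common neighbors of $G_\alpha$ lying outside $G_\alpha$ are common neighbors of $F$ lying outside $F$, and the same holds for nonneighbors. Pairs are the case $|F|=2$, which is allowed because $\cf(\kappa)$ is infinite. Diameter at most two then follows for both the graph and its complement. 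Equality holds because neither graph is complete: both $S_b$ and its complement are nonempty, since layers of each bit occur.

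I expect the main obstacle to be the nonisomorphism in \ref{thm:cube-grr}. It rests entirely on the canonical, ordinal-indexed form of the recovery, which is where the fixed-chain hypothesis of Lemma~\ref{lem:layered}\ref{lem:layered-iso} is used. I need to make sure that the recovered index $\lambda$ on one graph matches the same $\lambda$ on the other. That is exactly the clause $(f\times f)[E_\alpha]=E'_\alpha$ of Theorem~\ref{thm:recovery}, so I would invoke it directly rather than compare heights.
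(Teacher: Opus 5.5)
Your proposal is correct and follows the paper's proof essentially step for step. Both arguments use successor alternation of $\eps^b$, then Lemma~\ref{lem:layered}\ref{lem:layered-aut} and \ref{lem:layered-iso} for the GRR and nonisomorphism claims, one-bit coordinates plus compactness for the closed embedding, and Lemma~\ref{lem:layered}\ref{lem:layered-diff} with the cofinality bound for the witnesses.

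The deviations are minor. You separate $S_b$ from $S_{b'}$ at a layer-$\lambda$ element outside $W_\lambda$, whereas the paper reads $b(\lambda)=1-\mathbf1_{S_b}(a_\lambda)$ from $a_\lambda\in W_\lambda$. You rule out completeness because $S_b$ and its complement are nonempty, whereas the paper uses $R(G)\ne\Sym(G)$.
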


\begin{proof}
Each sequence $(\eps^b_\alpha)$ is successor-alternating, so Lemma~\ref{lem:layered} applies to every $S_b$, with the same core, chain and packets.

\emph{Translation recovery.} By Lemma~\ref{lem:layered}\ref{lem:layered-aut}, $\Aut(\Cay(G,S_b))=R(G)$, and \eqref{eq:sandwich} gives equality with $\Xi(G)$; so every graph of the family is a GRR.

\emph{Isomorphism recovery.} If $\Cay(G,S_b)\cong\Cay(G,S_{b'})$, Lemma~\ref{lem:layered}\ref{lem:layered-iso} gives $\eps^b_\alpha=\eps^{b'}_\alpha$ for every $\alpha<\kappa$; at $\alpha=\lambda\in\Lambda$ this reads $b(\lambda)=b'(\lambda)$, so $b=b'$. This proves \ref{thm:cube-grr}. Every $\alpha<\kappa$ is uniquely $\lambda+n$ with $\lambda\in\Lambda$ and $n<\omega$, so $\kappa=|\Lambda|\cdot\aleph_0$; since $\kappa$ is uncountable, $|\Lambda|=\kappa$.

\emph{Continuity and bit recovery.} For $g\notin H$ of rank $\lambda+n$, with $\lambda\in\Lambda$ and $n<\omega$,
\[
 \mathbf1_{S_b}(g)=b(\lambda)\oplus(n\bmod2)\oplus\mathbf1_{W_{\lambda+n}}(g).
\]
Each such coordinate depends on one input bit, while $S_b\cap H=U$ makes the coordinates inside $H$ constant, so $b\mapsto S_b$ is continuous. The packet generator $a_\lambda$ lies in $W_\lambda$, so $b(\lambda)=1-\mathbf1_{S_b}(a_\lambda)$; the map is injective, and its inverse on the image is continuous. The domain is compact and $\mathcal X_G$ is Hausdorff, so the image is closed. Since $A\cup R\subseteq H$, $A\subseteq U$ and $U\cap R=\varnothing$, the identity $S_b\cap H=U$ places every $S_b$ in the cylinder. This proves \ref{thm:cube-cube}.

\emph{Later-layer witnesses.} Lemma~\ref{lem:layered}\ref{lem:layered-diff} gives the assertion for each proper stage. If $|F|<\cf(\kappa)$, the ranks of the elements of $F\setminus H$ are bounded below $\kappa$, so $F\subseteq G_\alpha$ for some $\alpha<\kappa$. Pairs have size $2<\cf(\kappa)$, so each graph and its complement have diameter at most two; neither is complete, as $\Aut(\Cay(G,S_b))=R(G)\ne\Sym(G)$, so both diameters are two.
\end{proof}

\begin{theorem}[Regular-case existence]\label{thm:regularExist}
Every infinite regular-case group has a symmetric generating set $S$ for which
\[
\Aut(\Cay(G,S))=R(G)=\Xi(G),\qquad\diam\Cay(G,S)\le2.
\]
\end{theorem}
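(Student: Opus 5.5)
The plan is to split by cardinality. The countable case follows from Theorem~\ref{thm:countable}, and the uncountable case follows from Theorem~\ref{thm:cube}. In both cases the identity $R(G)=\Xi(G)$ comes from Theorem~\ref{thm:inversepairindex}: a regular-case group is Boolean or is neither abelian nor generalized dicyclic, so the index $[\Xi(G):R(G)]$ is $1$. Together with \eqref{eq:sandwich}, it therefore suffices to produce a symmetric $S$ with $\Aut(\Cay(G,S))=\Xi(G)$, diameter at most two, and such that $S$ generates $G$.

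\emph{Countable case.} Let $G$ be countably infinite. Apply Theorem~\ref{thm:countable} with $H=G$, empty prescription $A_0=R_0=\varnothing$ and any fixed $m$, say $m=0$, and take any member $U_b$ of the resulting family. That theorem gives $\Aut(\Cay(G,U_b))=\Xi(G)$ and diameter two. Diameter two forces connectedness, so $U_b$ generates $G$.

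\emph{Uncountable case.} Let $G$ have cardinality $\kappa>\aleph_0$. Take $A=R=\varnothing$ in Lemma~\ref{lem:core}, which gives a countable core $H,U$. Build the chain and packets of Lemma~\ref{lem:layered}, and set $S=S_b$ for any $b\in\{0,1\}^\Lambda$, for instance $b\equiv0$. Theorem~\ref{thm:cube}\ref{thm:cube-grr} then gives $\Aut(\Cay(G,S))=R(G)$. Theorem~\ref{thm:cube}\ref{thm:cube-witness}, or Lemma~\ref{lem:layered}\ref{lem:layered-diff}, gives diameter two, and generation again follows from connectedness. Alternatively, one can cite Lemma~\ref{lem:layered} directly, since the core there is a GRR by Lemma~\ref{lem:core}.

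There is essentially no obstacle, since the work lies in the earlier results. The only point to check is that the hypotheses of Lemma~\ref{lem:layered} are met. That lemma asks for a nonabelian core unless $G$ is Boolean, a GRR core, and a connected $M_{76}$. Lemma~\ref{lem:core} supplies all three. The lemma also requires $|H|>2m$, which holds because $H$ is infinite.
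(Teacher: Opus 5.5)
Your proposal is correct and follows the paper's proof essentially verbatim. The countable case comes from Theorem~\ref{thm:countable} together with Theorem~\ref{thm:inversepairindex}, the uncountable case from Theorem~\ref{thm:cube} with $A=R=\varnothing$, and generation follows from diameter two. The only difference is that you invoke Theorem~\ref{thm:inversepairindex} rather than \eqref{eq:sandwich} for $R(G)=\Xi(G)$ in the uncountable case, and either route works.
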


\begin{proof}[Proof of Theorem~\ref{thm:regularExist}]
For countable $G$, Theorem~\ref{thm:countable} supplies a connection set $S$ with $\Aut(\Cay(G,S))=\Xi(G)$ and $\diam\Cay(G,S)=2$, and $\Xi(G)=R(G)$ by Theorem~\ref{thm:inversepairindex}. For uncountable $G$, Theorem~\ref{thm:cube}, with $A=R=\varnothing$, supplies $S=S_b$ with $\Aut(\Cay(G,S))=R(G)$ and $\diam\Cay(G,S)=2$, and \eqref{eq:sandwich} gives equality with $\Xi(G)$. In both cases diameter two makes $S$ generate $G$.
\end{proof}

\begin{proof}[Proof of Theorem~\ref{thm:watkins}]
Let $G$ be an infinite group that is neither abelian of exponent greater than two nor generalized dicyclic. If $G$ is abelian, it has no element of order greater than two, so $G$ is Boolean. Otherwise $G$ is nonabelian and not generalized dicyclic. In either case $G$ is in a regular case, and Theorem~\ref{thm:regularExist} gives a symmetric generating set $S$ with $\Aut(\Cay(G,S))=R(G)$, so $\Cay(G,S)$ is a graphical regular representation of $G$.
\end{proof}

\begin{remark}
Both exclusions in Theorem~\ref{thm:watkins} are necessary: if $G$ is abelian of exponent greater than two, inversion, and if $G$ is generalized dicyclic, with $A$ the abelian subgroup of index two of the definition, the map fixing $A$ pointwise and inverting every element outside $A$, is a nontrivial automorphism of $G$ sending each element to itself or its inverse. It therefore preserves every connection set $S$ and is a nontrivial automorphism of $\Cay(G,S)$ fixing $1$, so neither class has a graphical regular representation, whatever its cardinality.
\end{remark}

\begin{corollary}[Exact domination and large homogeneous sets]\label{cor:domination}
Let $G,\kappa$ and $S_b$ be as in Theorem~\ref{thm:cube},
and let $\Gamma=\Cay(G,S_b)$. Then
\[
\gamma(\Gamma)=\gamma(\overline\Gamma)=\cf(\kappa),\qquad
\omega(\Gamma)=\operatorname{ind}(\Gamma)=\chi(\Gamma)=\kappa.
\]
Here $\gamma$ is the least size of a dominating set (every vertex outside it has a neighbor in it), and $\omega,\operatorname{ind},\chi$ denote clique, independence, and chromatic numbers, respectively. The displayed clique and independent-set cardinalities are attained.
\end{corollary}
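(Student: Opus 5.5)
We must prove $\gamma(\Gamma)=\gamma(\overline\Gamma)=\cf(\kappa)$ and $\omega=\operatorname{ind}=\chi=\kappa$ for $\Gamma=\Cay(G,S_b)$, with the clique and independence numbers attained. The plan is to treat domination and homogeneous sets separately, using only the layered structure \eqref{eq:S} and Theorem~\ref{thm:cube}\ref{thm:cube-witness}.

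\emph{Domination, lower bound.} Let $F\subseteq G$ with $|F|<\cf(\kappa)$. As in the proof of Theorem~\ref{thm:cube}, $F\subseteq G_\alpha$ for some $\alpha<\kappa$. Part~\ref{thm:cube-witness} gives $\kappa$ common nonneighbors of $G_\alpha$ outside $G_\alpha$. Any such vertex lies outside $F$ and has no neighbor in $F$, so $F$ is not dominating. Common neighbors play the same role for $\overline\Gamma$. Hence both domination numbers are at least $\cf(\kappa)$.

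\emph{Domination, upper bound.} Fix a cofinal set of limit indices $\lambda_\xi$, $\xi<\cf(\kappa)$, and choose in each layer one vertex $t_\xi$ that sees a prescribed vertex set with baseline adjacency. A cleaner choice is to pick a strictly increasing cofinal sequence $(\beta_\xi)_{\xi<\cf(\kappa)}$ with $\eps_{\beta_\xi}=1$; successor alternation lets us shift by one. Then take $D=\{t_\xi\}$ with $t_\xi\in G_{\beta_\xi+1}\setminus G_{\beta_\xi}$ avoiding the finitely many bad left cosets.

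Given $y\notin D$, let $y\in G_\beta$ and take $\xi$ with $\beta_\xi\ge\beta$. Then $t_\xi y^{-1}$ lies in layer $\beta_\xi$, and is adjacent unless $t_\xi y^{-1}\in W_{\beta_\xi}$. This exception can hold for at most $m$ values of $y$ relative to $t_\xi$, but $y$ is arbitrary, so we need more room. The fix is to use, for each $\xi$, a block of $m+1$ vertices $t_\xi^{(0)},\dots,t_\xi^{(m)}$ in distinct left $G_{\beta_\xi}$-cosets. Since $W_{\beta_\xi}y$ meets at most $m$ of those cosets, some $t_\xi^{(j)}$ is adjacent to $y$. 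This keeps $|D|=\cf(\kappa)$. For $\overline\Gamma$ the same argument applies with $\eps_{\beta_\xi}=0$.

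\emph{Cliques and independent sets.} Here I expect the main obstacle: we must attain size $\kappa$ inside a graph whose adjacency is only baseline up to packets. The plan is a transfinite greedy selection. Choose $x_\alpha$ for $\alpha$ ranging over indices with $\eps_\alpha=1$, taking $x_\alpha\in G_{\alpha+1}\setminus G_\alpha$. All earlier chosen vertices lie in $G_\alpha$, so the difference $x_\alpha x_\beta^{-1}$ lies in layer $\alpha$, and adjacency is $1$ unless $x_\alpha\in W_\alpha x_\beta$.

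The forbidden set $\bigcup_\beta W_\alpha x_\beta$ may have size $|\alpha|$, which can be all of the layer's left cosets count. Working in left $G_\alpha$-cosets does not help, since the condition is on right translates. Instead I would restrict to differences in the layer coset: pick $x_\alpha\in a_\alpha G_\alpha$ outside $W_\alpha G_\alpha$. Then $x_\alpha x_\beta^{-1}\in a_\alpha G_\alpha$, because $x_\beta\in G_\alpha$. If $a_\alpha G_\alpha\cap W_\alpha G_\alpha$ is nonempty, use instead a later layer of larger index, as in Lemma~\ref{lem:layered}\ref{lem:layered-diff}, where $[G_{\beta}:G_\alpha]>m$. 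Concretely, I would select indices $\alpha_\xi$ with $\alpha_{\xi+1}\ge\alpha_\xi+n$ and the appropriate bit. The element $t\in a_\beta G_\beta\setminus W_\beta G_{\alpha}$ from that proof has baseline adjacency to all of $G_\alpha$. This yields a clique of size $\kappa$, and the same with bit $0$ yields an independent set of size $\kappa$.

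Then $\chi(\Gamma)\ge\omega(\Gamma)=\kappa$, and $\chi\le|G|=\kappa$.
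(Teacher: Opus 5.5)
Your proposal is correct after one small repair in the domination step. The domination argument is otherwise the paper's, while the clique argument takes a genuinely different route.

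\textbf{Domination.} The lower bound comes from Theorem~\ref{thm:cube}\ref{thm:cube-witness} applied to a stage $G_\alpha\supseteq F$. The upper bound takes $\cf(\kappa)$ cofinal bit-one (respectively bit-zero) layers with $m+1$ vertices from each. Here you should drop the requirement that the $m+1$ vertices lie in distinct left $G_{\beta_\xi}$-cosets.
\begin{itemize}
\item It cannot be met when $[G_{\beta_\xi+1}:G_{\beta_\xi}]\le m+1$. In the Boolean case every layer $G_{\beta+1}\setminus G_\beta=a_\beta G_\beta$ is a single left coset.
\item It is also unnecessary. For $y\in G_{\beta_\xi}$, the layer vertices nonadjacent to $y$ form exactly $W_{\beta_\xi}y$, which has at most $m$ elements. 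So any $m+1$ distinct vertices of the layer include a neighbor of $y$.
\end{itemize}

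\textbf{Homogeneous sets: the paper's route.} The paper chooses one vertex in every bit-one layer. In rank order, each chosen vertex has at most $m$ earlier nonneighbors, because those differences are distinct elements of its layer's packet. It greedily colors this nonadjacency graph with $m+1$ colors and keeps a color class of size $\kappa$.

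\textbf{Your route.} You build the clique directly by transfinite recursion from the witness of Lemma~\ref{lem:layered}\ref{lem:layered-diff}: a point $t\in a_\beta G_\beta\setminus W_\beta G_\alpha$ is adjacent to all of $G_\alpha$. Your first same-layer attempt never applies, since $a_\alpha\in W_\alpha$ forces $a_\alpha G_\alpha\subseteq W_\alpha G_\alpha$; only the later-layer version works.

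This route needs bookkeeping that you leave implicit.
\begin{itemize}
\item The gap must be measured from $\beta_\eta+1$, the stage containing $x_\eta$, not from $\beta_\eta$.
\item At limit stages the supremum of the earlier indices must stay below $\kappa$. This is not automatic for singular $\kappa$: choosing indices near $\omega_k$, $k<\omega$, at $\kappa=\aleph_\omega$ exhausts the chain at stage $\omega$.
\end{itemize}
A safe choice is $\beta_\xi\in\{\omega\cdot\xi+n,\ \omega\cdot\xi+n+1\}$ with $\eps_{\beta_\xi}=1$ and $2^n>m$. Then every earlier $x_\eta$ lies in $G_{\omega\cdot\xi}$, and $\omega\cdot\xi<\kappa$. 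The point $x_\xi\in a_{\beta_\xi}G_{\beta_\xi}\setminus W_{\beta_\xi}G_{\omega\cdot\xi}$ is then adjacent to all of them.

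\textbf{Comparison.} Your version avoids coloring and yields a clique whose members are each adjacent to an entire initial stage. The paper's version needs no spacing or cofinality control, since any partition of $\kappa$ vertices into $m+1$ classes has a class of size $\kappa$. The independent-set case (bit-zero layers) and the chromatic-number step are the same in both.
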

\begin{proof}
Theorem~\ref{thm:cube}\ref{thm:cube-witness} gives a common nonneighbor outside every set of size less than $\delta=\cf(\kappa)$, so no such set dominates; its common-neighbor statement gives the same lower bound for the complement. To match that lower bound, choose, along the chain of length $\kappa$ of Lemma~\ref{lem:layered}, a cofinal family of bit-one layers of size $\delta$ and take $m+1$ vertices from each. Above any given vertex, at most $m$ of these $m+1$ fail to be adjacent to it, by the layer rule of Lemma~\ref{lem:layered}. Their union is therefore a dominating set of size $\delta$. Bit-zero layers give the complementary upper bound.

For the large homogeneous sets, choose one vertex from each bit-one layer. In rank order, each vertex has at most $m$ earlier nonneighbors by the layer rule of Lemma~\ref{lem:layered}: each such difference belongs to its own packet, and distinct earlier vertices give distinct differences. Greedily color this nonadjacency graph with $m+1$ colors. One color class has size $\kappa$; that class is a clique of $\Gamma$. Repeating the argument with bit-zero layers and adjacency gives an independent set of size $\kappa$, and the clique forces chromatic number $\kappa$.
\end{proof}
The domination formula also identifies the sharp common-nonneighbor threshold for these families: it is $<\cf(\kappa)$. At regular $\kappa$ this covers every smaller set; at singular $\kappa$, a dominating set of size $\cf(\kappa)$ prevents extending the assertion to all sets of size $<\kappa$.

\begin{corollary}[Connectivity after vertex deletion]\label{cor:connectivity}
Let $G,\kappa$ and $S_b$ be as in Theorem~\ref{thm:cube},
and let $\Gamma=\Cay(G,S_b)$. Deleting fewer than $\kappa$ vertices leaves both the graph and its complement of diameter at most two. Each distinct pair has $\kappa$ internally vertex-disjoint two-edge paths before deletion.
\end{corollary}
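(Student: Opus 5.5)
The key input is Theorem~\ref{thm:cube}\ref{thm:cube-witness}: every set $F$ of size less than $\cf(\kappa)$ has $\kappa$ common neighbors and $\kappa$ common nonneighbors outside itself. That is too weak for deletions of up to $\kappa$ vertices when $\kappa$ is singular. So I will work directly with the layer structure of Lemma~\ref{lem:layered}, using only that deleted sets have size less than $\kappa$.

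First, fix a distinct pair $x,y$ and choose $\alpha<\kappa$ with $x,y\in G_\alpha$. The proof of Lemma~\ref{lem:layered}\ref{lem:layered-diff} supplies, for every $\beta\ge\alpha+n$ with $2^n>m$, a witness $t_\beta\in a_\beta G_\beta\setminus W_\beta G_\alpha$. Each such witness is adjacent to all of $G_\alpha$ when $\eps_\beta=1$ and nonadjacent to all of $G_\alpha$ when $\eps_\beta=0$. I will strengthen this count. The coset $a_\beta G_\beta$ has cardinality $|G_\beta|$, while $W_\beta G_\alpha$ has cardinality at most $m|G_\alpha|$. Taking $\beta$ large, so that $|G_\beta|>|G_\alpha|$ or both are infinite with $[G_\beta:G_\alpha]$ infinite, gives $|G_\beta|$ witnesses in layer $\beta$.

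Summing over the bit-one layers $\beta$ above $\alpha+n$ then gives a set $T_1$ of common neighbors of $G_\alpha$. These layers are pairwise disjoint, and since $|G_\beta|$ is unbounded in $\kappa$, we get $|T_1|=\kappa$. Similarly the bit-zero layers give a set $T_0$ of size $\kappa$ of common nonneighbors. Indeed $|a_\beta G_\beta\setminus W_\beta G_\alpha|=|G_\beta|$ as soon as $|G_\beta|>|G_\alpha|\cdot m$, or as soon as $|G_\beta|$ is infinite and $[G_\beta:G_\alpha]$ is infinite. Both hold for all $\beta\ge\alpha+\omega$.

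For the path statement, the sets $\{t\}$ for $t\in T_1\setminus\{x,y\}$ give $\kappa$ two-edge paths $x\sim t\sim y$. Their interiors are single distinct vertices, so they are internally vertex-disjoint.

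For deletion, let $D$ have $|D|<\kappa$. Any distinct pair $x,y$ outside $D$ still has $|T_1\setminus D|=\kappa$ common neighbors, hence distance at most two in $\Gamma-D$. In the complement, $T_0\setminus D$ supplies common neighbors in the same way.

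The main obstacle is the cardinality estimate at singular $\kappa$. The bare layer count gives only $\cf(\kappa)$ layers below $\kappa$ beyond any bound. The estimate therefore relies on counting witnesses inside each layer, with $|G_\beta|$ cofinal in $\kappa$. I expect to check carefully that $|a_\beta G_\beta\setminus W_\beta G_\alpha|=|G_\beta|$ when $G_\beta$ is large compared with $G_\alpha$. This holds because $W_\beta G_\alpha$ has size at most $m|G_\alpha|$, while the layer coset has size $|G_\beta|$. For countable $G_\alpha$ and $G_\beta$, I will instead use the infinite index of $G_\alpha$ in $G_{\alpha+\omega}$.
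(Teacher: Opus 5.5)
Your proof is correct, and its skeleton is the same as the paper's. You produce $\kappa$ common neighbors and $\kappa$ common nonneighbors for each distinct pair. Distinct common neighbors give internally vertex-disjoint two-edge paths. Removing fewer than $\kappa$ vertices from a set of $\kappa$ witnesses leaves $\kappa$ of them.

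The paper gets the count in one line, from Theorem~\ref{thm:cube}\ref{thm:cube-witness} applied to the pair. Your reason for not citing it rests on two misreadings.

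\emph{What the bound restricts.} The hypothesis $|F|<\cf(\kappa)$ there restricts the set whose common neighbors are counted, not the set being deleted. A pair has size $2<\cf(\kappa)$, so it already has $\kappa$ common neighbors and $\kappa$ common nonneighbors. If $|T|=\kappa$ and $|D|<\kappa$, then $|T\setminus D|=\kappa$, for singular $\kappa$ as well.

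\emph{The number of layers.} The chain of Lemma~\ref{lem:layered} has length $\kappa$. So above any $\alpha<\kappa$ there remain $\kappa$ layers of each bit, not $\cf(\kappa)$. One witness per layer already gives $\kappa$ witnesses, which is exactly the argument of Lemma~\ref{lem:layered}\ref{lem:layered-diff}.

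Your count of $|G_\beta|$ witnesses inside each layer is valid but adds nothing here. Its concluding justification also needs repair. The claim ``$|G_\beta|$ is unbounded in $\kappa$'' fails when $\kappa=\mu^+$ is a successor cardinal, since then $|G_\beta|\le\mu$ for all $\beta<\kappa$. In that case $|T_1|=\kappa$ holds because there are $\kappa$ bit-one layers, which is the layer count you set out to avoid.
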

\begin{proof}
Every common neighbor supplies one of the required two-edge paths, with distinct common neighbors giving internally vertex-disjoint paths. By Theorem~\ref{thm:cube}\ref{thm:cube-witness}, applied to the pair, each distinct pair has $\kappa$ common neighbors. Removing fewer than $\kappa$ vertices cannot exhaust a set of $\kappa$ such witnesses, so the surviving graph still has diameter at most two. The same argument applies to the complement.
\end{proof}

\section{Exceptional families: abundance and diameter two}\label{sec:exceptions}
The exceptional families require a different decoder because their unavoidable inverse-pair symmetries are larger than the right-regular action. We use the finite-color result proved in Appendix~\ref{app:finitecolor}. Countably infinite exceptional groups already satisfy the abundance conclusion by Theorem~\ref{thm:countable}; for uncountable groups we first build decoders, then refine and complement them, so the intermediate diameter bounds below are not the final existential bound.

For a group $K$ and a coloring $c$ of $K\setminus\{1\}$, $\Aut_c(K)$ is the group of permutations $F$ of $K$ with $c(F(x)F(y)^{-1})=c(xy^{-1})$ for all distinct $x,y\in K$, so the colors constrain every pair of distinct vertices; Appendix~\ref{app:finitecolor} gives the formal version.

Let $G$ be uncountable of cardinality $\kappa$, with $A=G$ in the abelian case and $G=\cl{A,q}$, $A$ abelian of index two and $z=q^2$, in the generalized dicyclic case, and put $\operatorname{Sq}(A)=\{x^2:x\in A\}$. The cases are treated as follows.
\begin{center}
\begin{tabular}{>{\raggedright\arraybackslash}p{0.22\linewidth}>{\raggedright\arraybackslash}p{0.25\linewidth}>{\raggedright\arraybackslash}p{0.25\linewidth}>{\raggedright\arraybackslash}p{0.18\linewidth}}
\hline
algebraic situation & decoder & full complement codegrees & binary refinement\\
\hline
$G$ abelian, $|\operatorname{Sq}(A)|=\kappa$ & codegree bands, Lemma~\ref{lem:abelianbands} & Lemma~\ref{lem:complement} & $(c_0,b)$ on $G$\\
$G$ abelian, $|\operatorname{Sq}(A)|<\kappa$ & Lemma~\ref{lem:abeliansplit}(i), then Lemma~\ref{lem:booleandecoder} & untagged $B$-coordinates, proof of Proposition~\ref{prop:exceptmany} & $(c_0,b)$ on $K_0$\\
$G$ dicyclic, $|\operatorname{Sq}(A)|=\kappa$ & codegree bands, Lemma~\ref{lem:abelianbands} & Lemma~\ref{lem:complement} & $(c_0,b)$ on $G$, $b(\invp z)=0$\\
$G$ dicyclic, $|\operatorname{Sq}(A)|<\kappa$ & Lemma~\ref{lem:abeliansplit}(ii), then Lemma~\ref{lem:booleandecoder} & untagged $B$-coordinates, proof of Proposition~\ref{prop:exceptmany} & $(c_0,b)$ on $K_0$\\
\hline
\end{tabular}
\end{center}
In the last column $c_0$ is a coloring from Theorem~\ref{thm:finitecolor} and $b$ a binary function on the nonidentity inverse pairs; Proposition~\ref{prop:exceptmany} adds this binary coordinate in each case.

\subsection{A finite Boolean-factor decoder}
Let $B=C_2^{18}$, with basis $e_0,\ldots,e_{17}$. The finite decoder starts from the graph $\mathcal K$ on $\{0,\ldots,17\}$ obtained by subdividing $K_4$. Its branch vertices are $a=0$, $b=1$, $c=2$, $d=3$, its direct edges are $01$, $02$, $13$, and its remaining edges form the paths
\[
 0-4-3,\qquad 1-5-6-2,\qquad
 2-7-8-9-10-11-12-13-14-15-16-17-3.
\]
Thus the internally disjoint branch-to-branch paths $ab$, $ac$, $ad$, $bc$, $bd$, $cd$ have lengths $1$, $1$, $2$, $3$, $1$, $12$.
\begin{lemma}[The seed graph]\label{lem:seed}
The graph $\mathcal K$ has eighteen vertices, is triangle-free, has minimum degree two, and is rigid, that is, $\Aut(\mathcal K)=1$.
\end{lemma}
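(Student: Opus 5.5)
The first three assertions are read directly off the construction, so I would dispose of them before the rigidity claim. The vertex set $\{0,\ldots,17\}$ consists of the four branch vertices and the internal vertices $4$; $5,6$; and $7,\ldots,17$. That is $4+1+2+11=18$ vertices. For the degrees, each branch vertex has exactly three incident paths, and every internal subdivision vertex has degree two, so the minimum degree is two. For triangle-freeness, note that every edge either joins two branch vertices or lies on a subdivided path. A triangle would be a cycle of length three. Internal vertices have degree two, so any cycle through one must traverse its whole branch path. Every cycle therefore corresponds to a cycle of $K_4$, with length equal to the sum of its branch-path lengths. The only triangles of $K_4$ are $abc$, $abd$, $acd$, $bcd$, with lengths $1+1+3=5$, $1+2+1=4$, $1+2+12=15$ and $3+1+12=16$. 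All exceed three.

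For rigidity, let $F\in\Aut(\mathcal K)$. I would proceed in three steps.
\begin{enumerate}[label=(\arabic*),nosep]
\item $F$ permutes the degree-three vertices, which are exactly $\{0,1,2,3\}$.
\item Maximal paths whose interior vertices have degree two are exactly the six branch paths, so $F$ permutes them and preserves their lengths. The lengths $1,1,2,3,1,12$ come from the assignment $ab\mapsto1$, $ac\mapsto1$, $ad\mapsto2$, $bc\mapsto3$, $bd\mapsto1$, $cd\mapsto12$.
\item The unique length-$12$ path forces $F\{c,d\}=\{c,d\}$, and the unique length-$3$ path forces $F\{b,c\}=\{b,c\}$. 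Hence $F(c)=c$, then $F(d)=d$ and $F(b)=b$, and finally $F(a)=a$.
\end{enumerate}
With all branch vertices fixed, each branch path is mapped to the branch path with the same endpoints. There is exactly one such path for each pair, so it is mapped onto itself with its endpoints fixed. Since its endpoints are fixed, it is fixed pointwise. Therefore $F$ is the identity.

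The only point needing care is the claim in step (2) that $F$ permutes branch paths. This holds because adjacency and degree are preserved, so a walk through degree-two vertices between degree-three vertices is mapped to another such walk. The direct edges $01$, $02$, $13$ are the length-one paths and need no separate treatment.
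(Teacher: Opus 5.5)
Your proposal is correct and follows essentially the paper's route: degree-three vertices and branch-path lengths are preserved, the lengths pin down every branch vertex, and each branch path is then fixed pointwise. The only differences are cosmetic. You fix the branch vertices through the unique length-$12$ and length-$3$ paths, whereas the paper compares the incident length multisets $\{1,1,2\},\{1,1,3\},\{1,3,12\},\{1,2,12\}$. You check triangle-freeness by summing cycle lengths in $K_4$, whereas the paper observes that the unsubdivided edges $ab,ac,bd$ form no triangle. Both variants are valid.
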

\begin{proof}
There are $4+(0+0+1+2+0+11)=18$ vertices. The branch vertices have degree three and every other vertex has degree two. The only unsubdivided edges are $ab$, $ac$, $bd$, which contain no triangle, so $\mathcal K$ is triangle-free. An automorphism preserves the branch vertices and the lengths of the maximal paths between them. The incident path-length multisets of $a$, $b$, $c$, $d$ are respectively
\[
 \{1,1,2\},\quad \{1,1,3\},\quad \{1,3,12\},\quad \{1,2,12\}.
\]
These are distinct, so all branch vertices are fixed. Every path is then fixed pointwise. \end{proof}
Put
\[
 U_B=\{e_i:0\le i<18\}\cup\{e_i+e_j:ij\in E(\mathcal K)\}.
\]
\begin{lemma}[A Boolean GRR]\label{lem:booleanGRR}
The set $U_B=\{e_v:v\in V(\mathcal K)\}\cup\{e_u+e_v:uv\in E(\mathcal K)\}$ has $18+20=38$ elements. The Cayley graph $\Cay(B,U_B)$ is a GRR, every edge of it lies in a triangle, and its diameter is at most $18$.
\end{lemma}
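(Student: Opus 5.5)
The plan is to verify the four claims in turn: the count, the triangle property, the diameter bound, and then the GRR property, which carries most of the work.

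\emph{Counting.} $\mathcal K$ has $18$ vertices and $3+2+3+11=19$ edges from the listed paths plus the three direct edges, so it has $20$ edges in total. The elements $e_v$ have weight one and the $e_u+e_v$ have weight two, so these $38$ elements are distinct. They are nonzero and self-inverse, so $U_B$ is a connection set.

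\emph{Triangles and diameter.} Each edge of $\Cay(B,U_B)$ is $\{x,x+s\}$ with $s\in U_B$. If $s=e_u+e_v$ with $uv\in E(\mathcal K)$, then $x,\ x+e_u,\ x+e_u+e_v$ is a triangle. If $s=e_u$, choose a neighbor $v$ of $u$ in $\mathcal K$; this exists since the minimum degree is two. Then $x,\ x+e_u,\ x+e_u+e_v$ is again a triangle, because $x+(x+e_u+e_v)$ and $(x+e_u)+(x+e_u+e_v)$ lie in $U_B$. Every element of $B$ is a sum of at most $18$ basis vectors, each of which lies in $U_B$, so the diameter is at most $18$.

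\emph{GRR.} Let $F$ be an automorphism fixing $0$. I aim to show that $F$ is a group automorphism of $B$ that permutes $\{e_v\}$ according to an automorphism of $\mathcal K$.

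\begin{enumerate}
\item Distinguish the two kinds of neighbors of $0$ by counting common neighbors with $0$, that is, $\nu(s)=|U_B\cap(U_B+s)|$.
\begin{itemize}
\item For $s=e_u$, the representations $s=t+t'$ with $t,t'\in U_B$ are: $e_v+(e_u+e_v)$ for each neighbor $v$ of $u$, and $(e_u+e_v)+e_v$. That gives $2\deg u$ ordered pairs; there are no weight-two-plus-weight-two cases. So $\nu(e_u)=2\deg u\in\{4,6\}$.
\item For $s=e_u+e_v$ with $uv$ an edge, the representations are $e_u+e_v$ in both orders, giving $2$. Two weight-two elements $e_u+e_w$ and $e_w+e_v$ would need $w$ adjacent to both $u$ and $v$, which would form a triangle; triangle-freeness excludes this. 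So $\nu(e_u+e_v)=2$.
\end{itemize}
Hence $F$ preserves the set $\{e_v\}$ and the set of edge elements, and it induces a permutation $\pi$ of $V(\mathcal K)$.

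\item Show that $\pi$ is an automorphism of $\mathcal K$. In the induced graph on the neighborhood of $0$, the vertices $e_u$ and $e_v$ are adjacent exactly when $e_u+e_v\in U_B$, that is, when $uv\in E(\mathcal K)$. So $\pi\in\Aut(\mathcal K)=1$ by Lemma~\ref{lem:seed}. Each edge element $e_u+e_v$ is then determined as the common neighbor of $e_u$ and $e_v$ inside the neighborhood of $0$, which I will check directly, so $F$ fixes every neighbor of $0$.

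\item Propagate. Apply the previous steps to the rooted maps $F_x(y)=F(y+x)+F(x)$, as in Lemma~\ref{lem:layered}. Since $F_x$ fixes every element of $U_B$, we get $F(s+x)=s+F(x)$ for all $s\in U_B$ and all $x$. Because $U_B$ generates $B$, this gives $F(y)=y+F(0)=y$ for all $y$.
\end{enumerate}

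I expect the main obstacle to be the careful common-neighbor count, including checking that no accidental representations occur, and confirming that an edge element is pinned down once the basis elements are fixed. The rest is routine.
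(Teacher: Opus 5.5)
Your proof is correct and follows the paper's route. The counts $\nu(e_u)=2\deg u\ge4$ versus $\nu(e_u+e_v)=2$, which use triangle-freeness, are exactly the paper's degrees in the neighborhood of $0$; the induced graph on the basis is the rigid $\mathcal K$; and normalizing at every base point gives a translation. Pinning the edge-sums is harmless but unnecessary, since the basis already generates $B$.

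Only your edge-count arithmetic is garbled: the long path has $11$ internal vertices but $12$ edges. So the paths contribute $2+3+12=17$ edges, and the three direct edges bring the total to $20$.
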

\begin{proof}
The graph $\mathcal K$ has $3+2+3+12=20$ edges, so $|U_B|=18+20=38$. Inside the neighborhood of zero in $\Cay(B,U_B)$, a basis vertex $e_v$ has degree $2\deg_{\mathcal K}(v)\ge4$, whereas an edge-sum vertex has degree two. The latter assertion uses triangle-freeness: another adjacent edge-sum would complete a triangle in $\mathcal K$. The basis is therefore intrinsically recognizable, its induced graph is $\mathcal K$, and by Lemma~\ref{lem:seed} a rooted automorphism fixes the basis pointwise. For an automorphism $F$, normalization at every vertex gives $F(x+e_v)=F(x)+e_v$, forcing $F$ to be a translation. Every edge lies in a triangle: an edge $\{x,x+e_v\}$ with $x+e_u+e_v$ for a neighbor $u$ of $v$, and an edge $\{x,x+e_u+e_v\}$ with $x+e_u$. The basis gives diameter at most $18$.
\end{proof}

\begin{lemma}[Tag codes]\label{lem:tags}
Identify $B$ with $\mathbb F_2^9\times\mathbb F_2^9$ through the coordinates in the basis $e_0,\ldots,e_{17}$, and put
\[
 C=\{(x,x):x\in\mathbb F_2^9,\ \textstyle\sum_i x_i=0\}.
\]
Fix a basis vector $e\in B$ and order the coset $e+C$ by choosing the first eight coordinates of $x$ lexicographically, the ninth being their sum. Let $\Pi$ be the first $191$ vectors of $e+C$ and $\Pi_j$ its first $2j-1$ vectors, $1\le j\le96$. Then $\Pi$ consists of odd-weight vectors with pairwise Hamming distance at least four, the sets $\Pi_1\subseteq\cdots\subseteq\Pi_{96}$ are nested of odd sizes $2j-1$, and every $\Pi_j$ has trivial translation stabilizer in $B$. Moreover
\[
 \Pi+\Pi\subseteq C,\qquad C\cap U_B=\varnothing,
\]
so sums of two distinct elements of $\Pi$ have weight at least four and $(\Pi+\Pi)\cap U_B=\varnothing$.
\end{lemma}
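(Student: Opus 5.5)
The plan is to verify each assertion directly from the linear structure of $C$. No earlier result is needed except the parity argument already used in Lemma~\ref{lem:booleanpacket}.

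\emph{Structure of $C$ and its coset.} First I describe $C$ and the ordering. The map $x\mapsto(x,x)$ is injective and linear. The even-weight vectors of $\mathbb F_2^9$ form a subspace of dimension $8$, and each is determined by its first eight coordinates, the ninth being their sum. Hence $C$ is a subspace of $B$ with $|C|=256$. The stated lexicographic order on the first eight coordinates is therefore a well-defined total order on $e+C$. Since $191\le256$, the sets $\Pi$ and $\Pi_j$ for $1\le j\le96$ are well defined. They are nested initial segments of sizes $2j-1$, and $2\cdot96-1=191$.

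\emph{Weights and distances.} A vector $(x,x)\in C$ has weight $2\operatorname{wt}(x)$. Because $\operatorname{wt}(x)$ is even, this weight is divisible by $4$, and every nonzero element of $C$ has weight at least $4$. Adding the basis vector $e$ changes exactly one coordinate. Hence every element of $e+C$ has weight $\equiv\pm1\pmod4$, which is odd.

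For $p=e+c$ and $p'=e+c'$ in $\Pi$, the sum is $p+p'=c+c'\in C$, since $e+e=0$. This gives $\Pi+\Pi\subseteq C$. When $p\ne p'$ the sum is nonzero, so it has weight at least $4$. In a Boolean group $p+p'$ is also the difference, so its weight is the Hamming distance. This gives pairwise distance at least four and the statement about sums of two distinct elements.

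Every element of $U_B$ has weight $1$ or $2$, and $0\notin U_B$. The nonzero elements of $C$ have weight at least $4$, so $C\cap U_B=\varnothing$. Consequently $(\Pi+\Pi)\cap U_B=\varnothing$. This also covers the sums $p+p=0$, since $0\notin U_B$.

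\emph{Trivial stabilizer.} Let $t\in B$ be nonzero. Translation by $t$ is a fixed-point-free involution of $B$, so it partitions any invariant set into two-element orbits $\{y,y+t\}$. An invariant set must therefore have even size. Each $\Pi_j$ has odd size $2j-1$, so the only translation preserving it is $t=0$.

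No step is a genuine obstacle. The one point requiring care is the weight bound: the even-weight condition on $x$ is what makes nonzero elements of $C$ have weight at least $4$ rather than $2$, and weight at least $4$ is exactly what is needed to miss $U_B$.
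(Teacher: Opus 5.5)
Your proof is correct and follows essentially the same route as the paper's. Nonzero elements of $C$ have weight $2\operatorname{wt}(x)\ge4$, so $e+C$ consists of odd-weight vectors, and $\Pi+\Pi\subseteq C$ gives both the distance bound and disjointness from the weight-one-or-two set $U_B$. The trivial translation stabilizer of each $\Pi_j$ comes from the same odd-cardinality pairing argument.
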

\begin{proof}
The subspace $C$ has dimension eight. A nonzero member $(x,x)$ has weight $2\operatorname{wt}(x)\ge4$, since $\operatorname{wt}(x)$ is even and nonzero. The coset $e+C$ therefore consists of $256\ge191$ odd-weight vectors at pairwise distance at least four, and $\Pi+\Pi\subseteq C$. The elements of $U_B$ are, by its definition, the basis vectors and the sums of the two basis vectors of an edge of $\mathcal K$, so $U_B$ contains only nonzero vectors of weight one or two, while $C$ contains only zero and vectors of weight at least four; hence $C\cap U_B=\varnothing$. Each $\Pi_j$ has trivial translation stabilizer because its cardinality is odd: a nonzero translation of $B$ has no fixed point and is an involution, so it pairs off the points of any set it preserves.
\end{proof}

For a group $K_0$, choose the inverse-invariant coloring $c$ supplied by
Theorem~\ref{thm:finitecolor}. We encode each color by the size of a Boolean tag set. The coloring $c$ has at most $17$ colors. The decoder has capacity $96$, with nested tags
$\Pi_1\subseteq\cdots\subseteq\Pi_{96}$ of odd sizes $2j-1\le191$, and the binary refinement in the proof of
Proposition~\ref{prop:exceptmany} uses at most $34\le96$ colors. Regard the palette of $c$ as a subset of
$\{1,\ldots,96\}$, so that every $\Pi_{c(h)}$ is defined. On the
\emph{existing} product $K_0\times B$, set
\[
 S=(\{1\}\times U_B)\cup\{(h,t):h\ne1,\ t\in \Pi_{c(h)}\}.
\]
This set is inverse-closed: $(h,t)^{-1}=(h^{-1},t)$ because $B$ is Boolean,
and $c(h^{-1})=c(h)$.
\begin{lemma}[The Boolean-factor decoder]\label{lem:booleandecoder}
Let $K_0$ be a group and $B=C_2^{18}$. Let $U_B\subseteq B$ be the connection set of Lemma~\ref{lem:booleanGRR}, built from the seed graph $\mathcal K$ of Lemma~\ref{lem:seed}, and let $\Pi_1\subseteq\cdots\subseteq\Pi_{96}$ be the tag codes of Lemma~\ref{lem:tags}, with $|\Pi_j|=2j-1$. Let $c:K_0\setminus\{1\}\to\{1,\ldots,96\}$ be an inverse-invariant coloring with $\Aut_c(K_0)=\Xi(K_0)$, and put
\[
 S=(\{1\}\times U_B)\cup\{(h,t):h\ne1,\ t\in\Pi_{c(h)}\}\subseteq K_0\times B.
\]
Then
\[
 \Aut(\Cay(K_0\times B,S))=\Xi(K_0)\times R(B)=\Xi(K_0\times B),
\]
and $\Cay(K_0\times B,S)$ has diameter at most $19$. Moreover:
\begin{enumerate}[label=(\roman*),nosep]
\item the triangle-edge components of $\Cay(K_0\times B,S)$ are exactly the $B$-fibers $\{k\}\times B$, $k\in K_0$;
\item for $h\ne k$, every vertex of $\{h\}\times B$ has exactly $2c(hk^{-1})-1$ neighbors in $\{k\}\times B$, so these cross-fiber degrees recover $c$; hence, if $S'$ is built in the same way from a second such coloring $c'$ of $K_0$, every isomorphism $\Cay(K_0\times B,S)\to\Cay(K_0\times B,S')$ maps $B$-fibers onto $B$-fibers, and the bijection $\phi$ of $K_0$ that it induces satisfies $c'(\phi(h)\phi(k)^{-1})=c(hk^{-1})$ for all $h\ne k$;
\item for each $k\ne1$, $S\cap(\{k\}\times B)=\{k\}\times\Pi_{c(k)}$, a tag of size $2c(k)-1\le191$, so $S$ meets each fiber $\{k\}\times B$, $k\ne1$, inside a tag of size at most $191$.
\end{enumerate}
\end{lemma}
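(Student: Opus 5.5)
The plan is to recognize the $B$-fibers intrinsically through triangles, read the coloring from cross-fiber degrees, and then combine the rigidity of $\Cay(B,U_B)$ with $\Aut_c(K_0)=\Xi(K_0)$. The steps, in order, are (i), then (ii), (iii), and finally the automorphism group and the diameter bound.

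\emph{Step (i).} Inside a fiber the induced graph is $\Cay(B,U_B)$, and every edge of it lies in a triangle by Lemma~\ref{lem:booleanGRR}. We must show that no cross-fiber edge lies in a triangle. Suppose $(x,s),(y,t),(z,u)$ form a triangle with $x\ne y$. If $z=x$, then the cross differences $(xy^{-1},s-t)$ and $(xy^{-1},u-t)$ both lie in the tag $\Pi_{c(xy^{-1})}$, while $(1,s-u)\in\{1\}\times U_B$. This gives $s-u\in(\Pi+\Pi)\cap U_B=\varnothing$ by Lemma~\ref{lem:tags}, a contradiction. The case $z=y$ is symmetric. If $x,y,z$ are distinct, each $B$-coordinate difference lies in $\Pi\subseteq e+C$, so each has the form $e+c_i$ with $c_i\in C$. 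The sum of the three differences around the triangle is $0$, hence $3e+\sum c_i=e+\sum c_i=0$. This is impossible, because $e\notin C$: $C$ has even-weight elements only, while $e$ has weight one. Hence the triangle-edge components are contained in fibers, and they equal the fibers because the graph induced on a fiber is connected. Connectedness holds since $U_B$ contains a basis of $B$.

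\emph{Steps (ii) and (iii).} Statement (iii) is immediate from the definition of $S$. For (ii), fix a vertex $(h,s)$. Its neighbors in $\{k\}\times B$ are the points $(k,t)$ with $s-t\in\Pi_{c(hk^{-1})}$, so there are exactly $|\Pi_{c(hk^{-1})}|=2c(hk^{-1})-1$ of them. An isomorphism preserves triangles, so by (i) it maps fibers to fibers and induces a bijection $\phi$ of $K_0$. It also preserves these cross-fiber neighbor counts, which gives $c'(\phi(h)\phi(k)^{-1})=c(hk^{-1})$.

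\emph{Automorphism group.} Let $F$ be an automorphism. By (ii) with $c'=c$, the induced map $\phi$ lies in $\Aut_c(K_0)=\Xi(K_0)$. After composing with an element of $\Xi(K_0)\times R(B)$, we may assume that $\phi$ is the identity and that $F(1,0)=(1,0)$. Then $F$ restricts to an automorphism of each fiber. On $\{1\}\times B$ it is a rooted automorphism of the GRR of Lemma~\ref{lem:booleanGRR}, hence the identity. For another fiber $\{k\}\times B$, $F$ acts there as a translation by some $\tau_k\in B$, since each fiber is a copy of the same GRR. The cross edges between $\{1\}\times B$ and $\{k\}\times B$ must be preserved, so $\tau_k$ stabilizes the tag $\Pi_{c(k)}$. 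Lemma~\ref{lem:tags} says this stabilizer is trivial, so $\tau_k=0$ and $F$ is the identity.

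Consequently $\Aut=\Xi(K_0)\times R(B)$. The inclusion $\supseteq$ holds because $S$ depends on $h$ only through the inverse-invariant colors. The remaining identity $\Xi(K_0)\times R(B)=\Xi(K_0\times B)$ follows from the inclusion $\Xi\le\Aut$ in \eqref{eq:sandwich}. For the diameter, every $c(h)\ge1$ gives a nonempty tag, so any two fibers are joined by an edge. A path therefore needs one cross step plus at most $18$ steps inside a fiber, giving diameter at most $19$.

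The main obstacle is the no-cross-triangle argument in (i), specifically the three-distinct-fibers case. There the parity of the coset $e+C$ has to do the work, since no property of $K_0$ is available.
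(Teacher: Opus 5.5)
Your proof is correct and follows the paper's argument essentially step for step. Triangles recover the fibers: the two-fiber case uses $(\Pi+\Pi)\cap U_B=\varnothing$, and the three-fiber case uses the parity argument that your coset computation $e\in C$ expresses. Cross-fiber degrees then recover $c$, the fiber GRR makes $F$ a translation $\tau_k$ on each fiber, and the trivial stabilizer of the odd tag forces $\tau_k=0$. Normalizing by $F(1,0)=(1,0)$ and comparing each fiber only with $\{1\}\times B$ is an equivalent packaging of the paper's ``all $a_h$ equal''.

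One small omission concerns the final identity. The inclusion $\Xi\le\Aut$ from \eqref{eq:sandwich} gives only $\Xi(K_0\times B)\le\Aut(\Cay(K_0\times B,S))=\Xi(K_0)\times R(B)$. For the reverse inclusion you must check directly that $(h,s)\mapsto(\psi(h),s+b)$, with $\psi\in\Xi(K_0)$, sends the difference $(hk^{-1},s+t)$ to $(\psi(h)\psi(k)^{-1},s+t)\in\invp{(hk^{-1},s+t)}$. This holds because $B$ is Boolean, and it is the paper's ``conversely the displayed product preserves all inverse pairs.''
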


The proof reads the $B$-fibers off the graph as its triangle-edge components and the coloring $c$ off the
cross-fiber degrees (Figure~\ref{fig:decoder}). An automorphism therefore acts on the fibers by an element of $\Aut_c(K_0)$, and inside
the fibers by one common translation of $B$.
\begin{proof}
The first task is to recover the fibers intrinsically. No triangle crosses two fibers: a two-fiber triangle would give a difference in $(\Pi+\Pi)\cap U_B$, which is empty by Lemma~\ref{lem:tags}, while a three-fiber triangle would have three odd Boolean differences summing to zero. Each fiber $\{k\}\times B$ spans a copy of $\Cay(B,U_B)$, in which every edge lies in a triangle and which is connected, its diameter being at most $18$ (Lemma~\ref{lem:booleanGRR}). The triangle-edge components are therefore exactly the $B$-fibers, which is (i). Their GRRs (Lemma~\ref{lem:booleanGRR}) force an automorphism to have the form $(h,b)\mapsto(\phi(h),b+a_h)$. Cross-fiber degrees $2c(hk^{-1})-1$ then recover $c$, which is (ii); since (i) and these degrees are read off the graph alone, an isomorphism between two such graphs over $K_0$ and $B$ preserves the fibers and the colors in the same way. Thus $\phi$ preserves $c$ on differences, so $\phi\in\Aut_c(K_0)=\Xi(K_0)$ by the hypothesis on $c$. Finally, cross-neighborhood preservation gives $\Pi_j+a_h+a_k=\Pi_j$, forcing all $a_h$ equal. We obtain
\[
 \Aut(\Cay(K_0\times B,S))=\Xi(K_0)\times R(B)=\Xi(K_0\times B).
\]
For the last equality, also recorded in \cite[Lemma~4]{LdS}, a rooted inverse-pair permutation fixes every $(1,b)$ step and then restricts on the other coordinate to $\Xi(K_0)$; conversely the displayed product preserves all inverse pairs. The displayed equality identifies the full automorphism group. The diameter is at most 19: take one selected exterior step and at most 18 basis steps. Item (iii) is the definition of $S$, with $|\Pi_j|=2j-1\le191$ by Lemma~\ref{lem:tags}.
\end{proof}
\begin{figure}[!htbp]
\centering
\begin{tikzpicture}[x=1cm,y=1cm,>=Latex,every node/.style={font=\small}]
  \path[use as bounding box] (-.3,-1.05) rectangle (13.9,4.0);
  \node at (2.0,3.72) {$\{h\}\times B$};
  \node at (11.3,3.72) {$\{k\}\times B$};
  \draw[rounded corners=3pt,fill=black!4] (0,0) rectangle (4,3.30);
  \draw[rounded corners=3pt,fill=black!4] (9.05,0) rectangle (13.55,3.30);
  \node[align=center] at (2,2.68) {internal graph\\$\Cay(B,U_B)$};
  \node[align=center] at (11.3,2.84) {the same internal GRR};
  \fill (3.20,1.34) circle (2pt);
  \node[anchor=east] at (2.95,1.34) {$(h,b)$};
  \foreach \yy/\tag in {2.14/1,1.54/2}{
    \fill (9.65,\yy) circle (2pt);
    \node[anchor=west] at (9.85,\yy) {$(k,b+t_\tag)$};
    \draw[-] (3.20,1.34)--(9.65,\yy);
  }
  \node at (10.05,1.01) {$\vdots$};
  \fill (9.65,.48) circle (2pt);
  \node[anchor=west] at (9.85,.48) {$(k,b+t_{2j-1})$};
  \draw[-] (3.20,1.34)--(9.65,.48);
  \node[align=center,fill=white,inner sep=3pt] at (6.52,2.91)
    {$j=c(hk^{-1})$\\$\Pi_j=\{t_1,\ldots,t_{2j-1}\}$};
  \node[font=\footnotesize] at (2,2.03) {$|B|=2^{18}$};
  \node[align=center,font=\footnotesize] at (2,.42)
    {Every internal edge\\lies in a triangle.};
  \node[align=center,font=\footnotesize] at (6.8,-.63)
    {$\mathcal N((h,b))\cap(\{k\}\times B)=\{(k,b+t):t\in \Pi_j\}$;\quad
     cross-fiber degree $=2j-1$.};
\end{tikzpicture}
\caption{The Boolean-factor decoder of Section~\ref{sec:exceptions}, \emph{before complementation}. For distinct $h,k\in K_0$, a vertex $(h,b)$ has exactly the displayed set of neighbors in the $k$-fiber; the drawing abbreviates the list for $j\ge2$, while $j=1$ gives one neighbor. The fibers and their internal GRRs are schematic. No triangle crosses fibers, so triangle-edge components recover them; the cross-fiber degree recovers $j$, and the trivial translation stabilizer of the odd tag set $\Pi_j$ forces the fiber translations to agree.}
\label{fig:decoder}
\end{figure}

\subsection{Uncountable abelian and generalized dicyclic groups}
Let $G$ be a group of cardinality $\kappa>\aleph_0$ that is abelian or generalized dicyclic, written multiplicatively. In the abelian case put $A=G$ and $z=1$. In the dicyclic case write $G=\cl{A,q}$, with $A$ abelian of index two, $q^2=z\ne1$, $z^2=1$ and $qaq^{-1}=a^{-1}$ for $a\in A$; then $|A|=\kappa$. For $C\le A$ put $\operatorname{Sq}(C)=\{x^2:x\in C\}$, which is $2C$ in additive notation.
If $|\operatorname{Sq}(A)|=\kappa$, fix an inverse-invariant coloring $c$ of $G\setminus\{1\}$ with finitely many colors, each at least $1$, and $\Aut_c(G)=\Xi(G)$, for instance one from Theorem~\ref{thm:finitecolor}.
The idea is to encode that color in a narrow codegree band at the stage when each inverse pair first enters. Enumerate the inverse pairs outside $\cl z$ as $(\invp{g_\xi})_{\xi<\kappa}$ in order
type the initial ordinal $\kappa$, with each pair occurring once and one
representative chosen for each. Start the running subgroup at $\cl z$;
adjoin each target and then its finitely many witnesses as described below,
and take unions at limit stages. This gives a continuous chain of subgroups. For a target pair $\invp g$ of color $j$, first adjoin its representative $g$, choose $t\in A$ with $t^2$ outside the current subgroup $H$, and insert
\[
 Y(t,g)=\{t,t^{-1},tg,(tg)^{-1}\}.
\]
In the abelian case, written additively, this is $\{t,-t,t+g,-t-g\}$; for $g\notin A$ it is $\{t,t^{-1},tg,ztg\}$. All four points are new. Its only nonidentity differences lying in the current subgroup are $g$ and $g^{-1}$, represented twice each if $g\ne g^{-1}$ and four times if $g=g^{-1}$, together with $z$, represented twice, when $g\notin A$. To make the eventual count depend only on the color, use $6j$ blocks for this inverse pair if $g\ne g^{-1}$ and $3j$ blocks if $g=g^{-1}$, adjoining $t$ after each block. The multiplicity is applied once per pair, not once for each orientation. Before requirement $\xi<\kappa$ and at each of its finite insertion substages,
the subgroup has cardinality at most $\max\{|\xi|,\aleph_0\}<\kappa$:
each earlier requirement added only finitely many generators. As $|\operatorname{Sq}(A)|=\kappa$, a witness $t\in A$ with $t^2\notin H$ therefore exists even when $\kappa$ is singular; the successive witness
adjoinings are the entry-rank stages used in the count below.

\begin{lemma}[Abelian and dicyclic codegree bands]\label{lem:abelianbands}
Let $G$, $A$ and $z$ be as in the construction above, with $|G|=\kappa>\aleph_0$ and $|\operatorname{Sq}(A)|=\kappa$: either $G$ is abelian, $A=G$ and $z=1$, or $G=\cl{A,q}$ is generalized dicyclic and $z=q^2$. Let $c$ be an inverse-invariant coloring of $G\setminus\{1\}$ with finitely many colors, each at least $1$, such that $\Aut_c(G)=\Xi(G)$; a coloring from Theorem~\ref{thm:finitecolor} is one. Let $T$ be the union of the blocks $Y(t,g)$ inserted for $c$ by the construction above, and write the construction as a continuous chain $(H_\sigma)_{\sigma\le\lambda}$ of subgroups with $H_0=\cl z$ and $H_\lambda=G$, in which each $H_{\sigma+1}$ is generated by $H_\sigma$ and one element, a target or a witness; each successor $\sigma+1$ is a \emph{rank}. In this notation:
\begin{enumerate}[label=(\alph*),nosep]
\item every block $Y(t,g)$ has a rank $\sigma+1$ with $H_{\sigma+1}=\cl{H_\sigma,t}$, $g\in H_\sigma$, $t\in A$ and $t^2\notin H_\sigma$, the last possible because $|H_\sigma|<\kappa=|\operatorname{Sq}(A)|$; distinct blocks have distinct ranks, so each rank carries at most four points of $T$;
\item a target pair $\invp g$ receives $6c(g)$ blocks if $g\ne g^{-1}$ and $3c(g)$ blocks if $g=g^{-1}$;
\item the \emph{entry rank} $\rho(d)$ of $d\notin\cl z$, the least $\sigma$ with $d\in H_\sigma$, is a rank, because the chain is continuous and $d\notin H_0$.
\end{enumerate}
Then $T=T^{-1}$, $T$ contains no involution, the blocks targeted at $\invp d$ supply exactly $12c(d)$ representations $d=x^{-1}y$ with $x,y\in T$, and
\[
 12c(d)\le\nu_T(d)\le12c(d)+8\qquad(d\notin\cl z);
\]
in the dicyclic case, moreover, $\nu_T(z)=\kappa$. Two vertices $u\ne v$ of $\Cay(G,T)$ satisfy $|N(u)\cap N(v)|=\nu_T(vu^{-1})$. Hence these codegrees single out $z$ and recover $c$ on $G\setminus\cl z$, the automorphism group of $\Cay(G,T)$ is $\Xi(G)$, and it has diameter at most two.
\end{lemma}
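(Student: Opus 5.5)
We verify the block structure first, then count representations of each difference by entry rank, and finally read the automorphism group off the codegrees as in Lemma~\ref{lem:recognition}.

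\emph{Block structure.} Take a block $Y(t,g)$ inserted at rank $\sigma+1$, so $H_{\sigma+1}=\cl{H_\sigma,t}$, $g\in H_\sigma$, $t\in A$ and $t^2\notin H_\sigma$. The four points $t^{\pm1},(tg)^{\pm1}$ lie outside $H_\sigma$. Indeed $t\in H_\sigma$ would give $t^2\in H_\sigma$, and likewise for $tg$ since $g\in H_\sigma$. Symmetry $T=T^{-1}$ is built in. No point of $T$ is an involution. For $t$ this follows from $t^2\notin H_\sigma$. For $tg$ with $g\in A$ we have $(tg)^2=t^2g^2\notin H_\sigma$. For $g\notin A$ we have $(tg)^2=g^2$, which has order four (it equals $z$), so $tg$ has order four and is not an involution.

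\emph{Internal differences.} Next I compute the twelve ordered differences $x^{-1}y$ with $x\ne y$ inside one block. With $g\in A$ and additive notation, they are $\pm g$ (four times in total), $\pm2t$, $\pm(2t+g)$ and $\pm(2t+2g)$ type terms. Only $\pm g$ lie in $H_\sigma$. When $g\notin A$ the differences $z$ also appear, twice. Every other difference involves $t^{\pm2}$ times an element of $H_\sigma$, so it lies outside $H_\sigma$ but inside $H_{\sigma+1}$; hence its entry rank is exactly $\sigma+1$. The block multiplicities $6c$ and $3c$ are chosen so that the targeted representations of $\invp d$ number exactly $12c(d)$. I will check the two cases separately: $d\ne d^{-1}$ gives two representations of $d$ per block, and $d=d^{-1}$ gives four.

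\emph{The upper bound, which is the main obstacle.} Any remaining representation of $d\notin\cl z$ is of one of three kinds: new--old between blocks of different ranks, new--new between different blocks, or internal but untargeted. For $x\in Y_1$ of rank $\sigma_1+1$ and $y\in Y_2$ of rank $\sigma_2+1$ with $\sigma_1<\sigma_2$, the point $y$ is not in $H_{\sigma_2}$ while $x$ is, so $x^{-1}y$ has entry rank $\sigma_2+1$. Thus every representation of $d$ uses only points from ranks at most $\rho(d)$, and at least one point from rank exactly $\rho(d)$. By (a), that rank carries a single block of at most four points $y$. For each such $y$, the partner is determined as $x=yd^{-1}$ or $x=dy$, depending on which side $y$ occupies. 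This gives at most eight representations involving that block. The targeted representations come from blocks of lower rank, since $d\in H_\sigma$ there. So the extra representations number at most $8$, which yields $12c(d)\le\nu_T(d)\le12c(d)+8$. The point needing care is that the block at rank $\rho(d)$ may itself be targeted at $d$. It cannot be: a target $g$ lies in $H_\sigma$, while $d$ enters only at $\sigma+1$. This is what keeps the count clean.

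\emph{Codegree of $z$ and conclusion.} In the dicyclic case $\nu_T(z)=\kappa$, because $\kappa$ pairs $\invp g$ with $g\notin A$ each contribute their internal $z$-representations. The common-neighbor formula $|N(u)\cap N(v)|=\nu_T(vu^{-1})$ is the one recorded in Section~\ref{sec:notation}. Since $12(j+1)>12j+8$, the bands $[12j,12j+8]$ are disjoint for distinct colors. So the codegree recovers $c$ on $G\setminus\cl z$, and the value $\kappa$ singles out $z$. Hence every automorphism $F$ satisfies $c(F(x)F(y)^{-1})=c(xy^{-1})$, with $z$-differences preserved. This gives $F\in\Aut_c(G)=\Xi(G)$, and the reverse inclusion is \eqref{eq:sandwich}. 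Diameter at most two follows because every nonidentity difference has positive codegree: $c\ge1$ for differences outside $\cl z$, and $\nu_T(z)=\kappa$.
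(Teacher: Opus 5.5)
Your proposal is correct and follows essentially the paper's argument: you sort the representations of $d\notin\cl z$ by the larger rank of their two points, bound those whose larger rank is the entry rank $\rho(d)$ by eight using the single block at that rank, count exactly $12c(d)$ from the blocks targeted at $\invp d$, and then read $z$ and $c$ off the disjoint bands to land in $\Aut_c(G)=\Xi(G)$. A few slips should be fixed when you write it up:
\begin{itemize}
\item The targeted blocks lie at ranks \emph{above} $\rho(d)$, since $d\in H_\sigma$ there.
\item $z=g^2$ has order two, which is why $tg$ has order four.
\item The partner of a point $w$ in the first slot is $wd$, not $dw$.
\item The sentence ``every representation of $d$ uses only points from ranks at most $\rho(d)$'' should be restricted to non-targeted representations.
\item State explicitly that the four points of a block are distinct, because none of the twelve internal differences is $1$.
\end{itemize}
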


The proof sorts the pairs $(x,y)\in T\times T$ with $x^{-1}y=d$ by the larger rank of $x$ and $y$: at most eight pairs have that rank equal to $\rho(d)$, and every other pair lies in a block targeted at $\invp d$.
\begin{proof}
For a target pair $\invp g$ of color $j$, the construction intends the following representations $d=x^{-1}y$, $x,y\in T$, of each $d\in\invp g$:
\begin{center}
\begin{tabular}{lccc}
\hline
target type & representations per designated block & blocks for color $j$ & intended total\\
\hline
$g\in A$, $g\ne g^{-1}$ & $2$ & $6j$ & $12j$\\
$g\in A$, $g=g^{-1}$ & $4$ & $3j$ & $12j$\\
exterior dicyclic, $g\notin A$ & $2$ & $6j$ & $12j$\\
\hline
\end{tabular}
\end{center}
Every other representation of $d\notin\cl z$ has the entry rank $\rho(d)$ as the larger rank of $x$ and $y$, and there are at most $8$ of these; in the dicyclic case $z$ has $\kappa$ representations.

Every element of $G\setminus A$ squares to $z$ and inverts $A$ by conjugation, and $z$ is central; hence $(tg)^{-1}=ztg$ and $gt^{-1}=tg$ for $t\in A$ and $g\notin A$. Every block is inverse-closed, so $T=T^{-1}$. Fix a block $Y(t,g)$ of rank $\sigma+1$. Its points lie in $H_{\sigma+1}\setminus H_\sigma$, since $g\in H_\sigma$ and $t\notin H_\sigma$; so $1\notin T$, and each point of $T$ has the rank of its block. For the twelve ordered pairs $(x,y)$ of different entries of the list $t,t^{-1},tg,(tg)^{-1}$, the differences $x^{-1}y$ lying in $H_\sigma$ are, with multiplicity:
\begin{center}
\begin{tabular}{ll}
\hline
target & old differences $x^{-1}y\in H_\sigma$\\
\hline
$g\in A$, $g\ne g^{-1}$ & $g$ twice, $g^{-1}$ twice\\
$g\in A$, $g=g^{-1}$ & $g$ four times\\
$g\notin A$ & $g$ twice, $g^{-1}$ twice, $z$ twice\\
\hline
\end{tabular}
\end{center}
The other differences are $t^2$, $t^{-2}$, $t^2g^2$ and $t^{-2}g^{-2}$ once each and $t^2g$ and $t^{-2}g^{-1}$ twice each if $g\in A$, and $t^2$ and $t^{-2}$ once each and $t^2g$ and $zt^2g$ twice each if $g\notin A$; they lie in $t^2H_\sigma\cup t^{-2}H_\sigma$, which misses $H_\sigma$ because $t^2\notin H_\sigma$, so their entry rank is $\sigma+1$. None of the twelve differences is $1$, as $g\ne1$ and $z\ne1$ when $g\notin A$, so the four points of a block are distinct.

The later blocks avoid the earlier differences: if $x\in T$ has rank $\sigma+1$ and $y\in T$ a smaller rank, then $y\in H_\sigma$ and $x\notin H_\sigma$, so $x^{-1}y$ and $y^{-1}x$ lie in $H_{\sigma+1}\setminus H_\sigma$ and have entry rank $\sigma+1$.

Fix $d\notin\cl z$, and in a pair $(x,y)\in T\times T$ with $x^{-1}y=d$ let $\tau=\sigma+1$ be the larger of the ranks of $x$ and $y$. If the ranks differ, then $\tau=\rho(d)$ by the avoidance. If they agree, then $x,y$ lie in one block of rank $\tau$, by (a); either $d\notin H_\sigma$ and $\tau=\rho(d)$, or $d$ is an old difference of that block, hence by the table of old differences its target or the inverse of its target, since $d\ne z$, and the block is targeted at $\invp d$. At most eight pairs have $\tau=\rho(d)$: one of $x,y$ is among the at most four points of rank $\rho(d)$, and it determines the other. By the table of old differences, (b) and $c(d^{-1})=c(d)$, the blocks targeted at $\invp d$ supply exactly the $12c(d)$ pairs of the representation-count table. Hence
\[
 12c(d)\le\nu_T(d)\le12c(d)+8\qquad(d\notin\cl z).
\]
The bands $[12j,12j+8]$ are disjoint, so $c(d)=\lfloor\nu_T(d)/12\rfloor$ for $d\notin\cl z$, and these values of $\nu_T$ are finite. In the dicyclic case $G\setminus A$ has $\kappa$ elements and so contains $\kappa$ target pairs, each receiving at least six blocks with the two points $tg$ and $ztg$ outside $A$; distinct blocks are disjoint, so $|T\setminus A|=\kappa$. Each $x\in T\setminus A$ has $x^{-1}\in T$ and $x^{-1}x^{-1}=x^{-2}=z$, so the pairs $(x,x^{-1})$ give $\kappa\le\nu_T(z)\le|G|=\kappa$. For $u\ne v$, the common-neighbor formula stated with \eqref{eq:sandwich}, for the connection set $T$, gives
\[
 |N(u)\cap N(v)|=\nu_T(vu^{-1})\qquad(u\ne v).
\]
Let $\varphi$ be a graph automorphism and $u\ne v$. Then $\nu_T(\varphi(v)\varphi(u)^{-1})=\nu_T(vu^{-1})$, so $vu^{-1}=z$ exactly when $\varphi(v)\varphi(u)^{-1}=z$, and otherwise both lie outside $\cl z$ and have the same color by the bands. Thus $c(\varphi(v)\varphi(u)^{-1})=c(vu^{-1})$, and $\varphi\in\Aut_c(G)=\Xi(G)$. Conversely, \eqref{eq:sandwich} for the connection set $T$ gives $\Xi(G)\le\Aut(\Cay(G,T))$, so every element of $\Xi(G)$ preserves adjacency. Since $c\ge1$, every $d\ne1$ has $\nu_T(d)>0$, so any two distinct vertices have a common neighbor and the diameter is at most two. Finally, $T$ contains no involution: a point of $T$ in $A$ has square $t^{\pm2}$ or $t^{\pm2}g^{\pm2}$ for its block $Y(t,g)$, which lies outside $H_\sigma$, while a point outside $A$ squares to $z\ne1$ and has order four.
\end{proof}
\begin{lemma}[Abelian splitting]\label{lem:abeliansplit}
\begin{enumerate}[label=(\roman*),nosep]
\item Let $A$ be abelian, written additively, with $|A|=\kappa>\aleph_0$ and $|2A|<\kappa$. Then $A=C\oplus B_\infty$, where $C$ is a subgroup of size smaller than $\kappa$ with $2C=2A$ that can be chosen to contain any given element $z\in A$, and $B_\infty\le A[2]$ is Boolean with $\dim B_\infty=\kappa$. In particular $A$ has a $C_2^{18}$ direct factor, and Lemma~\ref{lem:booleandecoder} applies with $B$ this factor, $K_0$ a complementary factor, and $c$ the coloring of $K_0$ given by Theorem~\ref{thm:finitecolor}.
\item Let $G=\cl{A,q}$ be generalized dicyclic, with $A$ and $z=q^2$ as in the construction above, $|G|=\kappa>\aleph_0$ and $|\operatorname{Sq}(A)|<\kappa$. Then $G=\cl{C,q}\times B_\infty$, where $C\le A$ contains $z$ and has size smaller than $\kappa$, and $B_\infty\le A$ is a central Boolean subgroup of dimension $\kappa$. In particular $G=K_0\times B$ with $B\le B_\infty$, $B\cong C_2^{18}$ and $K_0=\cl{C,q}\times B_1$ for a complement $B_1$ of $B$ in $B_\infty$, and Lemma~\ref{lem:booleandecoder} applies with these $K_0$ and $B$ and the coloring of $K_0$ given by Theorem~\ref{thm:finitecolor}.
\end{enumerate}
\end{lemma}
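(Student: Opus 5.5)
For (i), the plan is to work in the quotient by the $2$-torsion and use that $2A$ is small. The homomorphism $x\mapsto 2x$ maps $A$ onto $2A$ with kernel $A[2]$, so $|A/A[2]|=|2A|<\kappa$ and $|A[2]|=\kappa$. Choose a set $X\subseteq A$ of representatives of the cosets of $A[2]$; it has $|X|<\kappa$. Add $z$ to $X$ and let $C_1=\cl{X}$. This subgroup has size $<\kappa$ (at most $\max\{|X|,\aleph_0\}$), satisfies $C_1+A[2]=A$, and contains $z$. Moreover $2C_1=2A$, since every $2a$ equals $2x$ for the representative $x$ of $a$.

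The main step is then to split off a Boolean complement. The subgroup $C_1\cap A[2]$ is an $\F_2$-subspace of the vector space $A[2]$. Pick a linear complement $B_\infty$ of it in $A[2]$. Then $C_1\cap B_\infty=0$, and $C_1+B_\infty\supseteq C_1+(C_1\cap A[2])+B_\infty=C_1+A[2]=A$. Hence $A=C_1\oplus B_\infty$, and we take $C=C_1$. The dimension of $B_\infty$ is $\kappa$: we have $|A[2]|=\kappa$ while $|C_1\cap A[2]|<\kappa$, so the complement has cardinality $\kappa$, and for uncountable cardinality this equals its dimension. Choosing any $18$ basis vectors of $B_\infty$ gives a factor $B\cong C_2^{18}$, and $K_0=C\oplus B'$ for a complement $B'$ of $B$ in $B_\infty$. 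Theorem~\ref{thm:finitecolor} supplies a coloring $c$ of $K_0$ with $\Aut_c(K_0)=\Xi(K_0)$, using at most $17\le96$ colors. Lemma~\ref{lem:booleandecoder} then applies directly.

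For (ii), apply (i) to the abelian group $A$ with the given $z$. Here $|A|=\kappa$ because $[G:A]=2$, and $|2A|=|\operatorname{Sq}(A)|<\kappa$. This gives $A=C\oplus B_\infty$ with $z\in C$ and $|C|<\kappa$. The key observation is that $B_\infty\le A[2]$ is central in $G$: $q$ inverts $A$, and inversion is trivial on elements of order at most two, while $A$ is abelian. So $\cl{C,q}$ and $B_\infty$ commute elementwise. Since $q^2=z\in C$ and $qCq^{-1}=C$, the subgroup $\cl{C,q}$ equals $C\cup Cq$, which has size $<\kappa$. Its intersection with $B_\infty$ is trivial: $Cq$ lies outside $A$, and $C\cap B_\infty=1$. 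The product $\cl{C,q}B_\infty$ contains $A=CB_\infty$ and $q$, so it is all of $G$. Thus $G=\cl{C,q}\times B_\infty$ is an internal direct product.

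The factor $B\cong C_2^{18}$ is chosen as in (i), with $K_0=\cl{C,q}\times B_1$. Theorem~\ref{thm:finitecolor} gives the coloring of $K_0$, and Lemma~\ref{lem:booleandecoder} applies. The only delicate point in the whole argument is ensuring $C\cap A[2]$ has a complement of the full dimension. That follows from the cardinality count above together with linear algebra over $\F_2$, using choice for the basis extension, which ZFC permits.
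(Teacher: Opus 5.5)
Your proof is correct and is essentially the paper's argument: a small subgroup $C$ with $C+A[2]=A$ and $2C=2A$ (you take coset representatives of $A[2]$, the paper lifts generators of $2A$), then an $\F_2$-complement of $C\cap A[2]$ in $A[2]$, and in (ii) the centrality of $B_\infty$ together with $\langle C,q\rangle=C\cup Cq$. The only difference is that you put $z$ into the generating set from the outset. This avoids the paper's after-the-fact adjustment, which writes $z=a+b$ and moves $\langle b\rangle$ from $B_\infty$ into $C$.
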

\begin{proof}
For (i), lift a generating set of $2A$ and let $C_0$ be its generated subgroup, of size smaller than $\kappa$. Then $2C_0=2A$ and $A=C_0+A[2]$. A Boolean vector-space complement $B_\infty$ to $C_0\cap A[2]$ in $A[2]$ gives $A=C_0\oplus B_\infty$, with $\dim B_\infty=\kappa$. Given $z\in A$, write $z=a+b$ with $a\in C_0$ and $b\in B_\infty$, put $C=C_0\oplus\cl{b}$, and replace $B_\infty$ by a vector-space complement of $\cl{b}$ in it. Then $A=C\oplus B_\infty$, $z\in C$, $|C|<\kappa$, $2C=2C_0=2A$ because $2b=0$, and still $\dim B_\infty=\kappa$. Thus $A$ already has a $C_2^{18}$ direct factor $B\le B_\infty$, and Lemma~\ref{lem:booleandecoder} applies with $K_0$ a complementary factor of $B$ and $c$ the coloring of $K_0$ supplied by Theorem~\ref{thm:finitecolor}, which takes at most $17\le96$ values, so the decoder's hypothesis holds.

For (ii), $|A|=\kappa$ and $|\operatorname{Sq}(A)|<\kappa$, so (i), applied to $A$ written multiplicatively with $C$ chosen to contain $z$, gives $A=C\times B_\infty$ with $z\in C$, $|C|<\kappa$ and $B_\infty$ Boolean of dimension $\kappa$. Conjugation by $q$ inverts $A$, and inversion fixes every element of order two, so $q$ centralizes the Boolean complement $B_\infty$, which is therefore central in $G$. As $q$ normalizes $C$ and $q^2=z\in C$, $\cl{C,q}=C\cup Cq$, so $\cl{C,q}\cap A=C$. Hence $\cl{C,q}\cap B_\infty=C\cap B_\infty=\{1\}$ and $\cl{C,q}B_\infty\supseteq CB_\infty\cup CB_\infty q=G$; as $B_\infty$ is central, both factors are normal, and
\[
 G=\cl{C,q}\times B_\infty.
\]
Write $B_\infty=B\times B_1$ with $B\cong C_2^{18}$, and put $K_0=\cl{C,q}\times B_1$, so that $G=K_0\times B$. Theorem~\ref{thm:finitecolor}, applied to $K_0$, gives an inverse-invariant coloring $c$ of $K_0\setminus\{1\}$ with values in $\{1,\ldots,96\}$ and $\Aut_c(K_0)=\Xi(K_0)$. Hence $K_0$, $B$ and $c$ satisfy the hypotheses of Lemma~\ref{lem:booleandecoder}, and the case $|\operatorname{Sq}(A)|<\kappa$ reduces to that lemma.
\end{proof}

\subsection{Complementation and the final diameter bound}
\begin{lemma}[Complementation with a small exceptional set]\label{lem:complement}
Let $|G|=\kappa$ be infinite and $S=S^{-1}\subseteq G\setminus\{1\}$. Suppose
\[
 N=\{g\ne1:\nu_S(g)=\kappa\}
\quad\text{has size smaller than }\kappa.
\]
Then $\overline S=(G\setminus\{1\})\setminus S$ has $\nu_{\overline S}(g)=\kappa$ for every $g\ne1$.
\end{lemma}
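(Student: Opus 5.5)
Fix $g\ne1$. The plan is to count representations of $g$ in $\overline S$ by counting elements $u\in G$ with $u\in\overline S$ and $ug\in\overline S$; then $\nu_{\overline S}(g)=|\overline S\cap\overline S g^{-1}|$ up to the harmless reindexing $v=ug$. Since $|G|=\kappa$, it suffices to show that the set of $u$ failing this condition has size less than $\kappa$, or at least does not exhaust a set of size $\kappa$.

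The first step is to bound the failures. The set $\{u: u\in S\text{ or }ug\in S\text{ or }u\in\{1,g^{-1}\}\}$ contains all failures. This bound is useless in general, because $S$ may have size $\kappa$. So I would instead bound via codegrees: $u$ fails if $u\in S$ or $ug\in S$. The trick is to relate $S\cap Sg^{-1}$-type sets to $N$. Partition $G\setminus\{1\}$ and consider the elements $u\in S$. For $u\in S$ and a third element, the hypothesis that $\nu_S(h)<\kappa$ for $h\notin N\cup\{1\}$ gives, for each fixed $s\in S$, that the set of $s'\in S$ with $s^{-1}s'\notin N$ is the union over $h\notin N$ of at most one point each. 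This pointwise bound is not enough, so a cleaner route is needed.

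The cleaner route is to argue by contradiction using $S$ large. Suppose first $|S|<\kappa$. Then the failures lie in $S\cup Sg^{-1}\cup\{1,g^{-1}\}$, of size $<\kappa$, so $\nu_{\overline S}(g)=\kappa$ immediately. Otherwise $|S|=\kappa$, and I want a contradiction or another argument. Pick $s\in S$. For $s'\in S$, the difference $s^{-1}s'$ lies in $N\cup\{1\}$ or has codegree $<\kappa$. Summing, $\kappa=|S|\le |N|+1+\sum_{h\notin N}|\{s'\}|$, which again gives no bound. So consider instead many base points: the number of pairs $(s,s')\in S^2$ equals $\sum_h\nu_S(h)$. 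This is the main obstacle. With $|S|=\kappa$ there are $\kappa$ pairs, which is consistent with all $h\notin N$ having small codegree, since there are $\kappa$ such $h$. Thus the bound must be local. I would try the following: choose a set $I\subseteq S$ of size $\kappa$ such that $I^{-1}I\setminus\{1\}$ avoids $N$. This is possible greedily, since each new element must avoid $IN\cup\ldots$, a set of size $<\kappa$ at each stage when $|N|<\kappa$ and $\kappa$ is regular; for singular $\kappa$ the construction needs more care, perhaps via a cofinal decomposition. Then for $h\notin N$, and for each $x$, at most $\nu_S(h)<\kappa$ elements of $S$ differ from $x$ by $h$.

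Rather than pursue that, the decisive count is direct. $u$ fails exactly when $u\in S\cup Sg^{-1}$, so the obstacle is $|G\setminus(S\cup Sg^{-1})|=\kappa$. Apply the hypothesis to the complement $\overline S$ itself: if $|\overline S|<\kappa$, then $S$ is cofinite up to $<\kappa$, so $\nu_S(h)=\kappa$ for every $h\ne1$, and $N=G\setminus\{1\}$ has size $\kappa$, a contradiction. Hence $|\overline S|=\kappa$. By the same reasoning applied to $\overline S\cap\overline Sg^{-1}$: if this set were smaller than $\kappa$, then $G$ would be covered, up to $<\kappa$ elements, by $S\cup Sg^{-1}$, and I expect to show that this forces $\nu_S(h)=\kappa$ for $\kappa$ many $h$. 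Specifically, for $h$ ranging over a set of size $\kappa$ of translates, the sets $S\cap Sh$ are large by a pigeonhole over the two-set cover $S\cup Sg^{-1}$ applied to $\{x,xh\}$-type pairs. This final pigeonhole, forcing $|N|=\kappa$, is the step I expect to be hardest.
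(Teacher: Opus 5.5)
Your setup is sound. The case $|S|<\kappa$ is handled correctly. In the case $|S|=\kappa$ you correctly reduce the claim to showing that $\mathcal Q=G\setminus(S\cup Sg^{-1})$ has cardinality $\kappa$, arguing by contradiction from $|\mathcal Q|<\kappa$ towards $|N|=\kappa$. But that implication is the entire content of the lemma, and you only announce it (``I expect to show \ldots''). The pigeonhole over ``$\{x,xh\}$-type pairs'' is never specified: you do not say which set of size $\kappa$ is being split, or into which pieces. So the proposal has a genuine gap at its decisive step. The detours (the greedy set $I$, the worry about singular $\kappa$, the check that $|\overline S|=\kappa$) are not needed.

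The missing idea is to pigeonhole a translate of the large set $S$ itself. Suppose $|\mathcal Q|<\kappa$ and fix any $r$. The set $Sr$ has cardinality $|S|=\kappa$ and is covered by $(Sr\cap S)\cup(Sr\cap Sg^{-1})\cup\mathcal Q$. Here $|Sr\cap S|=\nu_S(r)$, and $|Sr\cap Sg^{-1}|=|Srg\cap S|=\nu_S(rg)$. For every infinite $\kappa$, regular or not, a finite union of sets of size $<\kappa$ has size $<\kappa$. Hence every $r\notin\{1,g^{-1}\}$ satisfies $r\in N$ or $rg\in N$. That gives $G\setminus\{1,g^{-1}\}\subseteq N\cup Ng^{-1}$, so $|N|=\kappa$, which is the conclusion you were after.

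The paper argues the same way, with $Sg$ in place of $Sg^{-1}$, but more directly. It picks a single $r\notin\{1,g^{-1}\}\cup N\cup Ng^{-1}$, which exists because this set has size $<\kappa$. The cover of $Sr$ by three sets of size $<\kappa$ then contradicts $|Sr|=\kappa$ at once. Finally, removing the two points $1,g^{-1}$ from $\mathcal Q$ gives $\nu_{\overline S}(g)=\kappa$.
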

\begin{proof}
If $|S|<\kappa$, two translates and finitely many points exclude only a smaller set, so the conclusion follows. Otherwise $|S|=\kappa$. Fix $g\ne1$. To prove that the complement has full codegree at this difference, suppose instead that $\mathcal Q=G\setminus(S\cup Sg)$ is small, and choose
\[
 r\notin\{1,g\}\cup N\cup N^{-1}g.
\]
Then
\[
 |Sr\cap S|=\nu_S(r)<\kappa,\qquad
 |Sr\cap Sg|=\nu_S(gr^{-1})<\kappa.
\]
Together with $\mathcal Q$, these two intersections cover $Sr$, a set of size $\kappa$, by a finite union of smaller sets, a contradiction. Hence $|\mathcal Q|=\kappa$. Removing the possible points $1,g$ then yields $|\overline S\cap\overline Sg|=\kappa$, which is the required complement codegree.
\end{proof}
The order $gr^{-1}$ is intentional in this calculation, and the argument uses neither commutativity nor a regular-cardinal assumption.

\subsection{Refining the finite palette gives the full abundance statement}
\begin{proposition}[Exceptional abundance]\label{prop:exceptmany}
Let $G$ be an uncountable non-Boolean abelian or generalized-dicyclic group of cardinality $\kappa$. Then $G$ has $2^\kappa$ pairwise nonisomorphic realizations of $\Xi(G)$, each with $\kappa$ common neighbors at every nonidentity difference and with diameter two.
\end{proposition}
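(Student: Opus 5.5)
The plan is to refine the finite coloring by a binary coordinate, feed the refined coloring into the two decoders already built (codegree bands, Lemma~\ref{lem:abelianbands}, and the Boolean-factor decoder, Lemma~\ref{lem:booleandecoder}), and then complement. Let $c_0$ be a coloring from Theorem~\ref{thm:finitecolor} with at most $17$ colors and $\Aut_{c_0}(K)=\Xi(K)$. Here $K=G$ in the case $|\operatorname{Sq}(A)|=\kappa$, and $K=K_0$ from Lemma~\ref{lem:abeliansplit} otherwise. For a binary function $b$ on $\mathcal I(K)$, with $b(\invp z)=0$ in the dicyclic band case, put $c_b(h)=2c_0(h)-b(\invp h)$. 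This coloring is inverse-invariant, takes values in $\{1,\ldots,34\}$, and is finer than $c_0$. Therefore
\[
\Xi(K)\le\Aut_{c_b}(K)\le\Aut_{c_0}(K)=\Xi(K).
\]
Applying the appropriate decoder to $c_b$ gives a connection set $T_b$ with $\Aut(\Cay(G,T_b))=\Xi(G)$, and the colors $c_b$ are read off intrinsically: from codegrees via the bands $[12j,12j+8]$, or from cross-fiber degrees $2c_b-1$.

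Nonisomorphism comes next. An isomorphism between the graphs built from $b$ and $b'$ induces, by the canonical readout (Lemma~\ref{lem:abelianbands}, or Lemma~\ref{lem:booleandecoder}(ii)), a bijection $\phi$ of $K$ carrying $c_b$ to $c_{b'}$ on differences. Reducing modulo two shows that $\phi$ carries $c_0$ to $c_0$, so $\phi$ lies in $\Aut_{c_0}(K)=\Xi(K)$. An element of $\Xi(K)$ preserves inverse pairs of differences, so $c_b(\invp{d})=c_{b'}(\invp{d})$ for every $d$ that occurs as a difference, which is every $d$. Hence $b=b'$. Since $|\mathcal I(K)|=\kappa$ in every case ($K_0$ still has cardinality $\kappa$ because it contains the complementary part of $B_\infty$), this gives $2^\kappa$ pairwise nonisomorphic graphs.

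The hard part is the final step: $\kappa$ common neighbors at every nonidentity difference, together with diameter two. I would pass to complements, since complementation preserves both automorphism groups and nonisomorphism. In the band case, every codegree of $T_b$ is finite except possibly at $z$, so $N\subseteq\{z\}$ has size less than $\kappa$, and Lemma~\ref{lem:complement} gives $\nu_{\overline{T_b}}(g)=\kappa$ for every $g\ne1$. In the split case, $S$ meets each fiber $\{k\}\times B$ in a set of size at most $191$, and the fiber $\{1\}\times B$ is finite. For $g=(h,s)\ne1$, I would count vertices $(k,x)$ with $k\notin\{1,h\}\cup\{k': (k',\cdot)\in S\}$-type exclusions; more precisely, both $(k,x)$ and $(k,x)(h,s)^{-1}$ should lie outside $S$. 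For each of the $\kappa$ fibers with $k\ne1$ and $kh^{-1}\ne1$, only at most $2\cdot191<2^{18}$ values of $x$ fail, so each such fiber contributes a witness. This yields $\kappa$ common neighbors in the complement. The exceptional fibers $k=1$ and $k=h$ are finite and can be ignored. Finally, neither graph is complete, because $\Xi(G)\ne\Sym(G)$, so both complements have diameter exactly two.
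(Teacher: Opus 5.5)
Your proposal is correct and follows the paper's proof essentially step for step. You refine $c_0$ by a binary coordinate (with $b(\invp z)=0$ in the dicyclic band case), decode with Lemma~\ref{lem:abelianbands} or Lemma~\ref{lem:booleandecoder}(ii), recover $b$ because the induced map lies in $\Aut_{c_0}(K)=\Xi(K)$, and complement via Lemma~\ref{lem:complement} or the $191$-point fiber tags; your per-fiber bound of at most $2\cdot191<2^{18}$ bad $B$-coordinates replaces the paper's single choice $t\in Q\cap(Q+y)$ to the same effect. One small slip: with $c_b=2c_0-b$, the coarse color is recovered as $c_0=\lceil c_b/2\rceil$, not by reducing modulo two (which recovers $b$), but this does not affect the argument.
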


The proof refines the coloring of Theorem~\ref{thm:finitecolor} by $2^\kappa$ binary functions $b$, encodes each
refinement as a Cayley graph by Lemma~\ref{lem:abelianbands} or Lemma~\ref{lem:booleandecoder} so that any
isomorphism between two of these graphs recovers $b$, and passes to complements to obtain $\kappa$ common neighbors.
\begin{proof}
Call the cases with $|2G|=\kappa$ for abelian $G$, and with $|\operatorname{Sq}(A)|=\kappa$ for generalized-dicyclic $G=\cl{A,q}$, the large-square cases, and put $K_0=G$ there. Otherwise (the Boolean-factor case), Lemma~\ref{lem:abeliansplit}, in its abelian and in its dicyclic part, gives $G=K_0\times B$ with $B=C_2^{18}$, and $|K_0|=\kappa$ because $B$ is finite. Fix a coloring $c_0$ of $K_0$ from Theorem~\ref{thm:finitecolor}, with at most $17$ colors and $\Aut_{c_0}(K_0)=\Xi(K_0)$. Let $b$ range over the binary functions on the nonidentity inverse pairs of $K_0$, with $b(\invp z)=0$ for every choice in the dicyclic large-square case, where $z=q^2$, and refine to $c_b=(c_0,b)$, encoded by one fixed numerical palette of at most $34$ colors. The coloring $c_b$ is inverse-invariant, has finitely many colors, each at least $1$, and satisfies $\Aut_{c_b}(K_0)=\Xi(K_0)$, since $\Xi(K_0)$ preserves every inverse-invariant coloring and $\Aut_{c_b}(K_0)\subseteq\Aut_{c_0}(K_0)=\Xi(K_0)$. Since $K_0$ has $\kappa$ nonidentity inverse pairs, there are $2^\kappa$ choices of $b$.

In the large-square cases, Lemma~\ref{lem:abelianbands} applied to $c_b$ gives a symmetric connection set $T_b$ with $\Aut(\Cay(G,T_b))=\Xi(G)$ and $\nu_{T_b}(g)\le12c_b(g)+8$ for every $g\notin\cl z$, where $\cl z$ is trivial in the abelian case. Its graph recovers $c_b$ at every distinct pair whose difference lies outside $\cl z$, and in the dicyclic case it recognizes the difference $z$ by its unique infinite codegree. As $c_b(z)=(c_0(z),0)$ for every choice, an isomorphism $\Cay(G,T_b)\to\Cay(G,T_{b'})$ preserves the $c_0$ coordinate of every difference. It therefore belongs to $\Aut_{c_0}(G)=\Xi(G)$ and preserves every inverse pair of differences, so the recovered binary coordinates agree and $b=b'$.

In the Boolean-factor case, Lemma~\ref{lem:booleandecoder}, with the tag codes $\Pi_1\subseteq\cdots\subseteq\Pi_{96}$ of Lemma~\ref{lem:tags}, applies to $c_b$ and gives a connection set $S_b\subseteq K_0\times B=G$ with $\Aut(\Cay(G,S_b))=\Xi(G)$. By its part (ii), an isomorphism $\Cay(G,S_b)\to\Cay(G,S_{b'})$ maps $B$-fibers onto $B$-fibers, and the permutation $\phi$ of $K_0$ that it induces on the fibers satisfies $c_{b'}(\phi(h)\phi(k)^{-1})=c_b(hk^{-1})$ for all $h\ne k$. The $c_0$ coordinate gives $\phi\in\Aut_{c_0}(K_0)=\Xi(K_0)$, so $\phi$ preserves every inverse pair of differences, the recovered binary coordinates agree, and $b=b'$.

Finally let $\Gamma_b$ be $\Cay(G,\overline{T_b})$, respectively $\Cay(G,\overline{S_b})$, the bar denoting the complement in $G\setminus\{1\}$. Complementation preserves automorphism groups and nonisomorphism, so the graphs $\Gamma_b$ are $2^\kappa$ pairwise nonisomorphic realizations of $\Xi(G)$. In the large-square cases the band bound gives $\nu_{T_b}(g)<\kappa$ for $g\notin\cl z$, so the set $N$ of Lemma~\ref{lem:complement} for $T_b$ lies in $\{z\}$, and that lemma gives $\nu_{\overline{T_b}}(g)=\kappa$ for every $g\ne1$. In the Boolean-factor case, part (iii) of Lemma~\ref{lem:booleandecoder} gives $S_b\cap(\{k\}\times B)\subseteq\{k\}\times\Pi$ for every $k\ne1$, with $\Pi$ the set of Lemma~\ref{lem:tags}; put $Q=B\setminus\Pi$. Fix a nonidentity $(h,y)\in K_0\times B$. Since $|\Pi|=191<|B|/2$, the sets $Q$ and $Q+y$ meet; fix $t\in Q\cap(Q+y)$. For every $k\in K_0\setminus\{1,h^{-1}\}$, the points $(k,t)$ and $(kh,t+y)$ lie in $\overline{S_b}$: both first coordinates are nonidentity, and both $B$-coordinates lie in $Q$, outside every tag. Their difference $(k,t)^{-1}(kh,t+y)$ is $(h,y)$, and there are $\kappa$ choices of $k$, so $\nu_{\overline{S_b}}((h,y))=\kappa$. In every case two distinct vertices of $\Gamma_b$ therefore have $\kappa$ common neighbors, so they are at distance at most two; the connection sets $T_b$ and $S_b$ are nonempty, so $\Gamma_b$ is not complete and has diameter two. \end{proof}

\begin{proof}[Proof of Theorem~\ref{thm:all}]
Let $G$ be an infinite group of cardinality $\kappa$. If $G$ is countable, Theorem~\ref{thm:countable} gives
$2^{\aleph_0}=2^\kappa$ pairwise nonisomorphic realizations of $\Xi(G)$ of diameter two with $\kappa$ common neighbors
at every distinct pair; for a countable regular-case group these are GRRs, since $\Xi(G)=R(G)$ by Theorem~\ref{thm:inversepairindex}.
If $G$ is uncountable and in a regular case, Theorem~\ref{thm:cube}, with $A=R=\varnothing$, gives $2^\kappa$ pairwise nonisomorphic
GRRs, which realize $\Xi(G)=R(G)$ by Theorem~\ref{thm:inversepairindex}; Theorem~\ref{thm:cube}\ref{thm:cube-witness} gives $\kappa$ common neighbors of every distinct pair $x,y$,
hence diameter at most two.
If $G$ is uncountable and
exceptional, Proposition~\ref{prop:exceptmany} gives $2^\kappa$ pairwise nonisomorphic realizations of $\Xi(G)$ with
$\kappa$ common neighbors at every nonidentity difference and diameter two.

The two optimality statements are immediate. The complete graph has automorphism group $\Sym(G)$, strictly larger than
$\Xi(G)$ for infinite $G$: a transposition between elements in different inverse pairs fixing $1$ violates the
inverse-pair condition, so diameter one is impossible for a realizing graph. Finally, $2^\kappa$ is the maximum
possible number of graphs on a fixed set of size $\kappa$, because $|[G]^2|=\kappa$.
\end{proof}

\section{Finite prescriptions and algebraic consequences}\label{sec:consequences}\subsection{Closed families with prescribed finite data}\label{sec:prescriptions}
Use the connection-set space $\mathcal X_G$ defined in Section~\ref{sec:notation}.
\begin{theorem}[A closed Cantor cube in every finite cylinder]\label{thm:finitepres}
Let $G$ be an infinite regular-case group of cardinality $\kappa$. For disjoint finite symmetric $A,R\subseteq G\setminus\{1\}$, the cylinder
\[
\{S\in\mathcal X_G:A\subseteq S,\ S\cap R=\varnothing\}
\]
contains a closed subspace homeomorphic to $\{0,1\}^{\kappa}$ whose distinct members give pairwise nonisomorphic GRRs. These graphs have $\kappa$ common neighbors at each distinct pair. Both each graph and its complement have diameter two. At uncountable cardinality the witness and domination conclusions of Theorem~\ref{thm:cube} and Corollary~\ref{cor:domination} are retained.
\end{theorem}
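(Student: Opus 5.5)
The plan is to split by cardinality and read the result off the two families already built, each of which was constructed to respect a finite prescription.

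\emph{Countable case.} Let $\kappa=\aleph_0$. Apply Theorem~\ref{thm:countable} to $H=G$ with the prescription $A_0=A$, $R_0=R$, but use its second family $S_b=(G\setminus\{1\})\setminus U_b$. That theorem builds $U_b$ from the interchanged prescription, so each $S_b$ satisfies $A\subseteq S_b$ and $S_b\cap R=\varnothing$. By the last sentence of Theorem~\ref{thm:countable}, $b\mapsto S_b$ is a closed Cantor family. We then check the remaining properties one at a time.
\begin{itemize}
\item $\Cay(G,S_b)$ is the complement of $\Cay(G,U_b)$. Both realize $\Xi(G)$, which equals $R(G)$ by Theorem~\ref{thm:inversepairindex} in the regular case, so the graphs are GRRs.
\item The graphs are pairwise nonisomorphic, because the complements $\Cay(G,U_b)$ are.
\item Every finite vertex set, and in particular every distinct pair, has infinitely many ($=\kappa$) common neighbors in $\Cay(G,S_b)$.
\item Diameter two holds for both the graph and its complement, since Theorem~\ref{thm:countable} gives it for both $\Cay(G,U_b)$ and $\Cay(G,S_b)$.
\end{itemize}
There are no witness or domination clauses to check here.

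\emph{Uncountable case.} Apply Theorem~\ref{thm:cube} with the given $A,R$. Lemma~\ref{lem:core} was stated with finite prescriptions, so its core $H$ contains $A\cup R$, and $U$ satisfies $A\subseteq U$, $U\cap R=\varnothing$. Part~\ref{thm:cube-cube} of Theorem~\ref{thm:cube} then gives a closed copy of $\{0,1\}^\kappa$ inside the cylinder. Part~\ref{thm:cube-grr} gives pairwise nonisomorphic GRRs. Part~\ref{thm:cube-witness} gives $\kappa$ common neighbors at each distinct pair, diameter two for the graph and its complement, and the witness conclusion itself. Corollary~\ref{cor:domination} is stated for exactly these $S_b$ (arbitrary $A,R$ from Theorem~\ref{thm:cube}), so the domination and homogeneous-set conclusions are retained.

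The only step needing real care is the bookkeeping in the countable case. One must use the complemented family, with its interchanged prescription, so that both the cylinder condition and the "$\kappa$ common neighbors" clause refer to the same graphs $\Cay(G,S_b)$. One must also confirm that diameter two is asserted for both members of each complementary pair, which Theorem~\ref{thm:countable} does state. Everything else is a direct citation.
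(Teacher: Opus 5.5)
Your proposal is correct and follows the paper's proof essentially line for line. In the countable case you use the complemented family $S_b$ of Theorem~\ref{thm:countable}, obtained by running the construction with $R$ selected and $A$ forbidden. In the uncountable case you cite Theorem~\ref{thm:cube} with the given $A,R$ directly, and then Corollary~\ref{cor:domination} for the same family.
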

\begin{proof}
For countable $G$, use the complemented outputs
$S=S_b$ of Theorem~\ref{thm:countable}. To obtain
$A\subseteq S$ and $S\cap R=\varnothing$, apply that theorem
with $R$ selected and $A$ forbidden. The graphs $\Cay(G,S)$ realize $\Xi(G)=R(G)$, so they are GRRs, distinct $b$ give nonisomorphic graphs, every distinct pair has $\aleph_0=\kappa$ common neighbors, and each graph and its complement have diameter two. Its statement also makes $b\mapsto S=S_b$ a continuous injection of $\{0,1\}^{\N}=\{0,1\}^{\kappa}$ into the cylinder. Compactness makes its image closed.

For uncountable $G$, Theorem~\ref{thm:cube} gives the closed copy of $\{0,1\}^{\kappa}$ in the cylinder, the pairwise nonisomorphic GRRs, the $\kappa$ common neighbors at each distinct pair, the two diameters and the witness threshold, and Corollary~\ref{cor:domination} applies to the same family.
\end{proof}
Thus each basic finite cylinder contains the maximum number of uncolored isomorphism types in a compact, continuously parameterized family. This does not say that the full set of realizing connection sets is closed, open, comeagre, or of probability one.

\subsection{Affine isomorphisms, rooted multiplication and invariant metrics}\label{sec:formal}
The following standard consequences are included to record what the all-cardinality
supply of GRRs gives immediately.

A Cayley graph is a \emph{CI-graph on $G$} if any isomorphic Cayley graph
on $G$ is its image under a group automorphism of $G$. A group law
on the vertex set of a graph is \emph{compatible} if its right translations
are graph automorphisms.

\begin{proposition}[Affine isomorphisms]\label{prop:ci}
If $\Gamma=\Cay(G,S)$ is a GRR, every transitive subgroup of
$\Aut(\Gamma)$ is $R(G)$. Hence any isomorphism
$f:\Cay(G,S)\to\Cay(K,T)$ has the form $f(x)=\phi(x)k_0$ for a group
isomorphism $\phi:G\to K$ and $k_0\in K$; in particular $\Gamma$ is a
CI-graph on $G$ and is not a Cayley graph of a nonisomorphic group.
\end{proposition}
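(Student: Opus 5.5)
The plan is to reduce everything to a transitive subgroup of $\Aut(\Gamma)$ and then use the fact that in a GRR the identity stabilizer is trivial. Since $\Aut(\Gamma)=R(G)$ acts regularly, every point stabilizer in $\Aut(\Gamma)$ is trivial. So any subgroup $T\le\Aut(\Gamma)$ acts semiregularly. If $T$ is also transitive, it acts regularly. Both $T$ and $R(G)$ are then regular subgroups of the group $\Aut(\Gamma)=R(G)$. Given $r\in R(G)$, transitivity of $T$ gives some $t\in T$ with $t(1)=r(1)$. Then $t^{-1}r\in R(G)$ fixes $1$, so $t^{-1}r$ is the identity. Hence $r=t\in T$, which gives $T=R(G)$. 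This settles the first sentence.

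For the isomorphism statement, let $f:\Cay(G,S)\to\Cay(K,T)$ be an isomorphism. Conjugating the right regular action of $K$ on $\Cay(K,T)$ back through $f$ gives the subgroup $f^{-1}R(K)f\le\Aut(\Gamma)$, and this subgroup is transitive. By the first part it equals $R(G)$. Therefore for each $k\in K$ there is a unique $g=\psi(k)\in G$ with $f^{-1}\rho_k f=\rho_g$, where $\rho$ denotes right translation. The map $\psi$ is a bijection, and since conjugation respects composition, $\psi$ is a group isomorphism up to the order convention of right actions. Set $k_0=f(1)$ and $\phi(x)=f(x)k_0^{-1}$. The conjugation identity reads $f(xg)=f(x)\psi^{-1}(g)$. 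Evaluating at $x=1$ gives $f(g)=k_0\,\psi^{-1}(g)$.

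The resulting formula has the form $f(x)=k_0\phi'(x)$ for an isomorphism $\phi'=\psi^{-1}$. We rewrite it as $f(x)=\phi(x)k_0$ with $\phi(x)=k_0\phi'(x)k_0^{-1}$. This $\phi$ is again an isomorphism, obtained by composing $\phi'$ with an inner automorphism of $K$. The main point requiring care is this bookkeeping of the right Cayley convention: one has to check that $\rho_g\mapsto\rho_{\psi^{-1}(g)}$ really is multiplicative rather than anti-multiplicative. Right translations compose as $\rho_g\rho_h=\rho_{hg}$ on both sides, so the antihomomorphism twists cancel and $\psi$ is a genuine homomorphism.

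The two consequences then follow directly.
\begin{itemize}
\item If $K=G$, then $\phi$ is an automorphism of $G$. Composing $f$ with the translation by $k_0^{-1}$ gives $\phi$ itself, which maps $\Cay(G,S)$ onto $\Cay(G,T)$. Hence $T=\phi(S)$, and $\Gamma$ is a CI-graph.
\item Any isomorphism from $\Gamma$ to a Cayley graph of $K$ forces $G\cong K$, so $\Gamma$ is not a Cayley graph of a nonisomorphic group.
\end{itemize}
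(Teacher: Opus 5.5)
Your proposal is correct and follows the paper's argument: the uniqueness of the element of a regular group sending $1$ to a given vertex gives the first claim, and conjugating $R(K)$ through $f$ gives the affine form. The only difference is bookkeeping. The paper first normalizes to $f(1)=1$, so that $f(xg)=f(x)f(g)$ follows directly. You instead keep $k_0=f(1)$, obtain $f(x)=k_0\psi^{-1}(x)$, and absorb $k_0$ by an inner automorphism, which is equivalent; your check that $\psi$ is multiplicative under $\rho_g\rho_h=\rho_{hg}$ is also right.
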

\begin{proof}
A transitive subgroup of a regular group contains the unique element sending a
fixed vertex to each target, so it is the whole regular group. Conjugating
$R(K)$ through $f$ and normalizing $f(1)=1$ gives
$f(xg)=f(x)f(g)$; undoing the normalization yields the stated affine form.
\end{proof}

\begin{corollary}[A rooted graph determines multiplication]
\label{cor:rooted}On the vertex set of a GRR, fixing an identity vertex determines a unique
compatible group law; with arbitrary identity $a$, the compatible law is
$x\star_a y=xa^{-1}y$, so there are exactly $|G|$ such labeled laws.
\end{corollary}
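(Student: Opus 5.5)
The plan is to use Proposition~\ref{prop:ci} in rooted form. Fix the GRR $\Gamma=\Cay(G,S)$ and view it as a graph on a bare vertex set $V$. A compatible group law $\star$ on $V$ makes $\Gamma$ a Cayley graph of $(V,\star)$ with respect to some connection set. Indeed, its right translations are automorphisms, and they act regularly, so $\Gamma=\Cay((V,\star),T)$ with $T$ the neighborhood of the $\star$-identity.

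Fix an identity vertex $e\in V$, and suppose $\star$ is a compatible law with identity $e$. Apply Proposition~\ref{prop:ci} to the identity map $\Cay(G,S)\to\Cay((V,\star),T)$. It gives $x=\phi(x)\star k_0$ for a group isomorphism $\phi:G\to(V,\star)$ and some $k_0$. An equivalent and more direct route uses the first sentence of the proposition. The group $R_\star$ of right $\star$-translations is a transitive subgroup of $\Aut(\Gamma)$, so it equals $R(G)$. Hence for each $y$, the map $x\mapsto x\star y$ is the unique element of $R(G)$ sending $e$ to $y$. That element is right multiplication by $e^{-1}y$ in the original law, so
\[
x\star y=x e^{-1}y .
\]
This determines $\star$ completely from the identity vertex $e$, which proves uniqueness.

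For existence, check that $x\star_a y=xa^{-1}y$ is a group law. It is the transport of the original law along the bijection $x\mapsto a^{-1}x$, with identity $a$ and inverse $x\mapsto ax^{-1}a$. Its right translations $x\mapsto x(a^{-1}y)$ lie in $R(G)$, so they are automorphisms and the law is compatible.

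Finally, count. Distinct choices of $a$ give distinct laws, since the identity element differs. Every compatible law has some identity $a$ and therefore equals $\star_a$. Hence there are exactly $|G|$ labeled compatible laws. The only subtle point is that the identity-vertex label is part of the data, because the laws $\star_a$ are all isomorphic to one another. This is bookkeeping rather than a real obstacle.
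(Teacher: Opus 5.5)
Your proposal is correct and matches the paper's proof. The right $\star$-translations form a transitive subgroup of $\Aut(\Gamma)=R(G)$ and hence equal $R(G)$; evaluating at the identity vertex gives $x\star y=xe^{-1}y$; and the existence check for $\star_a$ and the count of $|G|$ laws are the same as in the paper. The opening detour through the affine form in Proposition~\ref{prop:ci} is unnecessary but harmless.
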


\begin{proof}
Let $\Gamma=\Cay(G,S)$ and let $\star$ be a compatible law with identity $a$.
Its right translations $x\mapsto x\star y$ are automorphisms of $\Gamma$ and
form a group acting regularly on the vertices, so by
Proposition~\ref{prop:ci} they form $R(G)$. Hence for each $y$ there is
$h\in G$ with $x\star y=xh$ for all $x$; taking $x=a$ gives $y=ah$, so
$x\star y=xa^{-1}y$. Conversely, $x\star_a y=x(a^{-1}y)$ is a group law with
identity $a$ whose right translations are those of $R(G)$. So each $a\in G$
is the identity of exactly one compatible law, and distinct identities give
distinct laws, which makes $|G|$ laws in all.
\end{proof}

\begin{corollary}[Three-valued path metrics]\label{cor:metric}
Every infinite group $G$ admits a complete, uniformly discrete, right-invariant
metric with values $0,1,2$ and isometry group $\Xi(G)$, and no metric on $G$ with only two values has isometry group $\Xi(G)$.
\end{corollary}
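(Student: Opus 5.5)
For existence, I would take the path metric of a realizing graph from Theorem~\ref{thm:all}. Let $\Gamma=\Cay(G,S)$ with $\Aut(\Gamma)=\Xi(G)$ and $\diam\Gamma=2$, and put $d(x,y)=d_\Gamma(x,y)$. The values are $0$, $1$ (adjacent) and $2$ (distinct, nonadjacent), so $d(x,y)=0$ if $x=y$, $1$ if $xy^{-1}\in S$, and $2$ otherwise. This is right-invariant because right translations are graph automorphisms. It is uniformly discrete, since distinct points are at distance at least $1$. It is complete, since every Cauchy sequence is eventually constant. The isometries of a graph path metric are exactly the graph automorphisms: an isometry preserves the distance-one relation, and conversely an automorphism preserves path distance. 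Hence the isometry group is $\Aut(\Gamma)=\Xi(G)$. Theorem~\ref{thm:all} supplies such $\Gamma$ for every infinite $G$, so existence follows once I note that $\diam\Gamma=2$ makes the metric genuinely three-valued.

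For nonexistence, suppose a metric $d$ on $G$ takes only two values, $0$ and some $c>0$. Then every pair of distinct points is at distance $c$, so every permutation of $G$ is an isometry, and the isometry group is $\Sym(G)$. I would then show $\Sym(G)\ne\Xi(G)$ for infinite $G$, exactly as in the optimality part of the proof of Theorem~\ref{thm:all}. Since $G$ is infinite, there are two nonidentity elements $u,v$ with $\invp{u}\ne\invp{v}$. The transposition $F$ of $u$ and $v$ fixes $1$, and
\[
F(u)F(1)^{-1}=v\notin\invp{u}=\invp{u\cdot1^{-1}},
\]
so $F\notin\Xi(G)$.

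No step is a real obstacle, since the substantive input is Theorem~\ref{thm:all}. The point needing care is the identification of the isometry group with $\Aut(\Gamma)$: a permutation preserving $d$ preserves the level set $d=1$, which is the edge set, and conversely. I would also state explicitly that "complete" refers to Cauchy completeness of the metric, which is trivial for a uniformly discrete metric.
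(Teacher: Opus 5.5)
Your proposal is correct and follows the paper's own proof. You take the shortest-path metric of a diameter-two realizing graph from Theorem~\ref{thm:all} for existence. For the converse, you observe that a two-valued metric is equilateral off the diagonal, so its isometry group is $\Sym(G)\ne\Xi(G)$, via the same transposition argument used in the optimality part of Theorem~\ref{thm:all}.
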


\begin{proof}
Use the shortest-path metric of a realizing graph of diameter two given by Theorem~\ref{thm:all}. Two total distance values would make the metric equilateral off the diagonal and give the full symmetric group as isometry group, not $\Xi(G)$.
\end{proof}

\subsection{Oriented regular representations}\label{sec:orr}

\begin{proposition}[Oriented regular representations]\label{prop:orr}
Let $G$ be a group. For $T\subseteq G\setminus\{1\}$, let $\Cay^{\to}(G,T)$ be the digraph with an arc from $x$ to $y$ exactly when $xy^{-1}\in T$. It is an \emph{oriented regular representation} (ORR) if $T$ generates $G$, $T\cap T^{-1}=\varnothing$ and its automorphism group is $R(G)$.
\begin{enumerate}[label=(\roman*),nosep]
\item If $T$ generates $G$ and $T\cap T^{-1}=\varnothing$, then $\Aut(\Cay^{\to}(G,T))\cap\Xi(G)=R(G)$.
\item If $\Cay(G,S)$ is a realization of $\Xi(G)$ with $S$ generating $G$ and containing no involution, then every orientation $T$ of $S$ ($T\cup T^{-1}=S$, $T\cap T^{-1}=\varnothing$) gives an ORR.
\item Every uncountable abelian group $A$ with $|2A|=|A|$, and every uncountable generalized dicyclic group with $|\operatorname{Sq}(A)|=|A|$, admits an ORR.
\item A generalized dihedral group $A\rtimes C_2$, with $C_2$ acting by inversion, admits no ORR.
\end{enumerate}
\end{proposition}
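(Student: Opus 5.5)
I would treat the four parts separately, with (i) as the main input for (ii) and (iii).

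For (i), the plan is to compare an automorphism $F$ of the digraph with the condition defining $\Xi(G)$. Take $F\in\Aut(\Cay^{\to}(G,T))\cap\Xi(G)$ and compose with a right translation so that $F(1)=1$. Membership in $\Xi(G)$ gives $F(tx)\in\{t,t^{-1}\}F(x)$ for every $t$ and $x$. For $t\in T$ there is an arc from $tx$ to $x$, so there is an arc from $F(tx)$ to $F(x)$. Hence $F(tx)F(x)^{-1}\in T$. Since $T\cap T^{-1}=\varnothing$, the option $t^{-1}$ is excluded unless $t=t^{-1}$, and that would put $t$ in $T\cap T^{-1}$. So $F(tx)=tF(x)$ for all $t\in T$ and all $x$. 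As $T$ generates $G$, propagating along words from $F(1)=1$ gives $F=\mathrm{id}$. The inclusion $R(G)\le$ both groups is clear, so the intersection is $R(G)$.

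For (ii), take an orientation $T$ of $S$. It generates $G$, because $S=T\cup T^{-1}$ generates $G$, and $T\cap T^{-1}=\varnothing$ holds by definition. It is possible to choose one element from each inverse pair precisely because $S$ has no involutions. Forgetting orientations maps $\Aut(\Cay^{\to}(G,T))$ into $\Aut(\Cay(G,S))=\Xi(G)$. Part (i) then shows the automorphism group is $R(G)$, so $\Cay^{\to}(G,T)$ is an ORR.

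For (iii), apply (ii) to the connection set $T$ of Lemma~\ref{lem:abelianbands}, built from any coloring $c$ as in Theorem~\ref{thm:finitecolor}. That lemma gives the three inputs (ii) needs: $\Aut(\Cay(G,T))=\Xi(G)$, $T$ contains no involution, and the diameter is at most two. The diameter bound gives generation, since every $d\ne1$ has positive codegree. The large-square hypothesis is exactly the one under which the band construction runs. The step to check here is that the lemma's "no involution" clause holds in the abelian case as well, which it does because the squares of block points lie outside $H_\sigma$.

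For (iv), write $G=A\rtimes\cl{s}$ with $s$ an involution inverting $A$. Every element outside $A$ is an involution, so an admissible $T$ (with $T\cap T^{-1}=\varnothing$) lies in $A$. Then $T$ cannot generate $G$, so no ORR exists. If the intended generalized dihedral group could be $A$ itself, that is, $A$ Boolean, then inversion is trivial and $T$ must be empty; in either case there is no generating $T$.

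The main obstacle is (iii). The work there is verifying precisely that the hypotheses of (ii) follow from Lemma~\ref{lem:abelianbands}, which is why I would check the no-involution clause carefully in both the abelian and dicyclic cases. Parts (i), (ii) and (iv) are routine.
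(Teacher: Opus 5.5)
Your proposal is correct and follows the paper's proof part by part: (i) uses that $F(tx)F(x)^{-1}$ lies in $\invp{t}\cap T=\{t\}$ and then propagates along words in the generating set $T$, (ii) passes through the underlying undirected graph $\Cay(G,S)$, (iii) applies (ii) to the band connection sets of Lemma~\ref{lem:abelianbands} using their involution-freeness and positive codegrees, and (iv) notes that every element outside $A$ is an involution, so an admissible $T$ lies in $A$ and cannot generate. Your aside about Boolean $A$ in (iv) is unnecessary, because the main argument already covers that case.
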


\begin{proof}
(i) For $t\in T$ and $x\in G$ the pair $(tx,x)$ is an arc, so an automorphism $F\in\Xi(G)$ satisfies $F(tx)F(x)^{-1}\in\invp{t}\cap T=\{t\}$; hence $F(tx)=tF(x)$ for all $t\in T$, therefore for all $t\in\langle T\rangle=G$, and $F$ is the right translation by $F(1)$.

(ii) The orientation $T$ generates $G$, since $T\cup T^{-1}=S$ does, and $\Aut(\Cay^{\to}(G,T))\subseteq\Aut(\Cay(G,S))=\Xi(G)$; so (i) gives $\Aut(\Cay^{\to}(G,T))=R(G)$.

(iii) The connection sets $S$ of Lemma~\ref{lem:abelianbands}, the decoders of the proof of Proposition~\ref{prop:exceptmany} taken before complementation, satisfy the hypotheses of (ii): by that lemma $\Cay(G,S)$ realizes $\Xi(G)$. In the abelian case of Section~\ref{sec:exceptions}, with $|2A|=\kappa$, every inserted point $s\in\{t,t+a,-t,-t-a\}$ has $2s\in\{\pm2t,\pm(2t+2a)\}$, and $2t$ lies outside a subgroup containing $2a$, so $2s\ne0$. In the generalized dicyclic case, with $|\operatorname{Sq}(A)|=\kappa$, the squares of the inserted points are $t^{\pm2}$, $t^{\pm2}a^{\pm2}$, and $(tg)^2=(ztg)^2=z$, none equal to $1$: as $2t$ in the abelian case, $t^2$ lies outside a subgroup containing $a^2$, so $t^{\pm2}\ne1$ and $t^{\pm2}a^{\pm2}\ne1$, and $z\ne1$. In both cases every nonidentity codegree is positive by Lemma~\ref{lem:abelianbands}, since each lower band $12c(g)$ is at least $12$ and $\nu_S(z)$ is infinite, so the connection set generates.

(iv) Every element outside $A$ has order two, and each generating set of $A\rtimes C_2$ contains an element outside $A$, hence an involution $t=t^{-1}$. The set $T$ of an ORR generates $G$ and has $T\cap T^{-1}=\varnothing$, so it contains no involution, and the group admits no ORR.
\end{proof}

\begin{remark}[Oriented representations]\label{rem:orr}
Parts (i) and (ii) of Proposition~\ref{prop:orr} are the orientation step of \cite[Lemma~5]{LdSJAC}. A group that is not generalized dihedral is generated by its non-involutions \cite[Proposition~5.2]{Babai78}, and the converse is immediate. Finitely generated groups were classified in \cite[Proposition~7.2]{LdSAHL} and finite groups in \cite{MorrisSpigaORR}. We ask whether every infinite group that is not generalized dihedral admits an involution-free realization of $\Xi(G)$, and hence an ORR. The regular-case construction of Section~4 does not answer this directly: removing the involutions from a bit-one layer creates error pairs whose degree is $|Hg\cap\operatorname{Inv}(G)|$ for $g$ in that layer, which is infinite for every admissible countable core in, for example, $D_\infty\times B\times\mathbb Z$ with $B$ an uncountable Boolean group.
\end{remark}

\section{Computational checks and further questions}\label{sec:checks}
The construction recovers, in order, the robust initial classes, the adjacency baseline, the ordinal hierarchy, the finite packets, and the translation action.

The ancillary files contain checking programs, finite input data, and recorded
outputs for the packet construction, robust markers, baseline and twin-quotient
recovery, finite countable-schedule prefixes, and the Boolean decoder. The
protected-marker suite tests preservation of frozen codegrees during pool and
star insertions in infinite groups, including a countable locally finite
nonabelian example. Further checks examine finite hierarchy examples, robustness
margins, finite-prescription constraints, and the Boolean tag sets. These finite checks do not verify
the infinite schedules, transfinite recovery, or cardinal arguments.

The main further questions concern efficient recognition within restricted finite graph classes and finite-prescription abundance for the exceptional families. As shown in Proposition~\ref{prop:robust-conp}, testing $m$-robustness of an unrestricted finite graph is coNP-complete for each fixed $m\ge 1$. In the stability result it is the hierarchy, rather than the full automorphism group, that persists after the allowed edits. The closed-family theorem concerns selected compact families, not genericity of all realizing sets. The involution-free realization question of Remark~\ref{rem:orr} is a third.

\paragraph{Code and data availability.}
The checking programs, finite input data, and recorded outputs are supplied with
this article as ancillary or supplementary material. The programs are separate
from the \LaTeX{} build.

\paragraph{AI assistance in the research process.}
OpenAI's ChatGPT provided substantial assistance with developing arguments,
locating references, and preparing the checking programs described above. Anthropic's Claude, used through an automated revision pipeline under the author's direction, checked the arguments by independent line-by-line readings.
The checks evaluate finite examples, finite schedule prefixes, and exact
integer-matrix certificates; they do not constitute formal verification of
the infinite or transfinite arguments. Assistance with manuscript preparation
is disclosed before the references.

\appendix
\section{A finite coloring that realizes all inverse-pair symmetries}\label{app:finitecolor}
This appendix isolates the part of the finite-valued construction in \cite[Sections~3--6]{FC} needed by Section~\ref{sec:exceptions}. The naturality classification and the other metric constructions in that manuscript are not used. In particular, the proof below does not depend on any uncolored realization theorem proved earlier in this paper.

For an inverse-invariant coloring $c:K\setminus\{1\}\to\mathcal C$, its
\emph{complete difference-color-preserving permutation group} is
\[
\Aut_c(K)=\bigl\{F\in\Sym(K):
 c(F(x)F(y)^{-1})=c(xy^{-1})\text{ for all }x\ne y\bigr\}.
\]
Here inverse-invariance means $c(g)=c(g^{-1})$, so the color
$c(xy^{-1})$ is well defined on the unordered pair $\{x,y\}$.
The word \emph{complete} means that the condition is imposed on every pair
of distinct vertices, not only on the edges of a chosen Cayley graph.

\begin{theorem}[Finite-color realization]\label{thm:finitecolor}
For every group $K$ there is an inverse-invariant coloring
\[
c:K\setminus\{1\}\longrightarrow\{1,\ldots,17\}
\]
whose complete difference-color-preserving permutation group satisfies
$\Aut_c(K)=\Xi(K)$.
\end{theorem}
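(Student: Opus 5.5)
The plan is to reduce the global condition $\Aut_c(K)=\Xi(K)$ to finitely many \emph{local} conditions that can be satisfied with a fixed finite palette, and then to pass from finite pieces to all of $K$ by compactness. The inclusion $\Xi(K)\le\Aut_c(K)$ holds for every inverse-invariant coloring, since an element of $\Xi(K)$ preserves each unordered difference pair $\invp{xy^{-1}}$ and $c$ is constant on inverse pairs. So the whole task is the reverse inclusion. Throughout, color $1$ serves as a background color, and the remaining colors (at most $16$) are used to build rigid finite gadgets, following the construction in \cite[Sections~3--6]{FC}.

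\emph{Step 1 (reduction to short words).} First I would invoke the short-word theorem \cite[Theorem~8]{LdS} in the following form. Take a permutation $F$ of $K$ fixing $1$, and suppose that for every $x$ and every $g$ in a suitable symmetric family $\Sigma$ of elements, $F$ preserves the inverse-pair class of the differences $gx\cdot x^{-1}$ and of products of at most two or three such steps. Then $F\in\Xi(K)$. This replaces the infinite family of conditions ``$F$ preserves $\invp{h}$ for all $h$'' by conditions involving only pairs, triples and quadruples of vertices. My aim is then a coloring in which the color pattern on a bounded-size configuration around each edge $\{x,gx\}$ determines $\invp{g}$ relative to its neighbours in the configuration. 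I do not need it to determine $\invp{g}$ absolutely, since that would require infinitely many colors.

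\emph{Step 2 (finite gadgets).} Next I would well-order the inverse pairs of $K$ and assign to each pair a bounded gadget. The gadget is a finite set of further differences, colored from the $16$ non-background colors. Its shape is chosen so that any color-preserving $F$ must map a gadget onto a gadget of the same shape, at the same relative positions. Rigidity of the gadget shapes, in the spirit of the seed graph of Lemma~\ref{lem:seed}, then forces $F$ to respect the short-word relations of Step~1. The gadgets would be placed using avoidance in the spirit of Lemma~\ref{lem:BP}, so that distinct gadgets do not interact through spurious colored triangles.

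\emph{Step 3 (compactness).} Since the constraints in Step~2 are finite colored patterns on finite subsets of $K$, I would phrase the existence of $c$ as a satisfiable finite-constraint problem. Step~2 produces colorings satisfying any finite subfamily of the constraints (working inside finitely generated subgroups), and the finite-color compactness of de Bruijn--Erdős \cite{dBE} then yields a global coloring $c:K\setminus\{1\}\to\{1,\dots,17\}$ satisfying all of them. Combining this with Step~1 gives $\Aut_c(K)\le\Xi(K)$.

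The main obstacle I expect is Step~2: designing gadgets that are simultaneously rigid, bounded in size, consistent with inverse-invariance, and non-interfering in groups that have many involutions or small squares (Boolean-like and dicyclic-like pieces). I would first attempt the generic case, where avoidance gives fresh elements, and treat the involution-heavy case with a separate gadget. The count of $17$ colors comes from the palette that this construction requires.
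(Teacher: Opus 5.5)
There is a genuine gap. Step~2 carries the entire content of the theorem, and it is only a placeholder.

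A difference coloring is invariant under every right translation, so nothing can be ``placed'' at a position. The color of $h$ is seen on every pair $\{x,hx\}$. Avoidance as in Lemma~\ref{lem:BP} is therefore beside the point, and it is unavailable anyway for finite $K$, which the theorem covers.

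More seriously, with $17$ colors there are only finitely many colored types of configurations of any bounded size, and each type is realized around infinitely many differences. So bounded gadgets can at best relate how $F$ acts on neighbouring differences. Your Step~1 needs as input that $F(sx)\in\invp{s}F(x)$ holds for \emph{every} generator $s$ of a symmetric generating set, at every base point, and then for all of $S^{\le3}$. Producing that from finitely many colors requires a global rigidity mechanism, and the proposal does not supply one.

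Step~3 cannot substitute for it. $\Aut_c(K)=\Xi(K)$ is not a closed condition on $c$: for instance, by Theorem~\ref{thm:countable}, realizing connection sets of a countable group with ever larger forbidden prescriptions converge to $\varnothing$. Compactness therefore only transfers finitary constraints, and you would still have to prove that their conjunction forces rigidity. The count of $17$ colors is likewise left unexplained.

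The missing idea, used in the paper, is to make a well-order itself the rigid object.
\begin{itemize}
\item Well-order $K\setminus\{1\}$ and retain each element lying outside the subgroup generated by the earlier retained ones. This gives a symmetric generating set $S$ whose entry ranks satisfy the max rule \eqref{eq:fc-ranks}. Put alternating labels $p=\eta_r$ on the ranks.
\item Two reserved colors $16+p$ mark exactly $S$. Hence a color-preserving $F$ is an automorphism of $\Cay(K,S)$ and preserves word length.
\item Recording $p$ also on $S^2$ makes the pair-labeled well-order of Lemma~\ref{lem:fc-order} visible. Its rigidity, through the test of Lemma~\ref{lem:fc-pairs} applied to $F_x(z)=F(zx)F(x)^{-1}$ at every base point, yields $F(sx)\in\invp{s}F(x)$.
\item Iterating along chosen shortest words of length $2$ and $3$ puts $F(g_Px)F(x)^{-1}$ among the sign variations $V(P)$.
\item The sign-variation conflict graphs have outdegree at most $3$ and $7$. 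By de Bruijn--Erd\H{o}s they admit proper colorings with $7$ and $15$ colors; this is the only place compactness enters. These colorings, together with word-length preservation, force the correct inverse pair.
\item Then \eqref{eq:fc-shortword} gives $F\in\Xi(K)$.
\end{itemize}
The palette size $2+\max\{2\cdot7,15\}=17$ comes from this bookkeeping, not from any gadget design.
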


Plan of the proof: the colors mark $S$, so a color-preserving permutation preserves word length in $\Cay(K,S)$;
Lemma~\ref{lem:fc-pairs} at every base point controls the generator steps, proper colorings of the short-word
conflicts extend this control to words of length at most three, and \eqref{eq:fc-shortword} passes from these
words to $\Xi(K)$.

\subsection{Ranks and recovery of generator inverse pairs}
The trivial group is immediate. Otherwise, we first encode a well-order into generator inverse pairs. Well-order $K\setminus\{1\}$ and retain an element only when it is outside the subgroup generated by the earlier retained elements. Write the retained sequence as $(a_\alpha)_{\alpha<\tau}$ and put
\[
K_\alpha=\langle a_\beta:\beta<\alpha\rangle,\qquad
\mathcal A=\{a_\alpha:\alpha<\tau\},\qquad
S=\mathcal A\cup\mathcal A^{-1},\qquad B_\alpha=\invp{a_\alpha}.
\]
Here $\tau$ is an ordinal, the $B_\alpha$ are disjoint, $a_\alpha\notin K_\alpha$, and $K_\tau=K$. Continuity holds at limits because words are finite. The entry rank of a nonidentity element records the first stage at which it appears: for $g\ne1$, define $r(g)$ by
\[
g\in K_{r(g)+1}\setminus K_{r(g)}.
\]
Subgroup closure gives $r(g^{-1})=r(g)$, and
\begin{equation}\label{eq:fc-ranks}
r(uv^{-1})=\max\{r(u),r(v)\}\qquad\text{when }r(u)\ne r(v).
\end{equation}
Indeed, if $r(u)<r(v)=\beta$, then $u\in K_\beta$ and $v\notin K_\beta$, so $uv^{-1}\notin K_\beta$ but belongs to $K_{\beta+1}$. This rank rule is what lets short differences reveal the order of the retained generators.
Choose labels $\eta_0=0$, $\eta_{\alpha+1}=1-\eta_\alpha$, and $\eta_\lambda=0$ at nonzero limits, and put $p(g)=\eta_{r(g)}$.

\begin{lemma}[A two-labeled well-order]\label{lem:fc-order}
Give each index $\alpha<\tau$ vertex label $\eta_\alpha$ and each unordered pair $\{\alpha,\beta\}$ label $\eta_{\max\{\alpha,\beta\}}$. The only permutation preserving both kinds of labels is the identity.
\end{lemma}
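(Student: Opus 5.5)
We argue by induction along the well-order $\tau$, showing that a permutation $\pi$ of $\tau$ preserving both kinds of labels fixes every index. The key observation is that the pair labels determine a "local maximum" structure: for fixed $\alpha$, the map $\beta\mapsto\eta_{\max\{\alpha,\beta\}}$ equals the constant $\eta_\alpha$ for $\beta<\alpha$ and equals $\eta_\beta$ for $\beta>\alpha$. So we first extract, from labels alone, a characterization of the order relation (or at least of $\alpha<\beta$ in enough cases) that $\pi$ must preserve.

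Concretely, we show that $\pi$ preserves the strict order. For $\alpha<\beta$ with $\eta_\alpha\ne\eta_\beta$, the pair label $\eta_\beta$ equals the vertex label of $\beta$ and differs from that of $\alpha$, so the pair label alone tells which endpoint is larger. For $\alpha<\beta$ with equal labels, we look for a distinguishing third index. If $\beta=\alpha+1$ the labels differ, so we may assume $\alpha+1<\beta$. Then $\gamma=\alpha+1$ has $\eta_\gamma\ne\eta_\alpha=\eta_\beta$, and the pair labels compare as follows: $\{\alpha,\gamma\}$ gets $\eta_\gamma\ne\eta_\alpha$, while $\{\beta,\gamma\}$ gets $\eta_\beta$. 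Hence $\gamma$ is "above" $\alpha$ and "below" $\beta$ as detected by the previous case. This yields a label-definable formula: $\alpha<\beta$ holds iff either the labels differ and the pair label is that of $\beta$, or the labels agree and some $\gamma$ with label different from both has pair label $\eta_\gamma$ with $\alpha$ and $\eta_\beta$ with $\beta$.

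The step I expect to need the most care is the converse direction of this formula in the equal-label case. We must check that when $\beta<\alpha$, no such $\gamma$ exists. Such a $\gamma$ would satisfy $\gamma>\alpha$, from its pair with $\alpha$ having its own label differing from $\eta_\alpha$. It would also satisfy $\gamma<\beta$, from its pair with $\beta$ having label $\eta_\beta\ne\eta_\gamma$. That gives $\beta>\gamma>\alpha$, a contradiction. Since the formula uses only labels, $\pi$ preserves $<$ and is therefore an order automorphism of the ordinal $\tau$.

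An order automorphism of a well-order is the identity: if $\alpha$ is least with $\pi(\alpha)\ne\alpha$, then $\pi(\alpha)>\alpha$ would leave $\alpha$ outside the image of $[\alpha,\tau)$, and $\pi(\alpha)<\alpha$ would contradict injectivity below $\alpha$. This completes the plan. The limit labels $\eta_\lambda=0$ play no special role beyond making $\eta$ well defined.
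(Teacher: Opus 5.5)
Your proposal is correct and follows essentially the same route as the paper's proof. Both read off the order for opposite-label pairs directly from the pair label, and both handle equal-label pairs via an opposite-label intermediate $\gamma$, taking $\gamma=\alpha+1$ for the forward direction. Both then conclude from the rigidity of well-orders; you spell out the converse check and that rigidity argument explicitly, where the paper leaves them implicit.
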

\begin{proof}
For opposite-label vertices, the pair label identifies the larger vertex. We must also recover the order between vertices carrying the same label. For same-label distinct vertices, $\alpha<\beta$ holds exactly when there is an opposite-label $\gamma$ with $\alpha<\gamma<\beta$. The forward implication uses $\gamma=\alpha+1$: equal endpoint labels imply $\alpha+1<\beta$. Both comparisons with $\gamma$ concern opposite labels and have already been recovered. Every order comparison is therefore determined, and a well-order has no nonidentity order automorphism.
\end{proof}

\begin{lemma}[Generator-pair recovery]\label{lem:fc-pairs}
Suppose an inverse-invariant coloring $C$ recognizes membership in $S$, and its colors recover $p(g)$ for $g\in(S\cup S^2)\setminus\{1\}$. Every permutation preserving the complete $C$-colored graph and fixing $1$ preserves each $B_\alpha$ setwise.
\end{lemma}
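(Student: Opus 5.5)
Let $F$ be a permutation of $K$ that fixes $1$ and preserves the complete $C$-colored graph. The strategy is to show that $F$ preserves the entry ranks of generators and then read off their order with Lemma~\ref{lem:fc-order}. Since $C$ recognizes membership in $S$ and $F(1)=1$, the pair $\{1,g\}$ has color $C(g)$ and its image is $\{1,F(g)\}$. Hence $F(S)=S$ and $C(F(g))=C(g)$ for every $g\ne1$. The colors recover $p$ on $S\cup S^2$, so $p(F(s))=p(s)$ for every $s\in S$. Consider the map $\pi$ on indices sending $\alpha$ to the rank of $F(a_\alpha)$. First I need $\pi$ to be well defined on blocks and to give a permutation of $\tau$. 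Then I will show that it preserves the vertex labels $\eta_\alpha$ and the pair labels $\eta_{\max\{\alpha,\beta\}}$.

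\emph{Pair labels.} Take $s,s'\in S$ with $r(s)\ne r(s')$. The difference $s(s')^{-1}$ has rank $\max\{r(s),r(s')\}$ by \eqref{eq:fc-ranks}. It is also a short word: since $S=S^{-1}$, we have $s(s')^{-1}\in S^2$. Its $p$-value is therefore recovered from the color $C(s(s')^{-1})$. Now $F(s)F(s')^{-1}$ has the same color, because the pair $\{s,s'\}$ maps to $\{F(s),F(s')\}$. When $F(s)$ and $F(s')$ have different ranks, this gives $p$ of the maximum of those ranks. So the labels of the larger indices agree, provided we know that ranks stay distinct. The vertex labels are preserved by $p(F(s))=p(s)$.

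\emph{Block preservation.} The expected main obstacle is to show that $F$ maps each block $B_\alpha$ into a single block, with distinct blocks going to distinct blocks. Note that $B_\alpha=\{a,a^{-1}\}$ with $a\cdot a=a^2\in S^2$. I would compare $a$ and $a^{-1}$ through the pair $\{a,a^{-1}\}$, whose difference is $a^2$. Its rank is $\alpha$, since $a\in K_{\alpha+1}\setminus K_\alpha$. If $F(a)$ and $F(a^{-1})$ had different ranks $\beta\ne\beta'$, then \eqref{eq:fc-ranks} would put $F(a)F(a^{-1})^{-1}$ at rank $\max\{\beta,\beta'\}$. This alone is not contradictory, so a cleaner route is needed. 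I would first prove that $F$ preserves the relation ``same rank'' on $S$, using Lemma~\ref{lem:fc-order} as a template. Define a two-labeled structure on $S$ in which $s,s'$ of different ranks carry the pair label $p(s(s')^{-1})$, while same-rank pairs have difference of rank at most that rank. The goal is to show that same-rank pairs are recognizable as exactly those pairs $\{s,s'\}$ for which no opposite-labeled generator sits strictly between them. This mirrors the argument of Lemma~\ref{lem:fc-order}. If this recognition works, same-rank classes go to same-rank classes. Each class is exactly one $B_\alpha$ of size at most two, so $F$ permutes the blocks. Then $\pi$ is a permutation preserving both kinds of labels.

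Lemma~\ref{lem:fc-order} now forces $\pi=\mathrm{id}$, so $F(B_\alpha)=B_\alpha$ for every $\alpha$. Where the recognition of same-rank pairs fails in degenerate cases, such as $B_\alpha$ being a singleton for an involution, or pairs near limit ordinals, I would treat them by hand. For an involution the block is a single point, and injectivity together with the count of points per block settles it.
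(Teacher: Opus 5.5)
Your overall architecture matches the paper's: $F$ preserves $S$ and $p$ on $S$, the blocks $B_\alpha$ are recovered intrinsically, and Lemma~\ref{lem:fc-order} then fixes the induced permutation of indices. There is a gap, however, at the step that carries all the content. The block recognition is left as a conditional goal (``if this recognition works''). Moreover, the criterion you state, ``no opposite-labeled generator strictly between them,'' is not yet phrased in terms of data that $F$ is known to preserve. ``Between'' refers to the rank order you are trying to recover, so as written the step is circular.

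What is needed is an explicit $F$-invariant test. For $u,v\in S$, declare $u\approx v$ iff $u=v$, or $p(u)=p(v)$ and $p(uw^{-1})=p(vw^{-1})$ for all $w\in S\setminus\{u,v\}$. Every tested difference is a nonidentity element of $S^2$. Since $F(S)=S$, the same holds for the image differences. The hypothesis that the colors recover $p$ on $(S\cup S^2)\setminus\{1\}$ therefore gives $u\approx v\iff F(u)\approx F(v)$, so $F$ permutes the classes of $\approx$.

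You must then check that these classes are exactly the $B_\alpha$.
\begin{itemize}
\item If $u,v\in B_\alpha$, then $r(w)\ne\alpha$ for every other $w\in S$, and \eqref{eq:fc-ranks} gives $r(uw^{-1})=r(vw^{-1})=\max\{\alpha,r(w)\}$.
\item Let $u\in B_\alpha$ and $v\in B_\beta$ with $\alpha<\beta$ and $p(u)=p(v)$. Then $\beta\ne\alpha+1$, because $\eta_{\alpha+1}\ne\eta_\alpha=\eta_\beta$. So $w=a_{\alpha+1}$ exists and gives $p(uw^{-1})=\eta_{\alpha+1}\ne\eta_\beta=p(vw^{-1})$.
\end{itemize}
This is precisely your ``opposite-labeled generator in between,'' made intrinsic, and it is the paper's test.

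Drop the fallback for ``degenerate cases'': there are none. Singleton blocks of involutions and indices below limit ordinals are handled by the same witness $a_{\alpha+1}$. What guarantees $\alpha+1<\beta$ is label alternation, not whether $\beta$ is a successor. The involution argument via injectivity and counting points per block is also not valid as stated. Knowing that a block has one point does not tell you which block $F$ sends it to unless block recognition is already in hand.

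Once the classes are recognized, your pair-label paragraph supplies the cross-pair labels and the application of Lemma~\ref{lem:fc-order} goes through as you describe.
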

\begin{proof}
Such a permutation preserves $S$ and its radial labels $p$. To recover the individual generator inverse pairs among them, for $u,v\in S$ use the test
\[
u\sim v\ \Longleftrightarrow\ u=v\ \text{ or }\ 
\left\{\begin{array}{l}
p(u)=p(v),\\
p(uw^{-1})=p(vw^{-1})\quad\text{for all }w\in S\setminus\{u,v\}.
\end{array}\right.
\]
Every difference tested is nonidentity and has word length at most two, both before and after applying the permutation, so the test is preserved. Its classes are exactly the $B_\alpha$. Elements in the same $B_\alpha$ pass by \eqref{eq:fc-ranks}. If $u\in B_\alpha$, $v\in B_\beta$, and $\alpha<\beta$, different radial labels already separate them. If the radial labels agree, $\gamma=\alpha+1<\beta$ and $w=a_\gamma$ give pair labels $\eta_\gamma\ne\eta_\beta$, separating them instead. On the recovered classes, the vertex and cross-pair labels are those of Lemma~\ref{lem:fc-order}, so that lemma fixes each class setwise.
\end{proof}

\subsection{Finite conflict colorings}
Let $\ell$ be word length for $S$, and let $\mathcal P_n=\{\invp{g}:\ell(g)=n\}$. For each $P\in\mathcal P_n$, choose a representative and a shortest word
\[
g_P=s_{P,1}\cdots s_{P,n},\qquad s_{P,i}\in S,
\]
and set
\[
V(P)=\{s_{P,1}^{\delta_1}\cdots s_{P,n}^{\delta_n}:\delta_i\in\{1,-1\}\}.
\]
There are at most $2^n$ resulting elements. We color the inverse pairs so that no competing sign variation can share a color. Draw an arc $P\to Q$ when $Q\ne P$ and $Q=\invp{v}$ for some $v\in V(P)$ with $\ell(v)=n$. The outdegree is at most $2^n-1$.

A graph obtained by forgetting directions of a directed graph with outdegree at most $d$ has a proper $(2d+1)$-coloring; this is the bounded set-mapping consequence of de Bruijn--Erd\H{o}s \cite[Theorems~1 and~3]{dBE}. For completeness, here is the compactness argument that supplies the coloring. Every finite induced subgraph on $F$ has at most $d|F|$ edges, hence a vertex of degree at most $2d$. Removing such vertices and coloring in reverse order gives a proper $(2d+1)$-coloring of every finite subgraph. In the compact product $\{1,\ldots,2d+1\}^{\mathcal P_n}$, each edge constraint is closed and any finite set of constraints is satisfiable. Compactness then supplies the full coloring.

Choose such colorings
\[
\chi_2:\mathcal P_2\to\{1,\ldots,7\},\qquad
\chi_3:\mathcal P_3\to\{1,\ldots,15\}.
\]
Only distinct inverse pairs of the specified length create edges, so coincident sign variations, variations equal to $1$, and shortened words cause no difficulty.

\subsection{The published short-word input and the finite palette}
For a symmetric set $T$, write
\[
\Xi_{K,T}=\{F\in\Sym(K):F(tx)\in\invp{t}F(x)\quad\text{for all }t\in T,\ x\in K\}.
\]
The short-word theorem of Leemann--de la Salle \cite[Theorem~8]{LdS} gives, for every group and every symmetric generating set,
\begin{equation}\label{eq:fc-shortword}
\Xi_{K,S^{\le3}}=\Xi(K),\qquad S^{\le3}=S\cup S^2\cup S^3.
\end{equation}
The theorem applies without a finite-generation assumption. We only need to translate its convention to the right-Cayley convention used here. Let $J(x)=x^{-1}$. If $F(tx)\in\invp{t}F(x)$, then
\[
(JFJ)(ys)=F(s^{-1}y^{-1})^{-1}\in(JFJ)(y)\invp{s},
\]
which is the cited left-translation convention. Conjugation by $J$ therefore gives \eqref{eq:fc-shortword} in the present convention. This is the only published rigidity input needed below; no uncolored realization is used here.

\begin{proof}[Proof of Theorem~\ref{thm:finitecolor}]
Define the following colors for $g\ne1$:
\[
C(g)=
\begin{cases}
16+p(g),&\ell(g)=1,\\
\chi_2(\invp{g})+7p(g),&\ell(g)=2,\\
\chi_3(\invp{g}),&\ell(g)=3,\\
1,&\ell(g)>3.
\end{cases}
\]
The colors are inverse-invariant. The colors $16$ and $17$ occur precisely on $S$. At length one the color recovers $p$; at length two its value lies in $\{1,\ldots,14\}$ and recovers both $p$ and $\chi_2$. The number of colors is at most
\[
2+\max\{2\cdot7,15\}=17.
\]
Let $F$ preserve every complete difference color. Since the colors $16$ and $17$ mark $S$, $F$ is an automorphism of $\Cay(K,S)$ and therefore preserves its word metric:
\[
\ell(F(x)F(y)^{-1})=\ell(xy^{-1})\qquad\text{for all }x,y\in K.
\] We first need generator-pair control at every base point, not only at the identity. For every base point $x$, the normalized permutation
\[
F_x(z)=F(zx)F(x)^{-1}
\]
also preserves these colors and fixes $1$. The colors recognize $S$ and recover $p$ on $(S\cup S^2)\setminus\{1\}$, so Lemma~\ref{lem:fc-pairs} applies at that base point and gives
\begin{equation}\label{eq:fc-step}
F(sx)\in\invp{s}F(x)\qquad\text{for all }s\in S,\ x\in K.
\end{equation}
The remaining task is to upgrade this generator-step information to all words of length at most three. For a chosen representative $g_P$ of length $n\in\{2,3\}$, iterate \eqref{eq:fc-step} from the rightmost letter to obtain
\[
u=F(g_Px)F(x)^{-1}\in V(P).
\]
Word-metric preservation gives $\ell(u)=\ell(g_P)=n$, and color preservation gives $C(u)=C(g_P)$. At length $n$ the color recovers $\chi_n$, so $\chi_n(\invp{u})=\chi_n(P)$. If $\invp{u}\ne P$, the chosen sign variation creates a conflict edge, contradicting properness of $\chi_n$; hence $\invp{u}=P$.
To obtain the inverse orientation as well, apply the assertion just proved for $g_P$ at the base point $g_P^{-1}x$. This gives
\[
F(x)F(g_P^{-1}x)^{-1}\in\invp{g_P}.
\]
Taking inverses yields
\[
F(g_P^{-1}x)F(x)^{-1}\in\invp{g_P}=\invp{g_P^{-1}},
\]
as required. Together with \eqref{eq:fc-step}, this shows $F\in\Xi_{K,S^{\le3}}$. Equation~\eqref{eq:fc-shortword} then implies $F\in\Xi(K)$. Conversely, every member of $\Xi(K)$ preserves any inverse-invariant coloring of differences.
\end{proof}
\section*{Declaration of generative AI and AI-assisted technologies in the manuscript preparation process}
OpenAI's ChatGPT provided substantial assistance with drafting and revising
this manuscript, locating references, and preparing and checking the submission
files. Its assistance with arguments and computational checks is described in
Section~\ref{sec:checks}. Anthropic's Claude, used through an automated revision pipeline under the author's direction, drafted the restructured presentation of this version (the construction lemma of Section~4, the numbered decoder lemmas of Section~5, and the proofs of Theorem~A and Corollary~6.3) and prepared the source files; its checking of the arguments is described in Section~\ref{sec:checks}. Responsibility for the mathematical content, source
verification, and approval of the final version rests with the author. No AI system is an author of this paper.

\end{document}